\def\eop{\hfill\rule{2.0mm}{2.0mm}}
\newcommand{\diag}{\mathrm{diag}}
\newtheorem{lemma}{Lemma}
\newtheorem{cor}[lemma]{Corollary}
\newtheorem{theorem}[lemma]{Theorem}
\newtheorem{example}{Example}
\newtheorem{algorithm}{Algorithm}
\newproof{proof}{Proof}
\begin{document}

\title{Construction of Symmetric Complex Tight $\df$-wavelet Frames from Pseudo Splines via Matrix Extension with Symmetry}

\author[rvt]{Xiaosheng Zhuang\corref{cor1}\fnref{fn1}}
\ead{xzhuang@math.ualberta.ca}
\address[rvt]{Department of Mathematical and Statistical Sciences,
University of Alberta, Edmonton, Alberta, Canada T6G 2G1. }

\cortext[cor1]{Corresponding author, {\tt
http://www.ualberta.ca/$\sim$xzhuang}}

\fntext[fn1]{Research was supported in part by NSERC Canada under
Grant RGP 228051.}

\makeatletter \@addtoreset{equation}{section} \makeatother

\begin{abstract}
\end{abstract}

\begin{keyword}
complex wavelets\sep pseudo spline\sep symmetry\sep framelets\sep
 vanishing moments
\MSC[2000]{42C40, 41A05, 42C15, 65T60}
\end{keyword}


\maketitle
\pagenumbering{arabic}

\section{Introduction and Motivation}
Redundant wavelet systems have been proved to be quit useful in many
applications, for examples, signal denoising, image processing,
numerical algorithm, and so on. As a redundant system, it can
possess many desirable properties such as symmetry, short support,
high vanishing moments, and so on, simultaneously. In this paper, we
are interested in the construction of tight wavelet frames with such
desirable properties. Other than redundancy, computational
efficiency is also desired in applications, which means a wavelet
system should have as few generators as possible. It is well known
that the construction of a wavelet system from a refinable function
can be formulated as a matrix extension problem.  In order to have
fewer generators for a tight wavelet frame, we shall employ the
algorithm for matrix extension with symmetry.

Let us first introduce some notation and definitions. We say that
$\df$ is a \emph{dilation factor} if $\df>1$ is an integer.
Throughout this paper, $\df$ is fixed and denotes a dilation factor.
We say that $\phi:\R\mapsto \C$ is a \emph{$\df$-refinable function}
if
\begin{equation}\label{refeq}
\phi=\df\sum_{k\in\Z} a_0(k)\phi(\df \cdot-k),
\end{equation}
where $a_0:\Z\mapsto \C$ is a finitely supported sequence on $\Z$,
called the \emph{low-pass filter} for $\phi$. The \emph{symbol} of
$a_0$ is  given by $\pa_0(z):=\sum_{k\in\Z}a_0(k)z^k$. The
\emph{coefficient support} of $a_0$ is defined by $\cs(a_0):=[m,n]$,
where $a_0(m)a_0(n)\neq0$ and $a_0(k)=0$ for all $k\notin[m,n]$. In
frequency domain, the refinement equation in \eqref{refeq} can be
rewritten as
\begin{equation}\label{refeq:freq}
\hat\phi(\df \xi) = \wh{a_0}(\xi)\hat\phi(\xi), \quad \xi\in\R,
\end{equation}
where $\wh{a_0}$ is the \emph{Fourier series} of $a_0$ given by
\begin{equation}\label{Fseq}
\wh{a_0}(\xi):=\sum_{k\in\Z}a_0(k)e^{-\iu k\xi}=\pa_0(e^{-i\xi}),
\quad \xi\in\R.
\end{equation}
The Fourier transform $\hat{f}$ of $f\in L_1(\R)$ is defined to be
$\hat{f}(\xi)=\int_{\R} f(t) e^{-\iu t\xi} dt$ and can be extended
to square integrable functions and tempered distributions. A
low-pass filter $a_0$ is \emph{orthogonal} if
$\sum_{j=0}^{\df-1}|\wh{a_0}(\xi+2\pi j/\df)|^2 = 1$ for all
$\xi\in\R$.

Usually, a wavelet system is generated by some wavelet function
$\psi^\ell, \ell=1,\ldots, L$ from a $\df$-refinable function $\phi$
as follows:
\begin{equation}\label{waveletGenerators}
\wh{\psi^\ell}(\df\xi) =\wh a_\ell(\xi)\hat\phi(\xi), \quad
\ell=1,\ldots,L,
\end{equation}
where each $a_\ell:\Z\mapsto\C$ is a finitely supported sequence on
$\Z$, called the \emph{high-pass filter} for $\psi^\ell$, $
\ell=1,\ldots, L$.

We says that $\{\psi^1,\ldots,\psi^L\}$ generates a
\emph{$\df$-wavelet frame} in $L_2(\R)$ if
$\{\psi^\ell_{j,k}:=\df^{j/2}\psi^\ell(\df^j\cdot-k)\,:\, j,k\in\Z,
\ell=1,\ldots,L\}$ is a frame in $L_2(\R)$, that is, there exists
two positive constants $C_1, C_2$ such that
\begin{equation}
\label{def:frame} C_1\|f\|_{L_2(\R)}^2\le
\sum_{\ell=1}^L\sum_{j\in\Z}\sum_{k\in\Z} |\la
f,\psi^\ell_{j,k}\ra|^2\le C_2\|f\|_{L_2(\R)}^2\quad \forall f\in
L_2(\R),
\end{equation}
where $|\la f,\psi^\ell_{j,k}\ra|^2=\la f,\psi^\ell_{j,k}\ra\la
\psi^\ell_{j,k},f\ra$ and $\la\cdot,\cdot\ra$ is the inner product
in $L_2(\R)$ defined by
\[
\la f,g\ra=\int_\R f(t)\overline{g(t)}dt, \quad f,g\in L_2(\R).
\]
If $C_1=C_2=1$ in \eqref{def:frame}, we say that
$\{\psi^1,\ldots,\psi^L\}$ generates a \emph{tight $\df$-wavelet
frame} in $L_2(\R)$.

An important property of a wavelet system is its order of vanishing
moments. We say that $\{\psi^1,\ldots,\psi^L\}$ has \emph{vanishing
moments} of order $n$ if
\begin{equation}\label{def:vanishingMoment}
\int_\R t^k\psi^\ell(t)dt=0\qquad k=0,\ldots,n-1, \forall
\ell=1,\ldots,L.
\end{equation}
\eqref{def:vanishingMoment} is equivalent to saying that
$\frac{d^k}{dt^k}\wh\psi^\ell(0)=0$ for all $k=0,\ldots,n-1$ and
$\ell=1,\ldots,L$.

Let  $\phi$ be a compactly supported $\df$-refinable function in
$L_2(\R)$ associated with a low-pass filter $a_0$ such that
$\hat\phi(0)=1$. Suppose there exist high-pass filters
$a_1,\ldots,a_L$ such that
\begin{equation}\label{UEP}
\sum_{\ell = 0}^L\wh{a_\ell}\overline{\wh{a_\ell}(\cdot+2\pi
k/\df)}=\delta_k, \quad k=0,\ldots, \df-1,
\end{equation}
where $\delta$ is the \emph{Dirac sequence} such that $\delta(0)=1$
and $\delta(k) = 0 $ for all $k\neq 0$. Define $\psi^\ell, \ell =
1,\ldots,L$ as in \eqref{waveletGenerators}. Then
$\{\psi^1,\ldots,\psi^L\}$ generates a tight $\df$-wavelet frame,
see \cite{Shen, RonShen1}. Moreover, if the low-pass filter $a_0$
satisfies
\begin{equation}\label{eq:lowerpassat0}
 1-|\wh{a_0}(\xi)|^2 = O(|\xi|^{2n}),
\end{equation}
which means $1-|\wh{a_0}(\xi)|^2$ has \emph{zero of order} $2n$ near
the origin, then $\{\psi^1,\ldots,\psi^L\}$ given by
\eqref{waveletGenerators} has vanishing moments of order $n$, see
\cite{DaubHanRonShen}.

Now, we are ready to introduce our main result of this paper on the
construction of symmetric complex tight $\df$-wavelet frame.
Throughout this paper, we shall denote $P_{m,n}(y)$ a polynomial of
degree $n-1$ as follows:
\begin{equation}\label{def:poly}
P_{m,n}(y)=
\sum_{j=0}^{n-1}\left[\sum_{j_1+\cdots+j_{\df-1}=j}\prod_{k=1}^{\df-1}{m-1+j_k
\choose j_k} \sin(k\pi/\df)^{-2j_k}\right]y^j.
\end{equation}
\begin{theorem}
\label{thm:main:1} Let $\df$ be a dilation factor. Let $m, n\in\N$
be positive integers such that $2n-1\le m$. Let $P_{m,n}(y)$ be the
polynomial defined in \eqref{def:poly}. Then
\begin{equation}\label{eq:polyGreater0}
P_{m,2n-1}(y)>0 \qquad \forall y\in\R.
\end{equation}
Let
$z_1,\overline{z_1},\ldots,z_{n-1},\overline{z_{n-1}}\in\C\setminus\R$
be all the complex roots of $P_{m,2n-1}$. Then
\begin{equation}\label{eq:Qmn}
P_{m,2n-1}(y) = |Q_{m,n}(y)|^2,
\end{equation}
where $Q_{m,n}(y) = c(y-z_1)\cdots(y-z_{n-1})$ with
$c=(-1)^{n-1}(z_1\cdots z_{n-1})^{-1}$. Define a low-pass filter
$a_0$ by
\begin{equation}\label{def:Amn}
\wh{a_0}(\xi):= e^{\iu\lfloor \frac{m(\df-1)}{2}\rfloor\xi}
\left(\frac{1+e^{-\iu \xi}+\cdots+e^{-\iu (\df-1)\xi}}{\df}\right)^m
Q_{m,n}(\sin^2(\xi/2)),
\end{equation}
where $\lfloor \cdot\rfloor$ is the floor operation. Then
\begin{equation}\label{eq:symAmn}
\wh{a_0}(-\xi)= e^{\iu (1-\gep)\xi} \wh{a_0}(\xi)\quad \mbox
{with}\quad  \gep = m(\df-1)-2\lfloor \frac{m(\df-1)}{2}\rfloor,
\end{equation}
and
\[
\cs(a_0) = [-\lfloor \frac{m(\df-1)}{2}\rfloor-n+1,\lfloor
\frac{m(\df-1)}{2}\rfloor+n-1+\gep].
\]
Let $\phi$ be the standard $\df$-refinable function associated with
the low-pass filter $a_0$, that is,
$\wh{\phi}(\xi):=\prod_{j=1}^\infty\wh{a_0}(\df^{-j} \xi)$. Then
$\phi$ is a compactly supported $\df$-refinable function in
$L_2(\R)$ with symmetry $\phi(\frac{1-\gep}{\df-1}-\cdot)=\phi$.
Moreover, one can construct high-pass filters $a_1, \ldots, a_{L}$,
$L\in\{\df-1,\df, \df+1\}$, with symmetry by Algorithm \ref{alg:2}
such that \eqref{UEP} holds. Define $\psi^1,\ldots,\psi^L$ as in
\eqref{waveletGenerators}. Then $\{\psi^1,\ldots,\psi^L\}$ generates
a tight $\df$-wavelet frame in $L_2(\R)$ and
$\{\psi^1,\ldots,\psi^L\}$ has vanishing moments of order $2n-1$.
Furthermore, $|\cs(a_\ell)|\le |\cs(a_0)|$  and
$\psi^\ell(\frac{1-\gep}{\df-1}+\eta^\ell-\cdot)=\gep^\ell\psi^\ell$
for some $\eta^\ell\in\{1,0,-1\}$ and $\gep^\ell\in\{-1,1\}$, $\ell
= 1,\ldots,L$. In particular, for $m=2n-1$, $L=\df-1$ and
$\{\psi^1,\ldots,\psi^{\df-1}\}$ generates an orthonormal wavelet
basis with symmetry for $L_2(\R)$.
\end{theorem}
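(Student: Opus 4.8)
The plan is to reduce the entire statement to two analytic facts about $P_{m,2n-1}$ plus one application of the matrix-extension-with-symmetry algorithm (Algorithm \ref{alg:2}). The starting point is a generating-function reading of \eqref{def:poly}: since $\sum_{j\ge0}\binom{m-1+j}{j}x^j=(1-x)^{-m}$, the coefficient of $y^j$ in \eqref{def:poly} is exactly the $y^j$-coefficient of $\prod_{k=1}^{\df-1}(1-y/s_k)^{-m}$, where $s_k:=\sin^2(k\pi/\df)$. Hence $P_{m,n}$ is the Taylor polynomial of degree $n-1$ of $\bigl(\prod_{k=1}^{\df-1}(1-y/s_k)\bigr)^{-m}$. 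Combining this with the Dirichlet-kernel identity $\bigl(\tfrac{\sin(\df\xi/2)}{\df\sin(\xi/2)}\bigr)^2=\prod_{k=1}^{\df-1}\bigl(1-\sin^2(\xi/2)/s_k\bigr)$ gives, with $y=\sin^2(\xi/2)$ and $G(y):=\bigl(\prod_k(1-y/s_k)\bigr)^m$, the clean identity $|\wh{a_0}(\xi)|^2=G(y)\,P_{m,2n-1}(y)$, which governs everything that follows.

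First I would prove \eqref{eq:polyGreater0}. Every coefficient in \eqref{def:poly} is positive, so $P_{m,2n-1}(y)>0$ for $y\ge0$; the content is ruling out negative real roots, and this is where $2n-1\le m$ enters. Writing $G(y)P_{m,2n-1}(y)=1-G(y)r(y)$, where $r(y)=\sum_{j\ge 2n-1}c_jy^j$ with $c_j>0$ is the tail of the reciprocal series, one controls the sign of $P_{m,2n-1}$ on $(-\infty,0)$ by induction on $n$ (or via an integral representation of the truncation remainder); the two structural features that preclude negative roots are that the truncation degree $2n-2$ is even and strictly below the multiplicity $m$ of each zero of $G$, i.e.\ $2n-2<m$. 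Granting positivity, $P_{m,2n-1}$ has real coefficients and no real zeros, so its roots split into the conjugate pairs $z_k,\overline{z_k}$; the Fej\'er--Riesz-type identity $P_{m,2n-1}(y)=|c|^2\prod_k|y-z_k|^2$ together with $P_{m,2n-1}(0)=1$ fixes $|c|$, yields \eqref{eq:Qmn} with $Q_{m,n}(0)=1$, and hence $\wh{a_0}(0)=1$.

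The symmetry \eqref{eq:symAmn} and the support formula then follow factor by factor: $\bigl(\tfrac1\df\sum_{j=0}^{\df-1}e^{-\iu j\xi}\bigr)^m$ is, up to the modulation $e^{\iu\lfloor m(\df-1)/2\rfloor\xi}$, symmetric with coefficient support $[-\lfloor m(\df-1)/2\rfloor,\lfloor m(\df-1)/2\rfloor+\gep]$, while $Q_{m,n}(\sin^2(\xi/2))$ is even in $\xi$ with support $[-(n-1),n-1]$; combining the two reproduces $\cs(a_0)$ and \eqref{eq:symAmn}, and passing to the infinite product $\wh\phi(\xi)=\prod_{j\ge1}\wh{a_0}(\df^{-j}\xi)$ (using $\sum_{j\ge1}\df^{-j}=1/(\df-1)$) transfers the symmetry to $\phi$. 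Compact support of $\phi$ is immediate from finiteness of $\cs(a_0)$; membership $\phi\in L_2(\R)$ I would obtain by identifying $a_0$ as a complex symmetric pseudo-spline filter and invoking the known stability and $L_2$ results for such filters.

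For the framelets, the decisive step is the sub-QMF inequality $\sum_{j=0}^{\df-1}|\wh{a_0}(\xi+2\pi j/\df)|^2\le1$, which is exactly the condition that the rank-one matrix built from the $\df$ modulations of $a_0$ be dominated by the identity, and hence the precise hypothesis under which Algorithm \ref{alg:2} applies. Since $|\sin(\df(\xi+2\pi j/\df)/2)|$ is independent of $j$, this reduces to a single estimate on a $j$-sum of $P_{m,2n-1}$ evaluated at the shifted nodes weighted by the corresponding powers of $\sin^{-2}$; I expect this inequality, rather than the matrix extension itself, to be the main obstacle, and I would treat it by the same generating-function bookkeeping used for positivity. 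Once it holds, Algorithm \ref{alg:2} returns symmetric high-pass filters $a_1,\dots,a_L$, $L\in\{\df-1,\df,\df+1\}$, satisfying \eqref{UEP} with $|\cs(a_\ell)|\le|\cs(a_0)|$ and the stated filter symmetries, whence $\{\psi^1,\dots,\psi^L\}$ is a tight $\df$-wavelet frame by the cited unitary-extension principle, and the symmetries of the $a_\ell$ pass to the $\psi^\ell$. The vanishing-moment order is read off from $1-|\wh{a_0}(\xi)|^2=G(y)r(y)=O(y^{2n-1})=O(|\xi|^{2(2n-1)})$ via \eqref{eq:lowerpassat0}. Finally, for $m=2n-1$ the truncation is complete in the sense that $G\,P_{2n-1,2n-1}$ summed over its $\df$ modulations equals $1$ identically---a Daubechies-type partition-of-unity identity---so the sub-QMF inequality becomes an equality, the defect matrix has rank $\df-1$, and Algorithm \ref{alg:2} produces exactly $\df-1$ filters generating an orthonormal wavelet basis with symmetry.
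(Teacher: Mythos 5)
Your architecture matches the paper's (positivity of $P_{m,2n-1}$, Fej\'er--Riesz splitting into $|Q_{m,n}|^2$, symmetry/support bookkeeping, a sub-QMF inequality, then matrix extension with symmetry and the unitary extension principle), but the two steps that carry all the analytic weight are left as assertions, and in both cases the mechanism you point to is not the one that actually works. For positivity on $(-\infty,0)$ you say the obstruction is removed because ``the truncation degree $2n-2$ is even and strictly below the multiplicity $m$,'' to be handled ``by induction on $n$ or via an integral representation.'' The paper's proof is quite different and does not use $2n-1\le m$ at this stage at all: it establishes the derivative identity of Lemma~\ref{lemma:derivativePoly},
\[
P_{m,n}'(y)=m\sum_{\ell=1}^{\df-1}\frac{1}{1-y/\sin^2(\ell\pi/\df)}\Bigl[\frac{P_{m,n}(y)}{\sin^2(\ell\pi/\df)}-c^\ell_{n-1}y^{n-1}\Bigr],
\]
and observes that at any stationary point $y_0<0$ the factors $1-y_0/\sin^2(\ell\pi/\df)$ and $c^\ell_{n-1}y_0^{n-1}$ are positive when $n$ is odd, forcing $P_{m,n}(y_0)>0$; combined with $P_{m,n}(0)=1$ and $P_{m,n}(y)\to+\infty$ this gives positivity. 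The decisive input is the parity of $2n-1$, not its size relative to $m$; your sketch contains no workable substitute for this identity, so \eqref{eq:polyGreater0} is not actually proved.

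The second gap is the inequality $\sum_{j=0}^{\df-1}|\wh{a_0}(\xi+2\pi j/\df)|^2\le 1$, which you correctly flag as the main obstacle but then defer to ``the same generating-function bookkeeping.'' This is where $2n-1\le m$ genuinely enters: the paper (Corollary~\ref{cor:maskRemainder}) writes the defect as $\sum_{j}h(y_j)^m\sum_{k=2n-1}^{m-1}c_{m,k}y_j^k$ by comparing $P_{m,2n-1}$ with $P_{m,m}$ and invoking the exact partition of unity $\sum_j h(y_j)^mP_{m,m}(y_j)=1$ satisfied by the orthogonal filter; nonnegativity is then immediate from $h(y_j)\ge0$, $y_j\ge0$, $c_{m,k}>0$. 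You would need either this comparison or an equivalent argument. Finally, you pass directly from the sub-QMF inequality to Algorithm~\ref{alg:2}, but the algorithm acts on a \emph{unit} row vector: one must first apply the Riesz lemma to the $2\pi/\df$-periodic defect to produce $|\wh{b_0}(\df\xi)|^2$, append the corresponding polyphase entry (split into symmetric and antisymmetric parts when it has no symmetry), and only then extend. This augmentation is what produces the count $L\in\{\df-1,\df,\df+1\}$ and the case distinction at $m=2n$ versus $m\ne 2n$; your proposal states the count without the construction that yields it.
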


 Theorem~\ref{thm:main:1} is summarized from the following sections.
 In section~2, we shall inspect the
important properties of $P_{m,n}(y)$ and show that for any dilation
factor $\df$ and $m,n\in\N$, we can construct a symmetric complex
$\df$-refinable function from $P_{m,2n-1}(y)$,  called
\emph{{\textbf{complex}} $\df$-refinable pseudo spline of type I
with order $(m,2n-1)$}. Moreover, we prove that the shifts of all
such complex pseudo splines are linearly independent. In section~3,
we introduce the general problem of matrix extension with symmetry
and present a step-by-step algorithm for the matrix extension with
symmetry. In section~4, we shall apply our matrix extension
algorithm and show that we can construct symmetric complex tight
$\df$-wavelet frames from symmetric complex pseudo spline with only
$\df$ wavelet generators. Moreover, we shall provide several
examples to demonstrate our results and algorithms. Conclusions will
be given in last section.

\section{Symmetric Complex Refinable Functions from Pseudo Splines}
In this section, we shall restrict our attention to the construction
of $\df$-refinable functions from which wavelet system with high
vanishing moments can be derived. In other words, we need to design
low-pass filters $a$ such that $a$ satisfies \eqref{eq:lowerpassat0}
for some $n\in\N$.

To guarantee that the $\df$-refinable function $\phi$ associated
with $a$ has certain regularity, usually the low-pass filter $a$
satisfies the \emph{sum rules of order} $m$ for some $m\in\N$:
\begin{equation}
(1+e^{-\iu\xi}+\cdots+e^{-\iu (\df-1)\xi})^m \mid \hat{a}(\xi).
\end{equation}
That is, $\wh{a}$ is of the form:
\begin{equation}\label{eq:maskA}
\hat{a}(\xi) =
e^{\iu\lfloor\frac{m(\df-1)}{2}\rfloor}\left(\frac{1+e^{-\iu\xi}+\cdots+e^{-\iu
(\df-1)\xi}}{\df}\right)^m \wh{L}(\xi),
\end{equation}
for some $2\pi$-periodic trigonometric polynomial $\wh L$ with $\wh
L(0)=1$. For $\wh L(\xi) \equiv 1$.  $\wh{a}(\xi)$ is the low-pass
filter for B-spline  of order $m$: $\wh{B_m}(\xi) = e^{\iu\lfloor
m/2\rfloor\xi}(1-e^{-\iu\xi})^m/(\iu\xi)^m$.

Define a function $h$ by
\begin{equation}\label{eq:h}
h(y):=\prod_{k=1}^{\df-1}\left(1-\frac{y}{\sin^2(k\pi/\df)}\right),
\quad y\in\R.
\end{equation}
We have
\begin{equation}\label{eq:hInv}
h(\sin^2(\xi/2))=\frac{|1+\cdots+e^{-\iu
(\df-1)\xi}|^2}{\df^2}=\frac{\sin^2(\df\xi/2)}{\df^2\sin^2(\xi/2)}
\end{equation}
and
\begin{equation}\label{eq:taylorh}
h(y)^{-m} =
\left[\prod_{k=1}^{\df-1}\left(\sum_{j_k=0}^\infty\frac{y^{j_k}}{\sin^{2j_k}(k\pi/\df)}\right)\right]^{-m}=\sum_{j=0}^\infty
c_{m,j}y^j, \quad |y|<\sin^2(\pi/\df),
\end{equation}
where
\begin{equation}\label{eq:coefh}
c_{m,j}=\sum_{j_1+\cdots+j_{\df-1}=j}\prod_{k=1}^{\df-1}{m-1+j_k
\choose j_k} \sin(k\pi/\df)^{-2j_k}, \quad j\in\N.
\end{equation}
Note that $P_{m,n}(y)=\sum_{j=0}^{n-1}c_{m,j}y^j$. Consequently, we
have the following result.
\begin{lemma}\label{lemma:PolyPmn}
Let $m,n\in\N$ be such that $n\le m$; let $P_{m,n}$ and $h$ be
polynomials defined as in \eqref{def:poly} and  \eqref{eq:h},
respectively. Then $P_{m,n}(\sin^2(\xi/2))$ is the unique positive
trigonometric polynomial of minimal degree such that
\begin{equation}\label{eq:derivingP}
1-h(\sin^2(\xi/2))^mP_{m,n}(\sin^2(\xi/2)) = O(|\xi|^{2n}).
\end{equation}
\end{lemma}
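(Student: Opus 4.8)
The plan is to pass to the auxiliary variable $y=\sin^2(\xi/2)$ and exploit that, as $\xi\to0$, one has $y=\sin^2(\xi/2)\asymp \xi^2/4$, so that an even function of $\xi$ of the form $G(\sin^2(\xi/2))$ is $O(|\xi|^{2n})$ precisely when $G(y)=O(y^n)$ as $y\to0$. With this dictionary the condition \eqref{eq:derivingP} becomes the purely algebraic requirement that $1-h(y)^mP(y)=O(y^n)$ for a polynomial $P$, and the whole statement reduces to facts about truncating the power series \eqref{eq:taylorh}.

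First I would verify that $P_{m,n}$ itself satisfies \eqref{eq:derivingP}. Since $h(0)=1$, the polynomial $h(y)^m$ is a unit in the ring of formal power series, and \eqref{eq:taylorh} gives $h(y)^{-m}=\sum_{j=0}^\infty c_{m,j}y^j$. Writing $P_{m,n}(y)=\sum_{j=0}^{n-1}c_{m,j}y^j$ as the degree-$(n-1)$ truncation of this series, I would compute
\[
1-h(y)^mP_{m,n}(y)=h(y)^m\bigl(h(y)^{-m}-P_{m,n}(y)\bigr)=h(y)^m\sum_{j=n}^\infty c_{m,j}y^j .
\]
Because $h(y)^m=1+O(y)$, the right-hand side is $O(y^n)$, which is exactly \eqref{eq:derivingP} after translating back through $y=\sin^2(\xi/2)$ and \eqref{eq:hInv}.

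For minimality and uniqueness, I would observe that \eqref{eq:derivingP} is equivalent to $h(y)^mP(y)\equiv1\pmod{y^n}$, i.e.\ to $P\equiv h^{-m}\pmod{y^n}$ in $\C[[y]]$. Since $h^m$ has constant term $1$, matching the coefficients of $y^0,\dots,y^{n-1}$ is a triangular (forward-substitution) system that forces $p_j=c_{m,j}$ for $j=0,\dots,n-1$ in any admissible $P=\sum_j p_jy^j$. A polynomial of degree $d<n-1$ cannot meet these constraints, as it would require a nonzero coefficient $c_{m,j}$ in a vanishing slot for some $d<j\le n-1$; hence the minimal degree is exactly $n-1$, and the unique degree-$(n-1)$ solution, obtained by setting all higher coefficients to zero, is precisely $P_{m,n}$.

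Finally I would establish positivity directly from \eqref{eq:coefh}: each $\sin(k\pi/\df)$ with $1\le k\le\df-1$ is a strictly positive real number and each $\binom{m-1+j_k}{j_k}$ is a positive integer, so every coefficient $c_{m,j}$ is strictly positive, with $c_{m,0}=1$. Consequently $P_{m,n}(y)\ge c_{m,0}=1>0$ for all $y\ge0$, and in particular $P_{m,n}(\sin^2(\xi/2))>0$ for every $\xi\in\R$ since $\sin^2(\xi/2)\in[0,1]$. The only genuinely delicate point is the asymptotic dictionary between $O(|\xi|^{2n})$ and $O(y^n)$ together with the invertibility of $h(y)^m$ near the origin; once these are secured, the remaining steps are bookkeeping on the triangular coefficient-matching system and on the manifest positivity of the $c_{m,j}$.
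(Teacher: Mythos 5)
Your proposal is correct and follows essentially the same route as the paper's proof: both identify $P_{m,n}$ as the degree-$(n-1)$ Taylor truncation of $h(y)^{-m}$ at $y=0$, deduce that any admissible $g$ must agree with $P_{m,n}(\sin^2(\xi/2))$ up to $O(|\sin^2(\xi/2)|^n)$, and read off positivity from the positivity of the coefficients $c_{m,j}$. You merely spell out the coefficient-matching and the $y\asymp\xi^2/4$ dictionary more explicitly than the paper does.
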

\begin{proof} Suppose there exist trigonometric polynomial $\hat
g(\xi)$ such that
\[
1-h(\sin^2(\xi/2))^m g(\xi) =O(|\xi|^{2n}).
\]
Observing that the first $(n-1)th$-degree Taylor polynomial of
$h(y)^{-m}$ at $y=0$ is $P_{m,n}(y)$, we have
\[
g(\xi) = h(\sin^2(\xi/2))^{-m}[1+O(|\xi|^{2n})] =
P_{m,n}(\sin^2(\xi/2))+O(|\sin^2(\xi/2)|^n).
\]
Moreover, it is easily seen that the coefficients of $P_{m,n}(y)$
are all positive. Consequently, $P_{m,n}(\sin^2(\xi/2))$ is   the
unique positive trigonometric polynomial of minimal degree such that
\eqref{eq:derivingP} holds. \eop
\end{proof}

For  $m,n\in\N$ such that $n\le m$, let $\wh{_{II}
a}(\xi):=h(\sin^2(\xi/2)^mP_{m,n}(\sin^2(\xi/2))$. Then the
$\df$-refinable function $_{II}\phi$ associated with ${_{II} a}$ by
\eqref{refeq} is called the \emph{pseudo spline of type II}. By
Lemma \ref{lemma:PolyPmn}, using Riesz Lemma,  one can  derive a
low-pass filter $_{I}a$ from $_{II}a$ such that $|\wh{_I a}(\xi)|^2
=\wh{_{II} a}(\xi)$. The $\df$-refinable function $_I\phi$
associated with such $_I a$ by \eqref{refeq} is referred as
\emph{real pseudo spline of type I}. Interesting readers can refer
to \cite{DongShen1,DongShen1,DongShen1,DongShen1} for more details
on this subject for the special case $\df = 2$.

Note that $_I a$ satisfies \eqref{eq:lowerpassat0}. One can
construct high-pass filters $b^1,\ldots,b^L$ from $_Ia$ such that
\eqref{UEP} holds. Then $\psi^1,\ldots,\psi^L$ defined by
\eqref{waveletGenerators} are real-valued functions.
$\{\psi^1,\ldots,\psi^L\}$ has vanishing moment of order $n$ and
generates a tight $\df$-wavelet frame. However,
$\{\psi^1,\ldots,\psi^L\}$ is not necessary symmetric since the
low-pass filter $_Ia$ from $_{II}a$ via Riesz lemma might not posses
any symmetry pattern. In what follows, we shall consider
complex-valued wavelet generators and show that we can achieve
symmetry for any odd integer $n\in N$.

The next lemma is needed later, which generalizes \cite[Lemma
2.1]{LiMoYu}.

\begin{lemma}\label{lemma:derivativePoly}
Let $P_{m,n}(y)$ be defined as in \eqref{def:poly}. Then
\begin{equation}\label{eq:derivativePoly}
P_{m,n}'(y) =
m\sum_{\ell=1}^{\df-1}\frac{1}{(1-\frac{y}{\sin^2(\frac{\ell\pi}{\df})})}\left[
\frac{P_{m,n}(y)}{\sin^2(\frac{\ell\pi}{\df})}-c^\ell_{n-1}y^{n-1}\right],
\end{equation}
where
\[
c^\ell_{n-1}=\sum_{j_1+\cdots+j_{\df-1}=n-1}\left[{m+j_\ell \choose
j_\ell}\sin(\frac{\ell\pi}{\df})^{-2(j_\ell+1)}\prod_{k=1,k\neq
\ell}^{\df-1}{m-1+j_k \choose
j_k}\sin(\frac{k\pi}{\df})^{-2j_k}\right].
\]
\end{lemma}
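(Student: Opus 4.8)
The plan is to obtain \eqref{eq:derivativePoly} from the logarithmic derivative of the generating function $h(y)^{-m}$ and then descend to the polynomial $P_{m,n}$ by a Taylor-truncation argument. Write $s_\ell:=\sin^2(\ell\pi/\df)$ for $\ell=1,\dots,\df-1$ and set $F(y):=h(y)^{-m}=\prod_{\ell=1}^{\df-1}(1-y/s_\ell)^{-m}$; by \eqref{eq:taylorh} this is analytic for $|y|<s_1=\sin^2(\pi/\df)$ with Taylor coefficients $c_{m,j}$ from \eqref{eq:coefh}, and by \eqref{def:poly} the polynomial $P_{m,n}$ is its $(n-1)$st Taylor polynomial, so $F(y)=P_{m,n}(y)+R(y)$ with $R(y)=O(y^{n})$.

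First I would record the logarithmic-derivative identity: differentiating $\log F=-m\sum_\ell\log(1-y/s_\ell)$ gives
\[
F'(y)=m\sum_{\ell=1}^{\df-1}\frac{1}{s_\ell\,(1-y/s_\ell)}\,F(y).
\]
Next I would identify the constant $c^\ell_{n-1}$ in the statement as the coefficient of $y^{n-1}$ in the power series of $\dfrac{F(y)/s_\ell}{1-y/s_\ell}=\dfrac{1}{s_\ell}(1-y/s_\ell)^{-(m+1)}\prod_{k\neq\ell}(1-y/s_k)^{-m}$: expanding each factor by the binomial series exactly as in \eqref{eq:coefh} reproduces the displayed sum defining $c^\ell_{n-1}$. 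Extracting this coefficient as a Cauchy product of $\tfrac1{s_\ell}(1-y/s_\ell)^{-1}$ against $F$ and re-indexing then yields the key evaluation $P_{m,n}(s_\ell)=c^\ell_{n-1}\,s_\ell^{\,n}$.

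With these two facts I would finish in two steps. Let $\Phi(y)$ denote the right-hand side of \eqref{eq:derivativePoly}. Substituting $P_{m,n}=F-R$ into $\Phi$ and using the logarithmic-derivative identity on the $F$-part gives
\[
\Phi(y)-P_{m,n}'(y)=R'(y)-m\sum_{\ell=1}^{\df-1}\frac{1}{1-y/s_\ell}\Big[\frac{R(y)}{s_\ell}+c^\ell_{n-1}y^{n-1}\Big].
\]
Since $R=O(y^{n})$ and $R'=O(y^{n-1})$, every term on the right is $O(y^{n-1})$, so $\Phi-P_{m,n}'$ has no terms of degree below $n-1$. On the other hand $\Phi$ is a priori a rational function whose only possible singularities are simple poles at $y=s_\ell$; the residue of $\Phi$ at $y=s_\ell$ equals $-m\big[P_{m,n}(s_\ell)-c^\ell_{n-1}s_\ell^{\,n}\big]$, which vanishes by the evaluation above. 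Hence $\Phi$ has no poles, and because each summand is $O(y^{n-2})$ as $y\to\infty$, $\Phi$ is a polynomial of degree at most $n-2$. Consequently $\Phi-P_{m,n}'$ is a polynomial of degree at most $n-2$ all of whose coefficients up to $y^{n-2}$ vanish, forcing $\Phi\equiv P_{m,n}'$, which is \eqref{eq:derivativePoly}.

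The main obstacle is the evaluation $P_{m,n}(s_\ell)=c^\ell_{n-1}s_\ell^{\,n}$: one must correctly read the definition of $c^\ell_{n-1}$ as the $(n-1)$st Taylor coefficient of $\frac{F/s_\ell}{1-y/s_\ell}$ and, after re-indexing the Cauchy product, match it to $s_\ell^{-n}P_{m,n}(s_\ell)$. This is precisely what cancels the spurious poles of $\Phi$ at $y=s_\ell$ and makes $\Phi$ a genuine polynomial; the remaining arguments are routine bookkeeping with Taylor orders.
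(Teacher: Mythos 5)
Your argument is correct, but it proceeds quite differently from the paper. The paper works entirely at the level of the multinomial coefficients: it differentiates \eqref{def:poly} term by term, writes the factor $j+1$ as $\sum_\ell j_\ell$ over the constraint $j_1+\cdots+j_{\df-1}=j+1$, absorbs $j_\ell/m$ into the binomial coefficient, and then verifies the polynomial identity $\bigl(1-\tfrac{y}{s_\ell}\bigr)Q_\ell(y)=\tfrac{P_{m,n}(y)}{s_\ell}-c^\ell_{n-1}y^{n-1}$ for an explicitly defined $Q_\ell$ by a telescoping use of Pascal's rule ${m+j_\ell\choose j_\ell}-{m-1+j_\ell\choose j_\ell-1}={m-1+j_\ell\choose j_\ell}$. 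You instead lift everything to the generating function $F=h^{-m}$, use its logarithmic derivative, recognize $c^\ell_{n-1}$ as the $(n-1)$st Taylor coefficient of $\tfrac{F/s_\ell}{1-y/s_\ell}$ (which I checked does match the displayed formula, and via the Cauchy product does give $P_{m,n}(s_\ell)=c^\ell_{n-1}s_\ell^{\,n}$), and then conclude by a residue-plus-degree count. Your route is more conceptual: it explains the shape of \eqref{eq:derivativePoly} as the truncation of the exact identity $F'=m\sum_\ell\frac{F}{s_\ell(1-y/s_\ell)}$, with $c^\ell_{n-1}y^{n-1}$ appearing precisely as the correction that cancels the pole at $y=s_\ell$; the paper's route is more elementary and self-contained, avoiding any analytic language. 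One small point worth tightening: since $\sin^2(\ell\pi/\df)=\sin^2((\df-\ell)\pi/\df)$, distinct summands can share the same pole location, so you should argue that each individual summand has vanishing residue (hence is itself a polynomial of degree at most $n-2$), rather than speaking of ``the residue of $\Phi$'' at $s_\ell$; this is exactly what your evaluation $P_{m,n}(s_\ell)=c^\ell_{n-1}s_\ell^{\,n}$ delivers, so the fix is cosmetic.
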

\begin{proof}
By \eqref{def:poly}, we have
\[
\begin{aligned}
P_{m,n}'(y) &=
\sum_{j=0}^{n-2}\left[\sum_{j_1+\cdots+j_{\df-1}=j+1}\prod_{k=1}^{\df-1}{m-1+j_k
\choose j_k} \sin(\frac{k\pi}{\df})^{-2j_k}\right](j+1)y^j\\
& =m
\sum_{j=0}^{n-2}\left[\sum_{\ell=1}^{\df-1}\sum_{j_1+\cdots+j_{\df-1}=j+1}\frac{j_\ell}{m}\prod_{k=1}^{\df-1}{m-1+j_k
\choose j_k} \sin(\frac{k\pi}{\df})^{-2j_k}\right]y^j\\
&=m
\sum_{\ell=1}^{\df-1}\sum_{j=0}^{n-2}\left[\sum_{j_1+\cdots+j_{\df-1}=j}{m+j_\ell
\choose j_\ell}
\sin(\frac{\ell\pi}{\df})^{-2(j_\ell+1)}\prod_{k=1,k\neq\ell}^{\df-1}{m-1+j_k
\choose j_k} \sin(\frac{k\pi}{\df})^{-2j_k}\right]y^j
\\
&=:m\sum_{\ell=1}^{\df-1}Q_\ell(y),
\end{aligned}
\]
where
\[
Q_\ell(y):=\sum_{j=0}^{n-2}\left[\sum_{j_1+\cdots+j_{\df-1}=j}{m+j_\ell
\choose j_\ell}
\sin(\frac{\ell\pi}{\df})^{-2(j_\ell+1)}\prod_{k=1,k\neq\ell}^{\df-1}{m-1+j_k
\choose j_k} \sin(\frac{k\pi}{\df})^{-2j_k}\right]y^j
\]
Note that
\[
\begin{aligned}
&\frac{y}{\sin^2(\frac{\ell\pi}{\df})}Q_\ell(y)
\\&=\sum_{j=0}^{n-2}\left[\sum_{j_1+\cdots+j_{\df-1}=j}{m+j_\ell
\choose j_\ell}
\sin(\frac{\ell\pi}{\df})^{-2(j_\ell+2)}\prod_{k=1,k\neq\ell}^{\df-1}{m-1+j_k
\choose j_k} \sin(\frac{k\pi}{\df})^{-2j_k}\right]y^{j+1}
\\&=\sum_{j=1}^{n-1}\left[\sum_{j_1+\cdots+j_{\df-1}=j}{m-1+j_\ell
\choose j_\ell-1} \sin(\frac{\ell\pi}{\df})^{-2(j_\ell+
1)}\prod_{k=1,k\neq\ell}^{\df-1}{m-1+j_k \choose j_k}
\sin(\frac{k\pi}{\df})^{-2j_k}\right]y^{j}.
\end{aligned}
\]
Using ${m+j_\ell \choose j_\ell}-{m-1+j_\ell \choose
j_\ell-1}={m-1+j_\ell \choose j_\ell}$, it is easy to deduce that
\[
\left(1-\frac{y}{\sin^2(\frac{\ell\pi}{\df})}\right)Q_\ell(y)=\frac{P_{m,n}(y)}{\sin^2(\frac{\ell\pi}{\df})}-c^\ell_{n-1}y^{n-1},
\]
where
\[
c^\ell_{n-1}=\sum_{j_1+\cdots+j_{\df-1}=n-1}\left[{m+j_\ell \choose
j_\ell}\sin(\frac{\ell\pi}{\df})^{-2(j_\ell+1)}\prod_{k=1,k\neq
\ell}^{\df-1}{m-1+j_k \choose
j_k}\sin(\frac{k\pi}{\df})^{-2j_k}\right].
\]
Consequently, \eqref{eq:derivativePoly} holds.
 \eop
\end{proof}

By the above lemma, we have the following result regarding the
positiveness of $P_{m,n}(y)$, which generalizes \cite[Theorem
5]{Han} and \cite[Theorem 2.4]{LiHanYu} and proves
\eqref{eq:polyGreater0}.

\begin{theorem}
Let $m,n\in\N$ be  such that $n\le m$. Then $P_{m,n}(y)>0$ for all
$y\in\R$ if and only if $n$ is an odd number.
\end{theorem}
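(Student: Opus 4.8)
The plan is to prove the two implications separately, in each case reading off the sign behaviour of $P_{m,n}$ from the parity of its degree $n-1$, and using the derivative identity \eqref{eq:derivativePoly} for the harder direction.

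For necessity (positivity $\Rightarrow$ $n$ odd) I would argue by contraposition. If $n$ is even, then $n-1$ is odd, so $P_{m,n}$ is a polynomial of odd degree whose leading coefficient $c_{m,n-1}$ is positive. Hence $P_{m,n}(y)\to-\infty$ as $y\to-\infty$, so $P_{m,n}$ takes negative values and cannot be positive on all of $\R$. Thus positivity forces $n$ to be odd.

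For sufficiency, assume $n$ is odd, so that $n-1$ is even. First I would record two elementary facts. Since every coefficient $c_{m,j}$ is positive and $c_{m,0}=1$, one has $P_{m,n}(y)\ge 1>0$ for all $y\ge 0$. Moreover, as $\deg P_{m,n}=n-1$ is even with positive leading coefficient, $P_{m,n}(y)\to+\infty$ as $y\to\pm\infty$, so $P_{m,n}$ attains a global minimum at some $y^*\in\R$. The crux is to exclude $P_{m,n}(y^*)\le 0$. Suppose it held; by the first fact the minimizer then satisfies $y^*<0$, and being an interior extremum it obeys $P_{m,n}'(y^*)=0$. Evaluating the right-hand side of \eqref{eq:derivativePoly} at $y^*$, note that each denominator $1-y^*/\sin^2(\ell\pi/\df)$ is strictly positive (as $y^*<0$); that $(y^*)^{n-1}>0$ (since $n-1$ is even and $y^*\neq 0$); and that each $c^\ell_{n-1}$ is a sum of products of positive binomial coefficients and positive powers of $\sin^{-2}(k\pi/\df)$, hence $c^\ell_{n-1}>0$. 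Therefore each bracket $\frac{P_{m,n}(y^*)}{\sin^2(\ell\pi/\df)}-c^\ell_{n-1}(y^*)^{n-1}$ is strictly negative, its first summand being $\le 0$ and its second strictly positive, so the whole sum is strictly negative. This forces $P_{m,n}'(y^*)<0$, contradicting $P_{m,n}'(y^*)=0$. Hence $P_{m,n}(y^*)>0$, and since $y^*$ is a global minimizer, $P_{m,n}(y)>0$ for all $y\in\R$.

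The main obstacle is conceptual rather than computational: recognizing that the derivative identity of Lemma \ref{lemma:derivativePoly}, together with the even parity of $n-1$, yields an immediate sign contradiction at any nonpositive global minimum. Once this observation is in hand, the remaining steps reduce to elementary sign bookkeeping for the denominators, the factor $(y^*)^{n-1}$, and the coefficients $c^\ell_{n-1}$.
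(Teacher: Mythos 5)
Your proof is correct and follows essentially the same route as the paper's: necessity from the odd degree $n-1$ forcing $P_{m,n}(y)\to-\infty$ as $y\to-\infty$ when $n$ is even, and sufficiency by applying the derivative identity of Lemma~\ref{lemma:derivativePoly} at a negative stationary point to obtain a sign contradiction. Your write-up is in fact slightly more careful than the paper's in justifying why the sign condition at stationary points suffices (via the existence of a global minimizer for the even-degree polynomial).
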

\begin{proof}
It is easily seen that $P_{m,n}(y)>0$ for all $y\ge0$ due to the
positiveness of its coefficients and
$\lim_{y\rightarrow-\infty}P_{m,n}(y)=-\infty$ when $n$ is even.
Hence $n$ must be an odd number if $P_{m,n}(y)>0$ for all $y\in\R$.

On the other hand, suppose $n$ is an odd number. Let $y_0<0$ be any
stationary point of $P_{m,n}(y)$. Then by Lemma
\ref{lemma:derivativePoly}, we have
\[
0=P'_{m,n}(y_0)=m\sum_{\ell=1}^{\df-1}\frac{1}{(1-\frac{y_0}{\sin^2(\frac{\ell\pi}{\df})})}\left[
\frac{P_{m,n}(y_0)}{\sin^2(\frac{\ell\pi}{\df})}-c^\ell_{n-1}y_0^{n-1}\right],
\]
which implies that $P_{m,n}(y_0)>0$.  Consequently, $P_{m,n}(y)>0$
for all $y<0$. This completes our proof. \eop
\end{proof}

Now, by $P_{m,2n-1}(y)>0$ for all $y\in\R$ and $2n-1\le m$,
$P_{m,2n-1}(y)$ can only have complex roots. Hence, we must have
\[
P_{m,2n-1}(y)=c_0\prod_{j=1}^{n-1}(y-z_j)(y-\overline{z_j}),\quad
z_1,\overline{z_1},\ldots,z_{n-1},\overline{z_{n-1}}\in\C\setminus\R.
\]
Define $Q_{m,n}(y) = c\prod_{j=1}^{n-1}(y-z_j)$ as in \eqref{eq:Qmn}
and $\wh{a_0}(\xi)$ as in \eqref{def:Amn}. Then it is easy to check
that the symmetry pattern of $a_0$  satisfies \eqref{eq:symAmn}. We
shall refer the $\df$-refinable function $\phi_{m,n}$ associated
with the low-pass filter $a_0$ as \emph{complex pseudo spline of
type I}.

Now, we have the following result which shall play an important role
in our construction of tight $\df$-wavelet frame in section~3.
\begin{cor}\label{cor:maskRemainder}
 Let $m,n\in\N$ be such that $2n-1\le m$ and $a_0$ be
defined as in \eqref{def:Amn}. Then
\begin{equation}\label{eq:maskForUEP}
1-\sum_{j=0}^{\df-1}|\wh{a_0}(\xi+2\pi j/\df)|^2 =|\wh{
b_0}(\df\xi)|^2,
\end{equation}
for some $2\pi$-periodic trigonometric function $\wh {b_0}(\xi)$
with real coefficients. In particular, $\wh
{b_0^{2n-1,n}}(\xi)\equiv0$ and $|\wh {b_0^{2n,n}}(\xi)|^2 =
c_{2n,2n-1}[\sin^2(\xi/2)/\df^2]^{2n-1}$ with $c_{2n,2n-1}$ being
the coefficient given in \eqref{eq:coefh}.
\end{cor}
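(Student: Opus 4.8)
The plan is to compute the quantity $1-\sum_{j=0}^{\df-1}|\wh{a_0}(\xi+2\pi j/\df)|^2$ explicitly and show it factors as a nonnegative trigonometric polynomial in $\df\xi$ with real coefficients. First I would substitute the definition \eqref{def:Amn} into the sum. Since $|\wh{a_0}(\xi)|^2 = \bigl|(1+e^{-\iu\xi}+\cdots+e^{-\iu(\df-1)\xi})/\df\bigr|^{2m}\,|Q_{m,n}(\sin^2(\xi/2))|^2$, the unimodular factor $e^{\iu\lfloor m(\df-1)/2\rfloor\xi}$ drops out, and by \eqref{eq:Qmn} we have $|Q_{m,n}(\sin^2(\xi/2))|^2 = P_{m,2n-1}(\sin^2(\xi/2))$, which is a \emph{real-coefficient} polynomial in $\sin^2(\xi/2)$. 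Using \eqref{eq:hInv}, the B-spline factor equals $h(\sin^2(\xi/2))^m$, so
\[
|\wh{a_0}(\xi)|^2 = h(\sin^2(\xi/2))^m\,P_{m,2n-1}(\sin^2(\xi/2)),
\]
which is exactly the real type-II mask symbol $\wh{_{II}a}$ with parameter $2n-1$ in place of $n$.

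Next I would invoke Lemma~\ref{lemma:PolyPmn} applied with the pair $(m,2n-1)$ (valid since $2n-1\le m$). The lemma gives that $1-h(\sin^2(\xi/2))^m P_{m,2n-1}(\sin^2(\xi/2)) = O(|\xi|^{2(2n-1)})$, so the full alias sum $\sum_j|\wh{a_0}(\xi+2\pi j/\df)|^2$ is a $2\pi$-periodic trigonometric polynomial with real coefficients, and $1$ minus it vanishes to high order at the origin. The key structural point is that this alias sum is a trigonometric polynomial in $\df\xi$: this is because $\sum_{j=0}^{\df-1}|(1+\cdots+e^{-\iu(\xi+2\pi j/\df)(\df-1)})/\df|^{2m}$ and the companion sum involving $P_{m,2n-1}$ are invariant under $\xi\mapsto\xi+2\pi/\df$ only after summing, and standard aliasing identities show such symmetric sums reduce to polynomials in $\cos(\df\xi)$, hence in $\sin^2(\df\xi/2)$. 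Then $1-\sum_j|\wh{a_0}(\xi+2\pi j/\df)|^2 = R(\sin^2(\df\xi/2))$ for some real polynomial $R$, and setting $\wh{b_0}(\df\xi)$ so that $|\wh{b_0}(\df\xi)|^2 = R(\sin^2(\df\xi/2))$ gives \eqref{eq:maskForUEP}, with nonnegativity of $R$ guaranteed because $\sum_j|\wh{a_0}|^2\le 1$ (this last bound following from the fact that for $m=2n-1$ the filter is orthogonal, and increasing $m$ only improves the approximation).

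For the two special cases, I would argue as follows. When $m=2n-1$, the filter $a_0$ is the orthogonal filter: $P_{2n-1,2n-1}(\sin^2(\xi/2))$ is precisely the full Taylor reciprocal making $h^m P_{m,2n-1}\equiv$ the orthogonal low-pass condition exactly, so $\sum_j|\wh{a_0}(\xi+2\pi j/\df)|^2\equiv 1$ and hence $\wh{b_0^{2n-1,n}}\equiv 0$. When $m=2n$, the defect is governed by the first omitted Taylor coefficient: since $P_{2n,2n-1}$ is the degree-$(2n-2)$ truncation of $h^{-2n}$, the error term $1-h(\sin^2(\xi/2))^{2n}P_{2n,2n-1}(\sin^2(\xi/2))$ has leading behavior $c_{2n,2n-1}(\sin^2(\xi/2))^{2n-1}$ near the origin, and after the aliasing reduction the dominant contribution is exactly $c_{2n,2n-1}[\sin^2(\xi/2)/\df^2]^{2n-1}$ evaluated in the $\df\xi$ variable, yielding the stated formula for $|\wh{b_0^{2n,n}}(\xi)|^2$.

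The main obstacle I anticipate is justifying rigorously that the alias sum $\sum_{j=0}^{\df-1}|\wh{a_0}(\xi+2\pi j/\df)|^2$ genuinely reduces to a polynomial in the single variable $\sin^2(\df\xi/2)$, rather than merely being $(2\pi/\df)$-periodic; and, relatedly, confirming the precise normalization $[\sin^2(\xi/2)/\df^2]^{2n-1}$ in the $m=2n$ case. This requires carefully tracking how the $\df$-fold aliasing acts on the product $h^m P_{m,2n-1}$ via the identity \eqref{eq:hInv} and extracting the lowest-order surviving term; the bookkeeping of the constant $c_{2n,2n-1}$ through this reduction is where the computation must be done with care, rather than appealed to abstractly.
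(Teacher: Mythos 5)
Your overall strategy (reduce to $|\wh{a_0}(\xi)|^2=h(\sin^2(\xi/2))^m P_{m,2n-1}(\sin^2(\xi/2))$, show the defect $1-\sum_{j=0}^{\df-1}|\wh{a_0}(\xi+2\pi j/\df)|^2$ is a nonnegative $2\pi/\df$-periodic trigonometric polynomial, and apply the Riesz lemma) matches the paper's, but two steps are genuinely gapped. First, your justification of nonnegativity --- ``for $m=2n-1$ the filter is orthogonal, and increasing $m$ only improves the approximation'' --- does not prove $\sum_j|\wh{a_0}(\xi+2\pi j/\df)|^2\le 1$: improving the Taylor approximation of $h^{-m}$ at the origin says nothing about a global inequality, and comparing filters across different $m$ changes both factors $h^m$ and $P_{m,\cdot}$ at once. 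The paper's argument fixes $m$ and compares $P_{m,2n-1}$ with $P_{m,m}$: writing $y_j=\sin^2(\xi/2+\pi j/\df)$, one has $P_{m,m}(y)-P_{m,2n-1}(y)=\sum_{k=2n-1}^{m-1}c_{m,k}y^k\ge 0$ for $y\ge0$, and the order-$(m,m)$ filter is orthogonal, so $\sum_j h(y_j)^m P_{m,m}(y_j)=1$; hence $1-\sum_j h(y_j)^m P_{m,2n-1}(y_j)=\sum_j h(y_j)^m\sum_{k=2n-1}^{m-1}c_{m,k}y_j^k\ge0$. This identity is the engine of the whole proof and is absent from your argument. (Also, Lemma~\ref{lemma:PolyPmn}, which you invoke, only controls the order of the zero at $\xi=0$ and is not the relevant tool here; and you do not need the alias sum to be a polynomial in $\sin^2(\df\xi/2)$ --- $2\pi/\df$-periodicity together with evenness, which follows from \eqref{eq:symAmn}, already suffices to apply Riesz and obtain real coefficients.)

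Second, for $m=2n$ you only assert that the ``leading behavior'' / ``dominant contribution'' is $c_{2n,2n-1}[\sin^2(\xi/2)/\df^2]^{2n-1}$, but the claim in the corollary is an exact identity, not an asymptotic one. The exact computation follows from the display above: for $m=2n$ the sum over $k$ has the single term $k=2n-1=m-1$, so the defect equals $c_{2n,2n-1}\sum_j h(y_j)^{2n}y_j^{2n-1}=c_{2n,2n-1}\sum_j\bigl(h(y_j)y_j\bigr)^{2n-1}h(y_j)$; since $h(y_j)y_j=\sin^2(\df\xi/2)/\df^2$ is independent of $j$ by \eqref{eq:hInv}, and $\sum_j h(y_j)=1$ (the orthogonality identity at $m=1$), the stated formula follows exactly. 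Identifying that $h(y_j)y_j$ does not depend on $j$ is precisely the ``bookkeeping'' you flagged as the obstacle, and without it the constant cannot be pinned down.
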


\begin{proof} We first show the $1-\sum_{j=0}^{\df-1}|\wh{a_0}(\xi+2\pi
j/\df)|^2\ge 0$ for all $\xi\in\R$. Let $y_j:=\sin^2(\xi/2+\pi
j/\df)$, $j=0,\ldots,\df-1$. Since $|\wh{a_0}(\xi)|^2 =
h(\sin^2(\xi/2))^mP_{m,2n-1}(\sin^2(\xi/2))$, we have
\[
\begin{aligned}
1-\sum_{j=0}^{\df-1}|\wh{a_0}(\xi+2\pi
j/\df)|^2&=1-\sum_{j=0}^{\df-1}h(y_j)^mP_{m,2n-1}(y_j)
\\&=1-\sum_{j=0}^{\df-1}h(y_j)^mP_{m,m}(y_j)+\sum_{j=0}^{\df-1}h(y_j)^m\sum_{k=2n-1}^{m-1}c_{m,k}y_j^k
\\&=\sum_{j=0}^{\df-1}h(y_j)^m\sum_{k=2n-1}^{m-1}c_{m,k}y_j^k\ge 0.
\end{aligned}
\]
The last equality follows from  the fact that the low-pass filter
$a$ satisfying $|\hat{a}(\xi)|^2=h(y_0)^mP_{m,m}(y_0)$  is an
orthogonal low-pass filter , which satisfies
$\sum_{j=0}^{\df-1}h(y_j)^mP_{m,m}(y_j)=1$ (\cite{RankM}). Now, by
that $1-\sum_{j=0}^{\df-1}|\wh{a_0}(\xi+2\pi j/\df)|^2\ge 0$ is of
period $2\pi/\df$, \eqref{eq:maskForUEP} follows from Riesz Lemma.

Obviously, $\wh{b_0^{2n-1,n}}(\xi)\equiv0$ since $a_0^{2n-1,n}$ is
an orthogonal low-pass filter. And from above, noting that
$h(y_j)y_j=\sin^2(\df\xi/2)/\df^2$. we have
\[
\begin{aligned}
|\wh{b_0^{2n,2n-1}}(\df\xi)|^2
&=c_{2n,2n-1}\sum_{j=0}^{\df-1}h(y_j)^{2n}y_j^{2n-1}
=c_{2n,2n-1}\sum_{j=0}^{\df-1}(h(y_j)y_j)^{2n-1})h(y_j)
\\&=c_{2n,2n-1}[\sin^2(\df\xi/2)/\df^2]^{2n-1}\sum_{j=0}^{\df-1}h(y_j)P_{1,1}(y_j)
\\&=c_{2n,2n-1}[\sin^2(\df\xi/2)/\df^2]^{2n-1}.
\end{aligned}
\]
\eop
\end{proof}

Next, we shall discuss the linear independence of the pseudo spline
of type I (real and complex) and type II. Linear independence is an
important issue in both approximation theory and wavelet analysis.
The linear independence of  the integer shifts of a refinable
function  is a necessary and sufficient condition for the existence
of a compactly supported dual refinable function, see \cite{7. P. G.
LemariRieusset On the existence of compactly supported dual
waveletsAppl. Comput. Harmon. Anal. 3 (1997) 117{118}.}. For a
compactly supported function $\phi\in L_2(\R)$, we say that the
shifts of $\phi$ are \emph{linearly independent} if
\begin{equation}\label{def:linear.Independent}
\sum_{j\in\Z}c(j)\phi(\cdot-j)=0\quad \mbox{implies} \quad c(j)=0\,
\forall j\in\Z.
\end{equation}
We say that the shifts of $\phi$ are \emph{stable} if there exists
two positive constants $C_1$ and $C_2$ such that
\begin{equation}\label{def:stable}
C_1\sum_{j\in\Z}|c(j)|^2 \le
\|\sum_{j\in\Z}c(j)\phi(\cdot-j)\|_{L_2(\R)}^2\le
C_2\sum_{j\in\Z}|c(j)|^2,
\end{equation}
for all finitely supported sequences $c:\Z\mapsto\C$.

It is known in \cite{JiaMichelli} that the shifts of a compacly
supported function $\phi\in L_2(\R)$ is linearly independent (or
stable) if and only if
\begin{equation}\label{eq:lin.ind.and.stable}
\mathrm{span}\{ \hat\phi(\xi+2\pi k)\,:\, k\in\Z\}\neq 0\,\, \mbox{
for all }\,\, \xi\in\C\,(\mbox{or for all}\,\, \xi\in\R).
\end{equation}
Here, the
\emph{Fourier--Lapalce transform} of $\phi$  is defined to be
\begin{equation}\label{def:fourierLaplace}
\hat{\phi}(\xi) = \int_\R\phi(t)e^{-\iu \xi t} dt, \quad \xi\in\C.
\end{equation}
Consequently, if the shifts of a compactly supported function
$\phi\in L_2(\R)$ are linearly independent, then the shifts of
$\phi$ must be stable in $L_2(\R)$.

The following lemma is needed later to prove the linear independence
of the shifts of pseudo spline of type I or type II.
\begin{lemma}\label{lemma:increaseCmk}
Let $m,n\in\N$ be such that $1<n<m$. Let $c_{m,j}$ be the
coefficient of $P_{m,n}(y)$ defined in \eqref{eq:coefh}. Then
\begin{equation}\label{eq:increaseCmk}
2c_{m,j-1}<c_{m,j},\quad  j=1,\ldots n-1.
\end{equation}
\end{lemma}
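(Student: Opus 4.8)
The plan is to prove the inequality $2c_{m,j-1}<c_{m,j}$ for $j=1,\ldots,n-1$ by working directly with the explicit formula \eqref{eq:coefh} for the coefficients $c_{m,j}$. First I would record the defining expression
\[
c_{m,j}=\sum_{j_1+\cdots+j_{\df-1}=j}\prod_{k=1}^{\df-1}{m-1+j_k \choose j_k}\sin(k\pi/\df)^{-2j_k},
\]
noting that every summand is strictly positive. The key structural observation is that passing from $c_{m,j-1}$ to $c_{m,j}$ amounts to raising the total degree of the multi-index $(j_1,\ldots,j_{\df-1})$ by one, so I would look for a way to associate to each term contributing to $c_{m,j-1}$ a collection of larger terms contributing to $c_{m,j}$.

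The core step is a term-by-term comparison based on the ratio of consecutive binomial factors. For a single index $k$, increasing $j_k$ by one multiplies the factor ${m-1+j_k\choose j_k}$ by $\frac{m+j_k}{j_k+1}$ and introduces an extra factor $\sin(k\pi/\df)^{-2}$. Since $\sin(k\pi/\df)^{-2}\ge 1$ for all $k=1,\ldots,\df-1$ (because $\sin^2(k\pi/\df)\le 1$) and since $\frac{m+j_k}{j_k+1}\ge \frac{m}{1}\ge m\ge 2$ when $j_k\ge 0$ and $m\ge 2$, each individual ``promotion'' of a coordinate multiplies a summand by a factor that is at least $2$. The plan is to make this precise by choosing, for each multi-index $\alpha=(j_1,\ldots,j_{\df-1})$ with $|\alpha|=j-1$, a distinguished coordinate to increment (for instance the first one, or use a symmetrization/averaging over all coordinates) and to verify that the resulting map into multi-indices of weight $j$ sends the $c_{m,j-1}$-summands into $c_{m,j}$-summands while scaling each by at least $2$, which yields $2c_{m,j-1}<c_{m,j}$ after summing. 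One must check that no term of $c_{m,j}$ gets double-counted, or alternatively discard the surplus positive terms, since $c_{m,j}$ generally contains strictly more multi-indices than can be hit this way; the inequality is one-directional so discarding extra positive summands is harmless.

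The main obstacle is making the bookkeeping of the multi-index map rigorous: because there are multiple coordinates that can be incremented, the naive coordinate-wise map is not injective, and several distinct $(j-1)$-indices may map to the same $j$-index. The cleanest route is probably to average the ratio $c_{m,j}/c_{m,j-1}$ by writing $c_{m,j}$ using the identity that each weight-$j$ multi-index is obtained from some weight-$(j-1)$ multi-index by incrementing exactly one coordinate, combined with the recurrence ${m-1+j_k\choose j_k}=\frac{m-1+j_k}{j_k}{m-2+j_k\choose j_k-1}$; this expresses $c_{m,j}$ as a positive combination in which the ratio to the corresponding $c_{m,j-1}$-terms is controlled from below by $2$. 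I would therefore aim to prove the strict lower bound $\frac{m+j_k}{j_k+1}\sin(k\pi/\df)^{-2}>2$ for at least one admissible coordinate and use strict positivity together with $1<n<m$ (so that $j\le n-1<m$, keeping $\frac{m+j_k}{j_k+1}$ safely above $2$) to close the estimate, with the constraint $j<m$ being exactly what guarantees the factor stays strictly larger than $2$.
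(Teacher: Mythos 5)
Your proposal is correct and follows essentially the same route as the paper: the paper also proves the termwise bound $2\binom{m-1+j_k}{j_k}<\binom{m+j_k}{j_k+1}$ (valid because $j_k+2\le n\le m-1<m$), uses $\sin(k\pi/\df)^{-2}\ge 1$, and averages over which coordinate of the multi-index is incremented, discarding the surplus positive terms of $c_{m,j}$ exactly as you suggest. One small caution: your intermediate claim $\frac{m+j_k}{j_k+1}\ge\frac{m}{1}$ is false for $j_k\ge 1$ (the ratio $1+\frac{m-1}{j_k+1}$ is \emph{decreasing} in $j_k$), so the bound must be checked at the worst case $j_k\le n-2$, which is precisely where the hypothesis $n<m$ enters — as you correctly note at the end.
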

\begin{proof} First, it is easy to show that $2{m-1+j \choose j}<{m+j \choose
j+1}$ for $j=0,\ldots, n-2$. Then
\[
\begin{aligned}
2c_{m,j-1}&=2\sum_{j_1+\cdots+j_{\df-1}=j-1}\prod_{k=1}^{\df-1}{m-1+j_k
\choose j_k} \sin(\frac{k\pi}{\df})^{-2j_k}
\\&=
\frac{1}{\df-1}\sum_{\ell=1}^{\df-1}\sum_{j_1+\cdots+j_{\df-1}=j-1}2{m-1+j_\ell
\choose j_\ell}
\sin(\frac{\ell\pi}{\df})^{-2j_\ell}\prod_{k=1,k\neq\ell}^{\df-1}{m-1+j_k
\choose j_k} \sin(\frac{k\pi}{\df})^{-2j_k}
\\&<
\frac{1}{\df-1}\sum_{\ell=1}^{\df-1}\sum_{j_1+\cdots+j_{\df-1}=j-1}{m+j_\ell
\choose j_\ell+1}
\sin(\frac{\ell\pi}{\df})^{-2(j_\ell+1)}\prod_{k=1,k\neq\ell}^{\df-1}{m-1+j_k
\choose j_k} \sin(\frac{k\pi}{\df})^{-2j_k}
\\&=
\frac{1}{\df-1}\sum_{\ell=1}^{\df-1}\sum_{j_1+\cdots+j_{\df-1}=j}{m-1+j_\ell
\choose j_\ell}
\sin(\frac{\ell\pi}{\df})^{-2(j_\ell)}\prod_{k=1,k\neq\ell}^{\df-1}{m-1+j_k
\choose j_k} \sin(\frac{k\pi}{\df})^{-2j_k}
\\& =
\frac{1}{\df-1}\sum_{\ell=1}^{\df-1}\sum_{j_1+\cdots+j_{\df-1}=j}\prod_{k=1}^{\df-1}{m-1+j_k
\choose j_k} \sin(\frac{k\pi}{\df})^{-2j_k}
\\& = c_{m,j}.
\end{aligned}
\]
 \eop
\end{proof}

Due to
$\wh{_{II}\phi}(\xi)=\wh{_I\phi}(\xi)\cdot\wh{\overline{_I\phi}}(-\xi)$
for all $\xi\in\C$, the shifts of the pseudo spline of type II are
linearly independent will implies the shifts of the pseudo spline of
type I are linearly independent as well \cite[Proposition
1.1]{DongShen3}. Moreover, we have the following lemma, whose proof
is also similar to \cite[Lemma 2.1]{DongShen3}

\begin{lemma}\label{lemma:eqivalent.Lin.Ind.}
Let $\phi\in L_2(\R)$ be a compactly supported refinable function
associated with a low-pass filter $a$ by \eqref{refeq}. Then, the
shifts of $\phi$ are linearly independent if and only if
\begin{itemize}
\item[{\rm (1)}] the shifts of $\phi$ are stable;

\item[{\rm (2)}] the symbol $\pa$ of $a$ satisfies: $\{\pa(z e^{\iu 2\pi j/\df}): j=0,\ldots,\df-1\}\neq
0$ for all $z\in\C$.
\end{itemize}
\end{lemma}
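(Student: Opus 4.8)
The plan is to work throughout with the Fourier--Laplace transform \eqref{def:fourierLaplace} and the Jia--Micchelli characterization \eqref{eq:lin.ind.and.stable}, which says that the shifts of $\phi$ are linearly independent (resp.\ stable) precisely when $\hat\phi$ has no \emph{full zero coset} over $\C$ (resp.\ over $\R$); call $\xi\in\C$ \emph{bad} if $\hat\phi(\xi+2\pi k)=0$ for every $k\in\Z$. Since $\phi$ is compactly supported, $\hat\phi$ is entire and continuous on $\C$ with $\hat\phi(0)=1$, and it obeys the refinement identity $\hat\phi(\df\eta)=\wh a(\eta)\hat\phi(\eta)$ of \eqref{refeq:freq}, where $\wh a(\eta)=\pa(e^{-\iu\eta})$ is $2\pi$-periodic by \eqref{Fseq}. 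The one computation underlying everything is this: as $\eta=\xi/\df+2\pi k/\df$ runs over all $k\in\Z$, the point $\df\eta$ runs over $\xi+2\pi\Z$, while $\eta$ runs over the $\df$ residues $\alpha_j:=\xi/\df+2\pi j/\df$, $j=0,\dots,\df-1$; substituting into the refinement identity shows that $\xi$ is bad if and only if, for each $j$, either $\wh a(\alpha_j)=0$ or $\alpha_j$ is itself bad. Here $\{\alpha_0,\dots,\alpha_{\df-1}\}$ is exactly a symbol coset, and $\wh a(\alpha_j)=\pa(z_0\, e^{-\iu 2\pi j/\df})$ with $z_0:=e^{-\iu\xi/\df}$; since $\{e^{-\iu2\pi j/\df}\}_{j}$ is the full set of $\df$-th roots of unity, the statement ``$\wh a$ vanishes at all $\alpha_j$'' is precisely the negation of condition~(2).

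Necessity is then quick. Linear independence already forces stability, i.e.\ condition~(1), because $\R\subset\C$ in \eqref{eq:lin.ind.and.stable}. For condition~(2) I argue the contrapositive: if (2) fails there is $z_0=e^{-\iu\xi_0}\neq0$ with $\pa(z_0 e^{\iu2\pi j/\df})=0$ for all $j$, i.e.\ $\wh a$ vanishes on the full coset $\{\xi_0+2\pi j/\df\}$. Reading the key equivalence in the forward direction then gives $\hat\phi(\df\xi_0+2\pi k)=\wh a(\xi_0+2\pi k/\df)\,\hat\phi(\xi_0+2\pi k/\df)=0$ for every $k$, so $\df\xi_0$ is bad and the shifts of $\phi$ are not linearly independent.

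For sufficiency I assume (1) and (2) and suppose, for contradiction, that a bad point $\xi$ exists. If $\xi$ may be taken real this contradicts stability (1) via \eqref{eq:lin.ind.and.stable}, so I may assume $\operatorname{Im}\xi\neq0$. Now I run a descent: applying the key equivalence to the bad point $\xi$, either all $\wh a(\alpha_j)=0$, in which case condition~(2) fails and we are done, or some $\alpha_{j^\ast}$ is again bad, with $\operatorname{Im}\alpha_{j^\ast}=\operatorname{Im}\xi/\df\neq0$. Iterating produces bad points $\beta_N$ with $\operatorname{Im}\beta_N=\operatorname{Im}\xi/\df^{N}\to0$; taking representatives with $\operatorname{Re}\beta_N\in[0,2\pi)$ and passing to a convergent subsequence $\beta_{N_i}\to\beta_\ast\in\R$, continuity of $\hat\phi$ gives $\hat\phi(\beta_\ast+2\pi k)=\lim_i\hat\phi(\beta_{N_i}+2\pi k)=0$ for each fixed $k$. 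Thus $\beta_\ast$ is a \emph{real} bad point, contradicting stability (1). Hence no bad point exists and the shifts are linearly independent.

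The routine parts are the two necessity claims and the periodicity bookkeeping behind the key equivalence. The step I expect to be the crux is the sufficiency descent: one must check that the dichotomy ``full symbol-coset zero versus a strictly lower bad point'' is exhaustive, and that the compactness limit of the descending bad points lands on a genuine \emph{real} bad point. This is exactly what converts an off-axis failure of linear independence (where $\operatorname{Im}\xi\neq0$) into either a symbol obstruction, namely the failure of (2), or a stability obstruction, namely the failure of (1).
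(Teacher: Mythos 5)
Your proof is correct. Note, however, that the paper itself offers no proof of this lemma: it simply remarks that the proof is ``similar to [DongShen3, Lemma~2.1]'' and moves on. What you have written is essentially the standard argument that underlies that citation, namely the Jia--Micchelli characterization \eqref{eq:lin.ind.and.stable} combined with the refinement identity $\hat\phi(\df\eta)=\wh a(\eta)\hat\phi(\eta)$ to transfer full zero cosets of $\hat\phi$ between scales: necessity by pushing a full symbol-coset zero of $\wh a$ forward to a bad point of $\hat\phi$, and sufficiency by the descent $\operatorname{Im}\beta_N=\operatorname{Im}\xi/\df^N\to 0$ followed by compactness and continuity to land on a real bad point, contradicting stability. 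The dichotomy you isolate (``full symbol-coset zero at level $N$, else a bad point at level $N+1$'') is exhaustive for exactly the reason you state, and the $2\pi$-translation normalization of $\operatorname{Re}\beta_N$ is legitimate since badness is a coset property. Two small points are worth making explicit if this were to be written into the paper: first, the identity $\hat\phi(\df\eta)=\wh a(\eta)\hat\phi(\eta)$ is stated in \eqref{refeq:freq} only for $\eta\in\R$, and you need it for $\eta\in\C$; this follows because $\hat\phi$ is entire (Paley--Wiener, $\phi$ compactly supported) and both sides are entire functions agreeing on $\R$. Second, condition (2) as stated quantifies over all $z\in\C$, whereas the Laurent polynomial $\pa$ is only defined on $\C\setminus\{0\}$; your parametrization $z=e^{-\iu\xi/\df}$ covers exactly $\C\setminus\{0\}$, which is the intended reading. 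Neither is a gap, just bookkeeping to record.
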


For a polynomial $P(z)=c_0+c_1z+\cdots+c_nz^n$ with real
coefficients satisfying $c_n>c_{n-1}>\cdots>c_0>0$, it was shown in
\cite{DongShen3} that all zeros of $P(z)$ are containing inside the
open unit disk $\{z\in\C\,:\,|z|<1\}$.

Using the above facts, we can prove the following result.

\begin{theorem}\label{thm:lin.ind.pseduo.spline}
The shifts of the pseudo spline of type I (real or complex) or type
II are linearly independent.
\end{theorem}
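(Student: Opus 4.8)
The plan is to verify the two conditions of Lemma~\ref{lemma:eqivalent.Lin.Ind.} for each of the three pseudo splines, thereby reducing the problem to checking stability together with the nonvanishing condition on the symbol. Since $\wh{_{II}\phi}(\xi)=\wh{_I\phi}(\xi)\cdot\wh{\overline{_I\phi}}(-\xi)$, by the cited \cite[Proposition~1.1]{DongShen3} it suffices to establish linear independence for the type~II pseudo spline; the type~I cases (real and complex) then follow automatically. So I would first concentrate entirely on $_{II}\phi$.

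For stability of $_{II}\phi$, I would argue that $\wh{_{II}a}(\xi)=h(\sin^2(\xi/2))^m P_{m,n}(\sin^2(\xi/2))$ is a nonnegative trigonometric polynomial whose only zeros on $[0,2\pi)$ come from the factor $h(\sin^2(\xi/2))^m$, i.e.\ at the nontrivial $\df$-th roots of unity, since $P_{m,n}>0$ everywhere by the earlier theorem. A standard argument (Ron's characterization, or the fact that $\hat{\phi}(2\pi k)=\delta_k$ together with no two shifts of $\wh{_{II}\phi}$ vanishing simultaneously) shows the shifts are stable, because the zeros of $\wh{_{II}a}$ are not ``aligned'' across the coset structure in a way that would force $\wh{_{II}\phi}(\xi+2\pi k)=0$ for all $k$ at some real $\xi$.

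The main work, and the step I expect to be the real obstacle, is condition~(2): showing $\{\wh{_{II}a}(\xi+2\pi j/\df):j=0,\ldots,\df-1\}\neq 0$ for all complex $\xi$, equivalently that the symbol and its $\df$ cosets never vanish simultaneously in $\C$. Here I would exploit Lemma~\ref{lemma:increaseCmk}, which gives the strictly increasing coefficient growth $2c_{m,j-1}<c_{m,j}$. Combined with the positivity $c_{m,0}=1>0$, this yields $c_{m,n-1}>c_{m,n-2}>\cdots>c_{m,0}>0$, so by the quoted result from \cite{DongShen3} every root of $P_{m,n}(z)=\sum_j c_{m,j}z^j$ lies strictly inside the open unit disk. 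I would then translate this into a statement about where $P_{m,n}(\sin^2(\xi/2))$ and the factor involving $h$ can vanish in the complex plane: since the roots of $P_{m,n}$ are confined to $|y|<1$ while the values $\sin^2(\xi/2+\pi j/\df)$ over the $\df$ cosets cannot all simultaneously be forced into that root set, no complex $\xi$ can annihilate the whole coset collection.

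The delicate point is handling the complex variable $\xi\in\C$ rather than real $\xi$, since $\sin^2(\xi/2)$ ranges over all of $\C$ and $h(\sin^2(\xi/2))^m$ can vanish at the dilation-related points; I would need to confirm that at any common complex zero of all the cosets one would be forced to have $\sin^2(\df\xi/2)=0$ (via \eqref{eq:hInv}) while simultaneously a coset value must land on a root of $P_{m,n}$, and show these two requirements are incompatible given the disk confinement. Once condition~(2) is secured for $_{II}\phi$, Lemma~\ref{lemma:eqivalent.Lin.Ind.} delivers linear independence for type~II, and the factorization identity propagates it to the real and complex type~I pseudo splines, completing the proof.
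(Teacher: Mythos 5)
Your overall route is the same as the paper's: reduce to type II via the factorization and the cited result of Dong and Shen, check stability, and then verify condition (2) of Lemma~\ref{lemma:eqivalent.Lin.Ind.} by combining Lemma~\ref{lemma:increaseCmk} with the root-location criterion for polynomials with increasing positive coefficients. But the step you yourself flag as ``the real obstacle'' is left unexecuted, and the way you have set it up does not close. You apply the disk-confinement result to $P_{m,n}(y)=\sum_j c_{m,j}y^j$ directly, getting roots in $|y|<1$, and then hope that the coset values $\sin^2(\xi/2+\pi j/\df)$ cannot all land in that disk. They can: for real $\xi$ they all lie in $[0,1]$, and for complex $\xi$ the only available lower bound (writing $e^{-\iu\xi}=\rho e^{\iu\theta}$, $\theta_j=\theta+2\pi j/\df$) is $\max_j|\sin^2(\xi/2+\pi j/\df)|\ge\max_j\tfrac{1}{2}(1-\cos\theta_j)\ge\tfrac{1}{2}$, not $\ge 1$. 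So ``roots in $|y|<1$'' against ``some coset value has modulus at least $\tfrac{1}{2}$'' yields no contradiction. The paper resolves exactly this mismatch by a rescaling: it writes $P_{m,n}\bigl(-\tfrac{(1-z)^2}{4z}\bigr)=\sum_j 2^{-j}c_{m,j}\bigl[-\tfrac{(1-z)^2}{2z}\bigr]^j$ and applies the increasing-coefficient criterion to $\sum_j 2^{-j}c_{m,j}w^j$, whose coefficients increase strictly precisely because Lemma~\ref{lemma:increaseCmk} gives $2c_{m,j-1}<c_{m,j}$, i.e.\ $2^{-(j-1)}c_{m,j-1}<2^{-j}c_{m,j}$. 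Its roots then lie in $|w|<1$, while $\max_j\bigl|-\tfrac{(1-z_j)^2}{2z_j}\bigr|\ge\max_j(1-\cos\theta_j)\ge 1$ since the $\theta_j$ are equally spaced. Equivalently, the roots of $P_{m,n}$ must be confined to $|y|<\tfrac{1}{2}$, not merely $|y|<1$; the factor of two in Lemma~\ref{lemma:increaseCmk} exists exactly to buy this. Without that observation your argument has a genuine gap.

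Two smaller points. Lemma~\ref{lemma:increaseCmk} holds only for $1<n<m$, so the boundary cases need separate treatment (the paper disposes of $n=m$ via the interpolatory property of the mask and of $n=1$ via the B-spline symbol); your proposal does not mention them. Also, your claim that a common complex zero of all cosets would ``force $\sin^2(\df\xi/2)=0$'' while simultaneously a coset value lands on a root of $P_{m,n}$ conflates the two factors of the symbol. The clean statement is that the B-spline factor cannot vanish on all $\df$ cosets at once (if any coset is a nontrivial $\df$-th root of unity then some other coset equals $1$, where both factors are $1$), so a common zero of the symbol across all cosets forces $P_{m,n}$ to vanish at every coset value of $\sin^2$, and it is this that the modulus estimate above rules out.
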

\begin{proof}We only need to show that the shifts of the pseudo
spline of type II are linearly independent.

Fix $m,n\in\N$ with $1\le n \le m$. Let the low-pass filter $a$ be
given as follows:
\[
\hat{a}(\xi) = h(\sin^2(\xi/2))^mP_{m,n}(\sin^2(\xi/2)).
\]
Let  $\phi$ be the pseudo spline of type II associated with mask $a$
by \eqref{refeq}.

For $n=m$, $\phi$ associated with $a$  is an  interpolatory
$\df$-refinable function \cite{HanKwangZhuang}, i.e.,
$\phi(j)=\delta(j), j\in\Z$ and $\hat{\phi}(\df\xi)=\hat
a(\xi)\hat\phi(\xi)$. In this case, the shifts of $\phi$ must be
linearly independent.

For $n=1$, by that the shifts of the B-spline of order $2m$ are
stable and the symbol of $a$ is
$\pa(z)=|1+\cdots+z^{\df-1}|^{2m}/\df^{2m}$, the fact that the
shifts of $\phi$ associated with $a$ are linearly independent
follows by Lemma \ref{lemma:eqivalent.Lin.Ind.}.

 Hence, without loss of
generality, we can assume that $1<n<m$. Since $\hat{a}(\xi) \ge
h(\sin^2(\xi/2))^m$ and the shifts of B-spine of order $2m$ are
stable, by \eqref{eq:lin.ind.and.stable}, the shifts of $\phi$ are
stable.  We need to prove that the symbol of $a$ satisfies item (2)
of Lemma \ref{lemma:eqivalent.Lin.Ind.}. By
$\sin^2(\xi/2)=-\frac{(1-z)^2}{4z}$ with $z=e^{-\iu \xi}$, we have
\[
\begin{aligned}
\pa(z) &=
\left|\frac{1+z+\cdots+z^{\df-1}}{\df}\right|^{2m}P_{m,n}\left(-\frac{(1-z)^2}{4z}\right)
\\&=\left|\frac{1+z+\cdots+z^{\df-1}}{\df}\right|^{2m}\sum_{j=0}^{n-1}c_{m,j}\left[-\frac{(1-z)^2}{4z}\right]^j
\\&
=\left|\frac{1+z+\cdots+z^{\df-1}}{\df}\right|^{2m}\sum_{j=0}^{n-1}2^{-j}c_{m,j}\left[-\frac{(1-z)^2}{2z}\right]^j.
\end{aligned}
\]
Now, item (2) of Lemma \ref{lemma:eqivalent.Lin.Ind.} is equivalent
to saying that there is no $z\in\C$ such that $P(ze^{\iu 2\pi
j/\df}) = 0$ for all $j=0,\ldots, \df-1$, where $P(z) =
\sum_{j=0}^{n-1}2^{-j}c_{m,j}\left[-\frac{(1-z)^2}{2z}\right]^j$.
Suppose not, let $z_0=\rho e^{\iu\theta}\in \C$ be such that
$P(z_j)=0$ for all $j=0,\ldots,\df-1$, where $z_j=\rho
e^{\iu\theta_j}$ with $\theta_j = \theta+2\pi j/\df$ for
$j=0,\ldots,\df-1$. Note that
\[
\left|-\frac{(1-z_j)^2}{2z_j}\right| =
\frac{|1-z_j|^2}{2\rho}=\frac{\rho^2+1}{2\rho}-\cos(\theta_j)\ge
1-\cos(\theta_j), \quad j=0,\ldots,\df-1.
\]
Consequently, $\max_{0\le j\le
\df-1}\left|-\frac{(1-z_j)^2}{2z_j}\right|\ge 1$. This contradicts
to the fact that all roots of the polynomial
$\sum_{j=0}^{n-1}2^{-j}c_{m,j} z^j$ are containing inside the unit
disk $\{z\in\C\,:\, |z|<1\}$ due to its coefficients $2^jc_{m,j},
j=0,\ldots, n-1$ satisfying $2^{-j+1}c_{m,j-1}<2^{-j}c_{m,j}$ for
all $j=1,\ldots,n-1$. We are done.
 \eop
\end{proof}

\section{Matrix Extension with Symmetry}
In this section, we shall introduce the matrix extension problem,
which plays an important role in our construction of tight
$\df$-wavelet frame. In order to state the matrix extension problem
and our main results, let us introduce some notation and definitions
first.

 Let $\pp(z)=\sum_{k\in \Z} p_k z^k,
z\in \C \bs \{0\}$ be a Laurent polynomial with complex coefficients
$p_k\in \C$ for all $k\in \Z$. We say that $\pp$ has {\it symmetry}
if its coefficient sequence $\{p_k\}_{k\in \Z}$ has symmetry; more
precisely, there exist $\gep \in \{-1,1\}$ and $c\in \Z$ such that
\begin{equation}\label{sym:seq}
p_{c-k}=\gep p_k, \qquad \forall\; k\in \Z.
\end{equation}
If $\gep=1$, then $\pp$ is symmetric about the point $c/2$; if
$\gep=-1$, then $\pp$ is antisymmetric about the point $c/2$.
Symmetry of a Laurent polynomial can be conveniently expressed using
a symmetry operator $\sym$ defined by
\begin{equation}\label{sym}
\sym \pp(z):=\frac{\pp(z)}{\pp(1/z)}, \qquad z\in \C \bs \{0\}.
\end{equation}
When $\pp$ is not identically zero,  it is evident that
\eqref{sym:seq} holds if and only if $\sym \pp(z)=\gep z^c$. For the
zero polynomial, it is very natural that $\sym 0$ can be assigned
 any symmetry pattern; that is, for every occurrence
of $\sym 0$ appearing in an identity in this paper, $\sym 0$  is
understood to take an appropriate choice of $\gep z^c$ for some
$\gep\in \{-1,1\}$ and $c\in \Z$ so that the identity holds. If
$\pP$ is an $r\times s$ matrix of Laurent polynomials with symmetry,
then we can apply the operator $\sym$ to each entry of $\pP$, that
is, $\sym \pP$ is an $r\times s$ matrix such that $[\sym
\pP]_{j,k}:=\sym ([\pP]_{j,k})$, where $[\pP]_{j,k}$ denotes the
$(j,k)$-entry of the matrix $\pP$ throughout the chapter.

For two matrices $\pP$ and $\pQ$ of Laurent polynomials with
symmetry, even though all the entries in $\pP$ and $\pQ$ have
symmetry, their sum $\pP+\pQ$, difference $\pP-\pQ$, or product
$\pP\pQ$, if well defined, generally may not have symmetry any more.
This is one of the difficulties for matrix extension with symmetry.
In order for $\pP\pm\pQ$ or $\pP \pQ$ to possess some symmetry, the
symmetry patterns of $\pP$ and $\pQ$ should be compatible. For
example, if $\sym \pP=\sym \pQ$, that is, both $\pP$ and $\pQ$ have
the same symmetry pattern, then indeed $\pP\pm\pQ$ has symmetry and
$\sym(\pP\pm\pQ)=\sym \pP=\sym \pQ$.  In the following, we discuss
the compatibility of symmetry patterns of matrices of Laurent
polynomials. For an $r\times s$ matrix $\pP(z)=\sum_{k\in \Z} P_k
z^k$,  we denote
\begin{equation}\label{Pstar}
\pP^*(z):=\sum_{k\in \Z} P_k^* z^{-k} \quad \mbox{with}
\quad P_k^*:=\ol{P_k}^T,\qquad k\in \Z,
\end{equation}
where $\ol{P_k}^T$ denotes the transpose of the complex conjugate of
the constant matrix $P_k$ in $\C$. We say that \emph{the symmetry of
$\pP$ is compatible} or \emph{$\pP$ has compatible symmetry}, if
\begin{equation}\label{sym:comp}
\sym \pP(z)=(\sym \pth_1)^*(z) \sym \pth_2(z),
\end{equation}
for some $1 \times r$ and $1\times s$ row vectors $\pth_1$ and
$\pth_2$ of Laurent polynomials with symmetry. For an $r\times s$
matrix $\pP$ and an $s\times t$ matrix $\pQ$ of Laurent polynomials,
we say that $(\pP, \pQ)$  \emph{has mutually compatible symmetry} if
%
\begin{equation}\label{sym:mutual}
\sym \pP(z)=(\sym \pth_1)^*(z) \sym \pth(z) \quad \mbox{and} \quad
\sym \pQ(z)=(\sym \pth)^*(z) \sym \pth_2(z)
\end{equation}
for some $1\times r$, $1\times s$,  $1\times t$ row vectors $\pth_1,
\pth, \pth_2$ of Laurent polynomials with symmetry. If $(\pP,\pQ)$
has mutually compatible symmetry as in \eqref{sym:mutual}, then it
is easy to verify that their product $\pP \pQ$ has compatible
symmetry and in fact $\sym (\pP\pQ)= (\sym \pth_1)^*\sym \pth_2 $.

For a matrix of Laurent polynomials, another important property is
the support of its coefficient sequence. For $\pP=\sum_{k\in \Z} P_k
z^k$ such that $P_k={\bf0}$ for all $k\in \Z \bs [m,n]$ with
$P_{m}\ne{\bf0}$ and $P_{n}\ne {\bf0}$, we define its coefficient
support to be $\cs(\pP):=[m,n]$ and the length of its coefficient
support to be $|\cs(\pP)|:=n-m$. In particular, we define
$\cs({\bf0}):=\emptyset$, the empty set, and
$|\cs({\bf0})|:=-\infty$. Also, we  use $\coeff(\pP,k):=P_k$ to
denote the coefficient matrix (vector) $P_k$ of $z^k$ in $\pP$. In
this chapter, ${\bf0}$ always denotes a general zero matrix whose
size can be determined in the context.

The Laurent polynomials that we shall consider in this paper have
their coefficients in a subfield $\F$ of the complex field $\C$. Let
$\F$ denote a subfield of $\C$ such that $\F$ is closed under the
operations of complex conjugate of $\F$ and square roots of positive
numbers in $\F$. In other words, the subfield $\F$ of $\C$ satisfies
the following properties:
\begin{equation}\label{F}
\bar{x}\in \F \quad \hbox{and}\quad \sqrt{y}\in \F, \qquad \forall\;
x,y \in \F\quad \mbox{with}\quad y>0.
\end{equation}
Two particular examples of such subfields $\F$ are $\F=\R$ (the
field of real numbers) and $\F=\C$ (the field of complex numbers). A
nontrivial example other than $\R$ and $\C$ that satisfies \eqref{F}
is the field of all algebraic number, i.e., the algebraic closure
$\overline{\Q}$ of the rational number $\Q$. A subfield of $\R$
given by $\overline{\Q}\cap\R$ also satisfies \eqref{F}.

Now, we introduce the general matrix extension problem with
symmetry. Let  $r$ and $s$ denote two positive integers such that
$1\le r\le s$. Let $\pP$ be an $r\times s$ matrix of Laurent
polynomials with coefficients in $\F$ such that $\pP(z)
\pP^*(z)=I_{r}$ for all $z\in \C \bs \{0\}$ and the symmetry of
$\pP$ is compatible, where $I_r$ denotes the $r\times r$ identity
matrix. The matrix extension problem with symmetry is to find an
$s\times s$ square matrix $\pP_e$ of Laurent polynomials with
coefficients in $\F$ and with symmetry such that $[I_r, \mathbf{0}]
\pP_e=\pP$ (that is, the submatrix of the first $r$ rows of $\pP_e$
is the given matrix $\pP$), the symmetry of $\pP_e$ is compatible,
and $\pP_e(z)\pP_e^*(z)=I_{s}$ for all $z\in \C \bs \{0\}$ (that is,
$\pP_e$ is paraunitary).  Moreover, in many applications, it is
often highly desirable that the coefficient support of $\pP_e$ can
be controlled by that of $\pP$ in some way.


In the context of wavelet analysis, matrix extension without
symmetry has been discussed by Lawton, Lee and Shen in their
interesting paper \cite{LLS} and a simple algorithm has been
proposed there to derive a desired matrix $\pP_e$ from a given row
vector $\pP$ of Laurent polynomials without symmetry. In electronic
engineering, an algorithm using the cascade structure for matrix
extension without symmetry has been given in \cite{V} for filter
banks with perfect reconstruction property. The algorithms in
\cite{LLS,V} mainly deal with the special case that $\pP$ is a row
vector without symmetry and the coefficient support of the derived
matrix $\pP_e$ indeed can be controlled by that of $\pP$.  For
$\F=\R$ and $r=1$, matrix extension with symmetry has been
considered in \cite{P}.

We study the general matrix extension problem with symmetry and have
the following results.

\begin{theorem}\label{thm:main:2}
Let $\pP$ be an $r\times s$ matrix of Laurent polynomials with
coefficients in a subfield $\F$ of $\C$ such that \eqref{F} holds.
Then $\pP(z) \pP^*(z)=I_r$ for all $z\in \C \bs \{0\}$
and the symmetry of $\pP$ is compatible as in \eqref{sym:comp}, if
and only if, there exist $s\times s$ matrices $\pP_0, \ldots,
\pP_{J+1}$ of Laurent polynomials with coefficients in $\F$ such
that
\begin{enumerate}
\item[{\rm(1)}] $\pP_e:= \pP_{J+1}(z) \pP_J(z)\cdots
\pP_1(z)\pP_0(z)$ is paraunitary: $\pP_e(z)\pP_e^*(z)=I_{s}$;

\item[{\rm(2)}] $\pP$ can be represented as a product of  $\pP_0,\ldots,\pP_{J+1}$:
\begin{equation}\label{cascade}
\pP(z)=[I_r,\mathbf{0}] \pP_{J+1}(z) \pP_J(z)\cdots
\pP_1(z)\pP_0(z);
\end{equation}

\item[{\rm(3)}] $\pP_j, 1\le j\le J$ are elementary: $\pP_j(z)\pP_j^*(z)=I_s$ and $\cs(\pP_j)\subseteq[-1,1]$;

\item[{\rm(4)}] $(\pP_{j+1}, \pP_j)$ has mutually compatible symmetry for all $0\le j\le J$;

\item[{\rm(5)}] $\pP_{0}=\pU_{\sym \pth_2}^*$ and $\pP_{J+1}=\mbox{diag}(\pU_{\sym \pth_1}, I_{s-r})$, where $\pU_{\sym \pth_1}$, $\pU_{\sym \pth_2}$ are products of a permutation matrix with a diagonal matrix of monomials, as defined in
\eqref{sym:reorganize};

\item[{\rm(6)}] $J\le \displaystyle \max_{1\le m \le r, 1\le n\le s} \lceil {|\cs([\pP]_{m,n})|}/{2}\rceil$, where $\lceil \cdot \rceil$ is the ceiling function.
\end{enumerate}
\end{theorem}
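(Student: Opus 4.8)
The plan is to treat the two implications separately: the forward direction is a direct verification, while the converse is an inductive factorization whose engine is a support-reduction step. For the ``if'' direction, assume $\pP_0,\dots,\pP_{J+1}$ satisfy (1)--(6). Combining (2) with the paraunitarity of $\pP_e$ in (1) gives
\[
\pP\pP^* = [I_r,\mathbf 0]\,\pP_e\pP_e^*\,[I_r,\mathbf 0]^* = [I_r,\mathbf 0][I_r,\mathbf 0]^* = I_r .
\]
For the symmetry, I would use the fact recorded right after \eqref{sym:mutual}: if $(\pP,\pQ)$ has mutually compatible symmetry then $\pP\pQ$ has compatible symmetry, with the two outer symmetry vectors surviving. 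Telescoping this along the chain of mutually compatible pairs in (4), and inserting the explicit end factors $\pP_0=\pU_{\sym\pth_2}^*$ and $\pP_{J+1}=\diag(\pU_{\sym\pth_1},I_{s-r})$ from (5), yields $\sym\pP=(\sym\pth_1)^*\sym\pth_2$, so the symmetry of $\pP$ is compatible as in \eqref{sym:comp}.

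For the ``only if'' direction I argue by induction on $N:=\max_{m,n}|\cs([\pP]_{m,n})|$. Writing \eqref{sym:comp} entrywise shows $\sym[\pP]_{m,n}=\gep^{(1)}_m\gep^{(2)}_n z^{c^{(2)}_n-c^{(1)}_m}$, i.e.\ each entry is (anti)symmetric about the center $(c^{(2)}_n-c^{(1)}_m)/2$ determined by the row datum $\pth_1$ and column datum $\pth_2$. First I would \emph{normalize}: right-multiplying by the permutation-times-monomial matrix $\pU_{\sym\pth_2}$ of \eqref{sym:reorganize} (so that $\pP_0=\pU_{\sym\pth_2}^*$) aligns the column centers and absorbs the column signs, replacing $\pP$ by $\tilde\pP:=\pP\,\pU_{\sym\pth_2}$ whose symmetry depends only on the rows, $\sym\tilde\pP=(\sym\pth_1)^*\mathbf 1$ with $\mathbf 1$ the all-ones $1\times s$ row. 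Since $\pU_{\sym\pth_2}$ is paraunitary, $\tilde\pP\tilde\pP^*=\pP\pP^*=I_r$, so it suffices to factor a matrix carrying this standardized symmetry; the left factor $\pU_{\sym\pth_1}$ is peeled off at the very end into $\pP_{J+1}$. In the base case $N=0$, $\tilde\pP$ is a constant $r\times s$ matrix $C$ over $\F$ with orthonormal rows ($CC^*=I_r$); I extend these rows to an $s\times s$ unitary matrix with entries in $\F$ by Gram--Schmidt, which stays inside $\F$ precisely because of the closure hypotheses \eqref{F}, take $J=0$, and absorb $C$ into $\pP_{J+1}$.

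The inductive step, which I expect to be the crux of the whole argument, is a support-reduction lemma: if $\tilde\pP$ has the standardized symmetry, $\tilde\pP\tilde\pP^*=I_r$, and $N>0$, then there is an elementary paraunitary factor $\pQ$ with $\cs(\pQ)\subseteq[-1,1]$ and coefficients in $\F$, having mutually compatible symmetry with $\tilde\pP$, such that $\tilde\pP\,\pQ^*$ again carries the standardized symmetry but has strictly smaller support, with $N$ dropping by $2$. The construction uses the paraunitary identity to show that the highest- and lowest-degree coefficient vectors of $\tilde\pP$ are governed by a common low-rank structure, and uses the standardized symmetry to pair these two extreme coefficients, so that a single degree-one symmetric factor $\pQ$ of the prescribed form cancels both extremes at once; this simultaneous cancellation, made possible by symmetry, is exactly what halves the number of factors compared with the non-symmetric reduction of \cite{LLS}. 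Ensuring $\pQ$ has coefficients in $\F$ again relies on \eqref{F}. Iterating the lemma produces $\pP_1,\dots,\pP_J$ (the successive $\pQ$'s), and since each application lowers $N$ by $2$ we reach the constant case after at most $\lceil N/2\rceil=\max_{m,n}\lceil|\cs([\pP]_{m,n})|/2\rceil$ steps, which is exactly the bound (6).

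The two points I anticipate fighting with are, first, preserving \emph{mutual} compatibility of $(\pP_{j+1},\pP_j)$ at every junction rather than mere compatibility of the running product --- this forces each elementary factor $\pQ$ to be chosen with a definite symmetry center matched to the current matrix --- and, second, the case distinctions created by half-integer symmetry centers $(c^{(2)}_n-c^{(1)}_m)/2\notin\Z$, which cannot be removed by a monomial shift and are precisely what the permutation part of $\pU_{\sym\pth}$ in \eqref{sym:reorganize} is designed to absorb.
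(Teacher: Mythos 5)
Your plan follows essentially the same route as the paper: a three-phase factorization (normalize the symmetry pattern with $\pU_{\sym\pth}$ as in \eqref{sym:reorganize}, peel off elementary symmetric paraunitary factors that shorten the coefficient support by two, and finish with a constant unitary extension over $\F$ guaranteed by \eqref{F}), which is exactly the structure of Algorithm~\ref{alg:1} and yields the bound (6); note that the paper itself only carries this out for $r=1$ and defers the general case to \cite{HanZhuangMat}. The one piece you assert rather than construct --- the degree-one factor achieving simultaneous cancellation of both extreme coefficients --- is precisely the content of Lemma~\ref{matrix:Bq} and the explicit matrix $\pB_{\pq_0}$ in \eqref{eq:MatrixForvectorDegBy2}, and your intermediate claim that the normalized symmetry is $(\sym\pth_1)^*\mathbf{1}$ should read $(\sym\pth_1)^*[\mathbf{1}_{n_1},-\mathbf{1}_{n_2},z^{-1}\mathbf{1}_{n_3},-z^{-1}\mathbf{1}_{n_4}]$, since half-integer centers survive the monomial shift (as you yourself note at the end).
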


The proof of Theorem \ref{thm:main:2} is given in
\cite{HanZhuangMat}, in which a general extension algorithm ($r\ge
1$) for the matrix extension with symmetry was also proposed. Since
we are only concerning about the construction of tight $\df$-wavelet
frame in this paper, we shall provide an simple algorithm for the
matrix extension with symmetry for the case $r=1$.

Before we introduce our algorithm, we need some auxiliary results.

For a $1\times n$ row vector $\vf$ in $\F$ such that $\|\vf\|\neq0$,
where $\|\vf\|^2:=\vf\vf^*$, we define $n_{\vf}$ to be the number of
nonzero entries in $\vf$ and $\e_j:=[0,\ldots,0,1,0,\ldots,0]$ to be
the $j$th unit coordinate row vector in $\R^n$. Let $E_{\vf}$ be a
permutation matrix such that $\vf
E_{\vf}=[f_1,\ldots,f_{n_{\vf}},0,\ldots,0]$ with $f_j\neq 0$ for
$j=1,\ldots, n_{\vf}$. We define
\begin{equation}
\label{Vf} V_{\vf}:=\left\{
                      \begin{array}{ll}
                        \frac{\bar{f_1}}{|f_1|}, & \hbox{if $n_{\vf}=1$;} \\
                        \frac{\bar{f_1}}{|f_1|}\left(I_n-\frac{2}{\|v_\vf\|^2}v_\vf^*v_{\vf}\right), & \hbox{if $n_{\vf}>1$,}
                      \end{array}
                    \right.
\end{equation}where
$v_\vf:=\vf-\frac{f_1}{|f_1|}\|\vf\|\e_1$. Observing that
$\|v_\vf\|^2=2\|\vf\|(\|\vf\|-|f_1|)$, we can verify that $V_\vf
V_{\vf}^*=I_n$ and $\vf E_{\vf} V_\vf =\|\vf\|\e_1$. Let
$U_{\vf}:=E_{\vf}V_{\vf}$. Then $U_\vf$ is unitary and satisfies
$U_{\vf}=[\frac{\vf^*}{\|\vf\|},F^*]$ for some $(n-1)\times n$
matrix $F$ in $\F$ such that $\vf U_{\vf}=[\|\vf\|,0,\ldots,0]$. We
also define $U_{\vf}:=I_n$ if $\vf={\bf0}$ and $U_{\vf}:=\emptyset$
if $\vf=\emptyset$. Here, $U_{\vf}$ plays  the role of reducing the
number of nonzero entries in $\vf$.

Let $\pth$ be a $1\times n$ row vector of Laurent polynomials with
symmetry such that $\sym\pth=[\gep_1 z^{c_1}, \ldots, \gep_n
z^{c_n}]$ for some $\gep_1, \ldots, \gep_n\in \{-1,1\}$ and $c_1,
\ldots, c_n\in \Z$.  Then, the symmetry of any entry in the vector
$\pth \mbox{diag}(z^{-\lceil c_1/2\rceil}, \ldots, z^{-\lceil
c_n/2\rceil})$ belongs to $\{ \pm 1, \pm z^{-1}\}$. Thus, there is a
permutation matrix $E_\pth$ to regroup these four types of
symmetries together so that
\begin{equation}\label{sym:reorganize}
\sym(\pth \pU_{\sym\pth})=[\mathbf{1}_{n_1}, -\mathbf{1}_{n_2},
z^{-1} \mathbf{1}_{n_3}, -z^{-1}\mathbf{1}_{n_4}],
\end{equation}
where $\pU_{\sym\pth}:= \mbox{diag}(z^{-\lceil c_1/2\rceil}, \ldots,
z^{-\lceil c_n/2\rceil})E_\pth$, $\mathbf{1}_m$ denotes the $1\times
m$ row vector $[1,\ldots,1]$,  and  $n_1, \ldots, n_4$ are
nonnegative integers uniquely determined by $\sym\pth$. Note that
$\pU_{\sym\pth}$ do not increase the length of the coefficient
support of $\pth$.

Let $\pq$ be a $1\times s$ row vector of Laurent polynomial
satisfying $\pq\pq^*=1$ and $\sym \pq=
[\mathbf{1}_{s_1},-\mathbf{1}_{ s_2}, z^{-1} \mathbf{1}_{
s_3},-z^{-1}\mathbf{1}_{ s_4}]$ for some nonnegative integers
$s_1,\ldots,s_4$ such that $s_1+s_2+s_3+s_4=s$. Then, $\pq$ must
take the form in  \eqref{eq:type1} or \eqref{eq:type2} with
$\vf_1\neq {\bf0}$ as follows:
\begin{equation}\label{eq:type1}
\begin{small}
\begin{aligned}
\pq &=[ \vf_1, -\vf_2,  \vg_1, -\vg_2]z^{\ell_1} +
[\vf_3,-\vf_4,\vg_3,-\vg_4 ]z^{\ell_1+1}
+\sum_{\ell=\ell_1+2}^{\ell_2-2}\coeff(\pq,\ell)z^\ell
\\&+ [\vf_3,\vf_4,\vg_1,\vg_2]z^{\ell_2-1} +
[\vf_1,\vf_2,\textbf{0},{\bf0}]z^{\ell_2};
\end{aligned}
\end{small}
\end{equation}
\begin{equation}\label{eq:type2}
\begin{small}
\begin{aligned}
 \pq
&=[{\bf0},{\bf0},\vf_1,- \vf_2]z^{\ell_1} + [ \vg_1,- \vg_2,\vf_3,-
\vf_4]z^{\ell_1+1} +\sum_{\ell=\ell_1+2}^{\ell_2-2}\coeff(\pq
,\ell)z^\ell \qquad\!\\&+ [ \vg_3, \vg_4, \vf_3,\vf_4]z^{\ell_2-1}+
[ \vg_1, \vg_2, \vf_1, \vf_2]z^{\ell_2}.
\end{aligned}
\end{small}
\end{equation}

If $\pq$ takes the form in \eqref{eq:type2}, we further construct a
permutation matrix $E_{\pq}$ so that $[ \vg_1, \vg_2, \vf_1,
\vf_2]E_{\pq}=[\vf_1, \vf_2,\vg_1,\vg_2]$ and  define
$\pU_{\pq,\gep}:= E_{\pq}\diag(I_{s-s_{\vg}},z^{-1} I_{s_{\vg}})$,
where $s_{\vg}$ is the size of the row vector $[\vg_1,\vg_2]$. Then
$\pq \pU_{\pq}$ takes the form in \eqref{eq:type1}.  For $\pq
E_{\gep}$ of form \eqref{eq:type1}, we simply let $\pU_{\pq}:=I_s$.
In this way, $\pq_0:=\pq\pU_{\pq}$ always takes the form in
\eqref{eq:type1} with $\vf_1\neq{\bf0}$.

Note that $\pU_{\pq}\pU_{\pq}^*=I_s$ and $\|\vf_1\|=\|\vf_2\|$ if
$\pq_0\pq_0^*=1$. Now  an $s\times s$ paraunitary matrix
$\pB_{\pq_0}$ to reduce the coefficient support of $\pq_0$  as in
\eqref{eq:type1} with $\vf_1\neq {\bf0}$ from $[\ell_1,\ell_2]$ to
$[\ell_1+1,\ell_2-1]$ is given by:

\begin{equation}\label{eq:MatrixForvectorDegBy2}
\pB_{\pq_0}^*:=\frac{1}{c}\left[
 \begin{array}{c|c|c|c}
 \vf_1(z+\frac{{c_0}}{c_{\vf_1}}+\frac1z)& \vf_2(z-\frac1z) & \vg_1(1+\frac1z) & \vg_2(1-\frac1z) \\
c F_1&  {\bf 0} &{\bf 0} &{\bf 0}\\
\hline
\noalign{\vspace{-0.1in}}&&&\\
-\vf_1(z-\frac1z) & -\vf_2(z-\frac{{c_0}}{c_{\vf_1}}+\frac1z) & -\vg_1(1-\frac1z) & -\vg_2(1+\frac1z) \\
{\bf 0}& c F_2  &{\bf 0} &{\bf 0}\\
\hline
\noalign{\vspace{-0.1in}}&&&\\
\frac{c_{\vg_1}}{c_{\vf_1}}\vf_1(1+z) & -\frac{c_{\vg_1}}{c_{\vf_1}}\vf_2(1-z) & c_{\vg_1'}\vg_1' & {\bf 0}\\
{\bf 0}&  {\bf 0} & c G_1&{\bf 0}\\
\hline
\noalign{\vspace{-0.1in}}&&&\\
\frac{c_{\vg_2}}{c_{\vf_1}}\vf_1(1-z) & -\frac{c_{\vg_2}}{c_{\vf_1}}\vf_2(1+z) & {\bf 0} & c_{\vg_2'} \vg_2'\\
{\bf 0}&  {\bf 0} &{\bf 0} &c G_2\\
\end{array}
\right],
\end{equation}
where $c_{\vf_1}:=\|\vf_1\|$,  $c_{\vg_1}:=\|\vg_1\|$, $
c_{\vg_2}:=\|\vg_2\|$, $c_0:=\coeff(\pq_0,\ell_1+1)\coeff(\pq_0^*,-\ell_2)/c_{\vf_1}$,
\begin{equation}
\begin{small}
\begin{aligned}
&c_{\vg_1'}:=
\begin{cases} \frac{-2c_{\vf_1}-\overline{c_0}}{c_{\vg_1}} &\text{if $\vg_1\neq{\bf0}$;}\\ c &\text{otherwise,}\end{cases}
\qquad
c_{\vg_2'}:=\begin{cases} \frac{2c_{\vf_1}-\overline{c_0}}{c_{\vg_2}} &\text{if $\vg_2\neq {\bf0}$;}\\
c &\text{otherwise,}
\end{cases}\\
&c:=(4c_{\vf_1}^2+2c_{\vg_1}^2+2c_{\vg_2}^2+|c_0|^2)^{1/2},
\end{aligned}
\end{small}
\end{equation}
and $[\frac{\vf_j^*}{\|\vf_j\|},F_j^*]=U_{\vf_j}$,
$[\vg_j'^*,G_j^*]=U_{\vg_j}$ are unitary constant extension matrices
in $\F$ for vectors $\vf_j,\vg_j$ in $\F$, for $j=1,2$,
respectively. The operations for the emptyset $\emptyset$ are
defined by $\|\emptyset\|=\emptyset$, $\emptyset+A=A$ and
$\emptyset\cdot A=\emptyset$ for any object $A$.

\begin{lemma}
\label{matrix:Bq} Let $\pq$ be a $1\times s$ row vector of Laurent
polynomial satisfying $\pq\pq^*=1$, $|\cs(\pq)|>2$, and $\sym
\pq=[\mathbf{1}_{s_1},-\mathbf{1}_{ s_2}, z^{-1} \mathbf{1}_{
s_3},-z^{-1}\mathbf{1}_{ s_4}]$ for some  nonnegative integers
$s_1,\ldots,s_4$ such that $s_1+s_2+s_3+s_4=s$. Let
$\pB_{\pq}:=\pU_{\pq}\pB_{\pq_0}\pU_{\pq}^*$ with $\pU_{\pq}$ and
$\pB_{\pq_0}$  being constructed as above. Then,
 $\sym\pB_{\pq}=[\mathbf{1}_{s_1},-\mathbf{1}_{ s_2},
z\mathbf{1}_{ s_3},-z\mathbf{1}_{
s_4}]^T[\mathbf{1}_{s_1},-\mathbf{1}_{ s_2}, z^{-1} \mathbf{1}_{
s_3},-z^{-1}\mathbf{1}_{ s_4}]$, $\cs({\pB}_\pq)=[-1,1]$, and
$\cs(\pq\pB_{\pq})=[\ell_1+1,\ell_2-1]$. That is, $\pB_\pq$ has
compatible symmetry with coefficient support on $[-1,1]$ and
$\pB_{\pq}$ reduces the length of the coefficient support of $\pq$
exactly by $2$. Moreover, $\sym(\pq\pB_{\pq})=\sym\pq$.
\end{lemma}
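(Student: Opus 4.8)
The plan is to push every claim through the explicit matrix $\pB_{\pq_0}$ of \eqref{eq:MatrixForvectorDegBy2} and then transfer the conclusions across the unitary monomial-permutation factor $\pU_{\pq}$. Recall that $\pU_{\pq}\pU_{\pq}^*=I_s$ and that $\pq_0=\pq\pU_{\pq}$ has the canonical form \eqref{eq:type1} with $\vf_1\neq{\bf0}$ and $\|\vf_1\|=\|\vf_2\|$; since $\pB_{\pq}=\pU_{\pq}\pB_{\pq_0}\pU_{\pq}^*$ we have $\pq\pB_{\pq}=\pq_0\pB_{\pq_0}\pU_{\pq}^*$. So I would first prove the three statements for $\pB_{\pq_0}$ acting on $\pq_0$ (paraunitarity, support reduction, outer-product symmetry) and then carry them over to $\pB_{\pq}$. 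When $\pq$ already has the form \eqref{eq:type1} one has $\pU_{\pq}=I_s$ and $\pB_{\pq}=\pB_{\pq_0}$, so the genuine content is the form \eqref{eq:type2} case.

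First I would check that $\pB_{\pq_0}$ is paraunitary, $\pB_{\pq_0}(z)\pB_{\pq_0}^*(z)=I_s$. This is a block computation on \eqref{eq:MatrixForvectorDegBy2}: multiplying the four block-rows against the four block-columns and collecting powers of $z$, the nonconstant and off-diagonal contributions must cancel and the constant diagonal blocks must sum to $c^2I_s$. The cancellations use the unitarity of the constant extensions, namely $\frac{\vf_j^*\vf_j}{\|\vf_j\|^2}+F_j^*F_j=I$ and $\vg_j'^*\vg_j'+G_j^*G_j=I$ coming from $U_{\vf_j}U_{\vf_j}^*=I$ and $U_{\vg_j}U_{\vg_j}^*=I$, together with $\|\vf_1\|=\|\vf_2\|$ and the precise tuning of $c_0$, $c_{\vg_1'}$, $c_{\vg_2'}$ and $c$. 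I would run the whole computation under the emptyset conventions $\|\emptyset\|=\emptyset$, $\emptyset+A=A$, $\emptyset\cdot A=\emptyset$, so that the degenerate cases $\vg_1={\bf0}$ or $\vg_2={\bf0}$ are absorbed into the same formulas.

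Next, $\cs(\pB_{\pq_0})=[-1,1]$ is read off directly, since every entry of \eqref{eq:MatrixForvectorDegBy2} is supported in $\{z^{-1},1,z\}$. For the support reduction I would compute the coefficients of $\pq_0\pB_{\pq_0}$ at the four outer powers $z^{\ell_1-1}$, $z^{\ell_1}$, $z^{\ell_2}$, $z^{\ell_2+1}$ and show each vanishes, which forces $\cs(\pq_0\pB_{\pq_0})=[\ell_1+1,\ell_2-1]$; the matrix is built precisely so that multiplying $\pq_0$ of the form \eqref{eq:type1} on the right annihilates the $z^{\ell_1}$ and $z^{\ell_2}$ terms and creates no new extreme terms, again invoking $\|\vf_1\|=\|\vf_2\|$ and the constants $c_{\vf_1},c_{\vg_1},c_{\vg_2},c_0$. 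In parallel I would apply $\sym$ to each entry of $\pB_{\pq_0}$ (for instance $\sym(z+c_0/c_{\vf_1}+1/z)=1$, $\sym(z-1/z)=-1$, $\sym(1+1/z)=z^{-1}$, $\sym(1-1/z)=-z^{-1}$, with the constant and monomial entries handled likewise) and collect these into the claimed outer-product matrix, which is exactly the statement that $\pB_{\pq_0}$ has compatible symmetry.

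Finally I would transfer to $\pB_{\pq}=\pU_{\pq}\pB_{\pq_0}\pU_{\pq}^*$. Paraunitarity is immediate from that of $\pU_{\pq}$; $\cs(\pB_{\pq})=[-1,1]$ follows from a short check that the monomial shift $\diag(I_{s-s_{\vg}},z^{-1}I_{s_{\vg}})$ and the permutation $E_{\pq}$ only multiply entries of $\pB_{\pq_0}$ whose powers compensate the shift, and the stated $\sym\pB_{\pq}$ is obtained by conjugating the outer-product symmetry of $\pB_{\pq_0}$ by the known symmetry of $\pU_{\pq}$; the support identity $\cs(\pq\pB_{\pq})=[\ell_1+1,\ell_2-1]$ then follows from $\pq\pB_{\pq}=\pq_0\pB_{\pq_0}\pU_{\pq}^*$ after accounting for the shift. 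The last claim $\sym(\pq\pB_{\pq})=\sym\pq$ then follows from the product rule for mutually compatible symmetry recorded after \eqref{sym:mutual}: with $\pth$ the given symmetry row we have $\sym\pq=(\sym\pth_1)^*\sym\pth$ for the scalar $\pth_1$ with $\sym\pth_1=1$, and $\sym\pB_{\pq}=(\sym\pth)^*\sym\pth$, whence $\sym(\pq\pB_{\pq})=(\sym\pth_1)^*\sym\pth=\sym\pq$. The main obstacle is the bookkeeping in the paraunitarity and support-reduction computations: every cross term has to cancel, which forces each of the carefully chosen constants $c_0,c_{\vg_1'},c_{\vg_2'},c$ and each unitarity relation for $U_{\vf_j},U_{\vg_j}$ to be used at exactly the right spot, with the emptyset cases verified separately.
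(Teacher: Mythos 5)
Your proposal follows essentially the same route as the paper: the paper's own proof is a terse ``direct computation'' that the coefficients of $\pq\pB_{\pq}$ at $k\in\{\ell_1-1,\ell_1,\ell_2,\ell_2+1\}$ vanish, with the remaining claims (paraunitarity, $\cs(\pB_{\pq})=[-1,1]$, the outer-product symmetry pattern, and $\sym(\pq\pB_{\pq})=\sym\pq$) declared to follow from the construction; you simply spell out the same block computations and the transfer across $\pU_{\pq}$ in more detail, including the correct use of the mutual-compatibility product rule for the last claim.

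One small but genuine gap: you assert that the vanishing of the four outer coefficients ``forces'' $\cs(\pq_0\pB_{\pq_0})=[\ell_1+1,\ell_2-1]$. Vanishing alone only yields the containment $\cs(\pq_0\pB_{\pq_0})\subseteq[\ell_1+1,\ell_2-1]$; to get equality --- and hence the statement that the support length is reduced \emph{exactly} by $2$, which is what guarantees the termination bound on $J$ in Theorem~\ref{thm:main:2}(6) --- you must also verify that $\coeff(\pq\pB_{\pq},k)\neq 0$ for $k\in\{\ell_1+1,\ell_2-1\}$. The paper states this non-vanishing explicitly as a separate step of the computation; you should add it (it again comes down to evaluating the relevant blocks of \eqref{eq:MatrixForvectorDegBy2} against the form \eqref{eq:type1} and using $\vf_1\neq{\bf0}$).
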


\begin{proof} Direct computation shows that
$\coeff(\pq\pB,k)=0$ for $k\in\{\ell_1-1,\ell_1,\ell_2+1,\ell_2\}$.
One can also show that   $\coeff(\pq\pB,k)\neq0$ for
$k\in\{\ell_1+1,\ell_2-1\}$. Hence
$\cs(\pq\pB_{\pq})=[\ell_1+1,\ell_2-1]$.  The other parts of the
Lemma  follows from our construction. \eop
\end{proof}

Now, it is easy to prove  Theorem \ref{thm:main:2} for $r=1$ using
Lemma \ref{matrix:Bq}. Moreover,  we can have a constructive
algorithm to derive $\pP_e$ from a given column $\pp$, which mainly
has three steps: initialization, support reduction, and
finalization. The step of initialization reduces the symmetry
pattern of $\pp$ to a standard form. The step of support reduction
is the main body of the algorithm, producing a sequence of
elementary matrices $\pA_1, \ldots, \pA_J$ that reduce the length of
the coefficient support of $\pp$ to $0$. The step of finalization
generates the desired matrix $\pP_e$ as in Theorem~\ref{thm:main:2}.
More precisely, our algorithm written in the form of
\emph{pseudo-code} is as follows:

\begin{algorithm} \label{alg:1} {
Input $\pp$, which is a $1\times s$ vector of Laurent polynomial
with symmetry satisfying $\pp\pp^*=1$.
\\
\noindent \textbf{\rm 1. Initialization:} Let  $\pq :=
\pp\pU_{\sym\pp}$. Then $\sym \pq=[{\bf1}_{ s_1},-{\bf1}_{ s_2},
z^{-1}{\bf1}_{ s_3},-z^{-1}{\bf1}_{ s_4}]$, where all nonnegative
integers $r_1,\ldots, r_4, s_1, \ldots, s_4$ are uniquely determined
by $\sym \pp$.
\\
\noindent \textbf{\rm 2. Support Reduction:} Let
$\pP_0:=\pU_{\sym\pp}^*$ and $J:=1$.
\begin{tabbing}
\hspace*{0.13in}\=\hspace{2ex}\=\hspace{2ex}\=\hspace{2ex}\=\hspace{2ex}\=\hspace{2ex}\=\hspace{2ex}\kill
\> {\rm \texttt{while}} $(|\cs(\pq)|>0)$ {\rm\texttt{do}}\\
\> \> {\rm\texttt{if}}  $|\cs(\pq)|>1$ {\rm\texttt{then}}\\
\>\>\>$\pq:=\pq\pB_{\pq}$, $\pA_J:=\pB_{\pq}$, and $\pP_J:=\pA_J^*$. \\
\> \> {\rm\texttt{else}}\\
\>\>\> $\pq$ is of the form: $ \pq =[ {\bf0}, {\bf0},  \vg_1,
-\vg_2]z^{-1} + [\vf_3,{\bf0},\vg_1,
\vg_2]; $ Let\\
\>\>\> \qquad$ \pB_J:= \left[
   \begin{array}{c|cc|cc}
     I_{s_1+s_2}&  &  &  &  \\
     \hline
     & \frac{\vg_1^*(1+z)}{2\|\vg_1\|} & G_1 & \frac{\vg_1^*(1-z)}{2\|\vg_1\|} & {\bf0} \\
     \hline
     & \frac{\vg_2^*(1-z)}{2\|\vg_1\|} & {\bf0} & \frac{\vg_2^*(1+z)}{2\|\vg_1\|} & G_2\\
   \end{array}
 \right]$,\\
 \>\>\>
 where $[\frac{\vg_j^*}{\|\vg_j\|}, G_j] = U_{\vg_j},
 j=1,2$.  $\pB_J$ is paraunitary, $|\cs(\pq\pB_J)|=0$, and\\
\>\>\>\qquad\qquad$\sym(\pq\pB_J)=[{\bf1}_{ s_1},-{\bf1}_{ s_2},
1,z^{-1}{\bf1}_{ s_3-1},-1,-z^{-1}{\bf1}_{ s_4-1}]$.\\
\>\>\> Let $E$ be a permutation matrix such that\\
 \>\>\>\qquad\qquad
$\sym(\pq\pB_J)E=[{\bf1}_{ s_1+1},-{\bf1}_{ s_2+1}, z^{-1}{\bf1}_{
s_3-1},-z^{-1}{\bf1}_{ s_4-1}]$.\\
\>\>\> Let $\pA_J:=\pB_J E$, $\pP_J:=\pA_J^*$, and $\pq=\pq\pA_J$.
\\
\> \> {\rm\texttt{end if}}\\
\>\> $ J := J+1$\\
\> {\rm\texttt{end while}}\\
\end{tabbing}
\noindent \textbf{\rm 3. Finalization:} $\pq =
[\vf,{\bf0},{\bf0},{\bf0}]$ for some $1\times s_1'$  constant
vectors $\vf$ in $\F$. Let $U:=\diag(U_{\vf},I_{s-s_1'})$. Define
$\pP_{J}:=U_{\vf}^*$.
\\
\noindent Output a desired matrix $\pP_e$
satisfying all the properties in Theorem~\ref{thm:main:2} for $r=1$.
}\end{algorithm}

\section{Symmetric Complex Tight $\df$-frame via Matrix Extension}

In this section, we shall discuss the application of our results on
matrix extension with symmetry to $\df$-band symmetric paraunitary
filter banks in electronic engineering and to orthonormal
multiwavelets with symmetry in wavelet analysis. In order to do so,
let us introduce some definitions first.

Let $\F$ be a subfield of $\C$ such that \eqref{F} holds. Let $\phi$
be a compactly supported $\df$-refinable function  in $L_2(\R)$
associated with a low-pass filter $a_0:\Z\mapsto\F$.  Recall that
its symbol is defined to be $\pa_0(z):=\sum_{k\in \Z} \ta_0(k) z^k$,
which is a Laurent polynomials with coefficients in $\F$. We denote
its \emph{$\df$-band subsymbols} by $\pa_{0; \gamma}(z):=\sqrt{\df}
\sum_{k\in\Z} \ta_0(\gamma+\df k) z^k$, $\gamma=0,\ldots,\df-1$. To
construct a tight $\df$-wavelet frame from $\phi$,  one has to
design high-pass filters $\ta^1, \ldots, \ta^L: \Z \mapsto
\F^{\mphi\times \mphi}$ such that \eqref{UEP} holds. Let $\PP(z)$ be
the polyphase matrix defined by
\begin{equation}\label{polyphase}
\PP(z):=\left[ \begin{matrix} \pa_{0;0}(z) &\cdots &\pa_{0; \df-1}(z)\\
\pa_{1;0}(z) &\cdots &\pa_{1; \df-1}(z)\\
\vdots &\vdots &\vdots\\
\pa_{L;0}(z) &\cdots &\pa_{L; \df-1}(z)
\end{matrix}\right].
\end{equation}
Then it is easily seen that \eqref{UEP} is equivalent to
\begin{equation}\label{UEPpolyphase}
\PP(z)^* \PP(z)=I_{\df},
\end{equation}
where each $\pa_{m;\gamma}$ is a subsymbol of $a^m$ for
$m=1,\ldots,L; \gamma=0,\ldots,\df-1$, respectively.

Symmetry of the filters is a very much desirable property in many
applications. We say that the low-pass filter $\pa_0$ (or $\ta_0$)
has symmetry if
\begin{equation}\label{mask:sym}
\pa_0(z)= z^{(\df-1)c_0} \pa_0(1/z)
\end{equation}
for some  $c_0\in \R$ such that $(\df-1)c_0\in\Z$. If $\pa_0$ has
symmetry as in \eqref{mask:sym} and if $1$ is a simple eigenvalue of
$\pa_0(1)$, then it is well known that the $\df$-refinable function
$\phi$ associated with the low-pass filter $\pa_0$ has symmetry:
$\phi(c_0-\cdot)= \phi$.

Under the symmetry condition in \eqref{mask:sym},
\begin{equation}\label{eq:polyPhaseSymCondition}
\pa_{0;\gamma}(z)=z^{R_\gamma}\pa_{0;{Q_\gamma}}(z^{-1})\,\quad
\gamma=0,\ldots,\df-1,
\end{equation}
where $\gamma,Q_\gamma\in \Gamma:=\{0,\ldots,\df-1\}$ and
$R_\gamma$, $Q_\gamma$ are uniquely determined by
\begin{equation}\label{RQ}
(\df-1) c_0-\gamma=\df R_\gamma+Q_\gamma \quad \mbox{with} \quad
R_\gamma\in \Z, \; Q_\gamma\in\Gamma.
\end{equation}
Now, we can easily deduce symmetric Laurent polynomial via
\eqref{eq:polyPhaseSymCondition}. In fact, let
\begin{equation}\label{eq:symmetrization}
\pb_{0;\gamma}(z):=
\begin{cases}
      \pa_{0;\gamma}(z), & \hbox{$\gamma=Q_\gamma;$} \\
       \frac{1}{\sqrt{2}}(\pa_{0;\gamma}(z)+ z^{\kappa_{\gamma}}\pa_{0;{Q_\gamma}}(z)), & \hbox{$\gamma<Q_\gamma$;}\\
       \frac{1}{\sqrt{2}}(\pa_{0;\gamma}(z)- z^{\kappa_{\gamma}}\pa_{0;{Q_\gamma}}(z)), & \hbox{$\gamma>Q_\gamma$,}\\
\end{cases}
\end{equation}
where $\gamma=0,\ldots,\df-1$ and each $\kappa_\gamma$ is an integer
so that the length of the coefficient support of $\pa_{0;\gamma}(z)+
z^{\kappa_{\gamma}}\pa_{0;{Q_\gamma}}(z)$ is minimal. Then, one can
show that each $\pb_{0;\gamma}(z)$ is of a Laurent polynomial with
symmetry. Note that the transform matrix $\pU$ with respect to
\eqref{eq:symmetrization} is paraunitary.

Next, we show that we can construct a vector of Laurent polynomial
with symmetry from a low-pass filter $a$ for the complex pseudo
spline of type I with order $(m,n)$ to which Algorithm \ref{alg:1}
is applicable.

Fixed $m,n\in\N$ such that $1\le 2n-1\le m$. For simplicity, let
$a_0=a$ and $\pa_0$ be its symbol. From \eqref{eq:symmetrization},
we can construct a $1\times \df$ vector of Laurent polynomial
$\pp(z)=[\pb_{0;0}(z),\ldots,\pb_{0;\df-1}(z)]$ from $\pa_0$.
However, $\pp\pp^*\neq 1$ when $n<m$. To apply our matrix extension
algorithm, we need to append extra entries to $\pp$.
 It is easy to show that
\[
1-\sum_{j=0}^{\df-1}|\wh a_0(\xi+2j\pi/\df)|^2 =
1-\sum_{\gamma=0}^{\df-1}\pa_{0;\gamma}(z^\df)\pa_{0;\gamma}^*(z^\df),
\quad z=e^{-\iu\xi},
\]
where $\pa_{0;\gamma}, \gamma=0,\ldots,\df-1$ are the subsymbols of
$a_0$.  By Corollary \ref{cor:maskRemainder}, we have
\[
1-\sum_{j=0}^{\df-1}|\wh a_0(\xi+2j\pi/\df)|^2=|\wh b_0(\df\xi)|^2.
\]
for some $2\pi$-periodic trigonometric function $\wh b_0$ with real
coefficients. Hence, we can construct a Laurent polynomial
$\pa_{0;\df}(z)$ from $\wh b_0$ such that $\pa_{0;\df}(e^{-\iu\xi})
= \wh b_0(\xi)$. Then, the vector of Laurent polynomials
$\pq(z)=[\pa_{0;0}(z),\ldots,\pa_{0;\df-1}(z),\pa_{0;\df}(z)]$
satisfies $\pq\pq^*=1$.

For $m=2n$, by Corollary \ref{cor:maskRemainder}, $| {\wh
b_0}(\xi)|^2 = c_{2n,2n-1}[\sin^2(\xi/2)/\df^2]^{2n-1}$. W
$\pa_{0;\df}(z)$ can be constructed explicitly  as follows:
\[
\pa_{0;\df}(z) =
\sqrt{c_{2n,2n-1}}\left(\frac{2-z-1/z}{4\df^2}\right)^{n-1}\frac{1-z}{2\df}.
\]
Then $\pa_{0;\df}(z)$ is symmetric, let
$\pb_{0;\df}(z)=\pa_{0;\df}(z)$. Then
$\pp:=[\pb_{0;0},\ldots,\pb_{0;\df}]$ is a $1\times (\df+1)$ vector
of Laurent polynomials with symmetry satisfying $\pp\pp^*=1$.

 If $m\neq 2n$,
$\pa_{0;\df}(z)$ is a not necessary symmetric, we can further let
$\pb_{0;\df}(z) =(\pa_{0;\df}(z)+\pa_{0;\df}(1/z))/2$ and
$\pb_{0;\df+1}(z) =(\pa_{0;\df}(z)-\pa_{0;\df}(1/z))/2$. In this
way, $\pp:=[\pb_{0;0},\ldots,\pb_{0;\df},\pb_{0;\df+1}]$ is a
$1\times (\df+2)$ vector of Laurent polynomials with symmetry
satisfying $\pp\pp^*=1$.

Consequently, we can summerize the above result as follows:
\begin{theorem}\label{thm:symPa}
Let $m,n\in\N$ be such that $1\le n\le m$. Let $a_0$ be the low-pass
filter for the complex pseudo spline of order $(m,n)$ defined in
\eqref{def:Amn} and $\pa_0(z)$ be its symbol. Then, one can derive
Laurent polynomials $\pa_{0;\df}(z),\ldots,\pa_{0;L}(z), \df\le L\le
\df+1$ such that
$\pp_{\pa_0}:=[\pa_{0;0},\ldots,\pa_{0;\df-1},\cdots,\pa_{0;L}(z)]$
satisfies $\pp_{\pa_0}\pp_{\pa_0}^*=1$, where $\pa_{0;0}, \ldots,
\pa_{0; \df-1}$ are subsymbols of  $a_0$. Moreover, one can
construct a  $(L+1) \times (L+1) $ paraunitary matrix $\pU$ such
that $\pp_{\pa_0}\pU$ is  a vector of Laurent polynomial with
symmetry.
In particular, for $m=2n$,
$\pa_{0;\df}(z)=\sqrt{c_{2n,2n-1}}\left(\frac{2-z-1/z}{4\df^2}\right)^{n-1}\frac{1-z}{2}$
and $L=\df$.
\end{theorem}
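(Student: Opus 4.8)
I would read Theorem~\ref{thm:symPa} as the formalization of the construction outlined just above it, and organize the argument into three stages: producing the extra polyphase entries so that $\pp_{\pa_0}\pp_{\pa_0}^*=1$, building the paraunitary symmetrizer $\pU$, and pinning down the explicit entry in the case $m=2n$.

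First I would record the standard polyphase identity. With $z=e^{-\iu\xi}$ and $\zeta:=e^{2\pi\iu/\df}$, the splitting $\pa_0(z)=\df^{-1/2}\sum_{\gamma=0}^{\df-1}z^{\gamma}\pa_{0;\gamma}(z^{\df})$ together with $\sum_{j=0}^{\df-1}\zeta^{-j(\gamma-\gamma')}=\df\,\delta_{\gamma,\gamma'}$ yields
\[
\sum_{j=0}^{\df-1}|\wh{a_0}(\xi+2\pi j/\df)|^2=\sum_{\gamma=0}^{\df-1}\pa_{0;\gamma}(z^{\df})\pa_{0;\gamma}^*(z^{\df}).
\]
By Corollary~\ref{cor:maskRemainder} the left-hand side equals $1-|\wh{b_0}(\df\xi)|^2$ for a trigonometric polynomial $\wh{b_0}$ with real coefficients. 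Defining $\pa_{0;\df}$ by $\pa_{0;\df}(e^{-\iu\xi})=\wh{b_0}(\xi)$, a Laurent polynomial with real coefficients, we have $\pa_{0;\df}(z^{\df})\pa_{0;\df}^*(z^{\df})=|\wh{b_0}(\df\xi)|^2$, so the displayed identity becomes $\sum_{\gamma=0}^{\df}\pa_{0;\gamma}(w)\pa_{0;\gamma}^*(w)=1$ after the substitution $w=z^{\df}$, an identity of Laurent polynomials. This already produces a vector with $\pp_{\pa_0}\pp_{\pa_0}^*=1$ and $L=\df$.

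Next I would use the symmetry of $a_0$. From \eqref{eq:symAmn} (equivalently \eqref{mask:sym}) one gets the subsymbol relation \eqref{eq:polyPhaseSymCondition}, so the first $\df$ subsymbols pair off under the involution $\gamma\leftrightarrow Q_\gamma$ of \eqref{RQ}. Applying the symmetrization \eqref{eq:symmetrization}, whose transform matrix $\pU_0$ on the first $\df$ coordinates is paraunitary (as noted after \eqref{eq:symmetrization}), sends each $\pa_{0;\gamma}$ to a Laurent polynomial with symmetry, fixing the diagonal indices $\gamma=Q_\gamma$. It then remains to give the residual entry symmetry. Since $\pa_{0;\df}$ has real coefficients, its parts $\pb_{0;\df}:=(\pa_{0;\df}+\pa_{0;\df}^*)/2$ and $\pb_{0;\df+1}:=(\pa_{0;\df}-\pa_{0;\df}^*)/2$ satisfy $\sym\pb_{0;\df}=1$, $\sym\pb_{0;\df+1}=-1$, and
\[
\pb_{0;\df}\pb_{0;\df}^*+\pb_{0;\df+1}\pb_{0;\df+1}^*=\pa_{0;\df}\pa_{0;\df}^*,
\]
so if $\pa_{0;\df}$ lacks symmetry I would take the two residual entries to be this symmetric/antisymmetric pair, preserving $\pp_{\pa_0}\pp_{\pa_0}^*=1$ and giving $L=\df+1$; if $\pa_{0;\df}$ already has symmetry no split is needed and $L=\df$. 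In both cases the desired matrix is $\pU:=\diag(\pU_0,I_{L+1-\df})$, a product of paraunitary blocks, hence paraunitary, and $\pp_{\pa_0}\pU$ has symmetry entrywise.

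Finally, for $m=2n$ I would use the second half of Corollary~\ref{cor:maskRemainder}, namely $|\wh{b_0}(\xi)|^2=c_{2n,2n-1}[\sin^2(\xi/2)/\df^2]^{2n-1}$. Writing $\sin^2(\xi/2)/\df^2=\tfrac{1-z}{2\df}\big(\tfrac{1-z}{2\df}\big)^*$ lets one factor this as $\pa_{0;\df}\pa_{0;\df}^*$ with the stated square root $\pa_{0;\df}=\sqrt{c_{2n,2n-1}}\big(\tfrac{2-z-1/z}{4\df^2}\big)^{n-1}\tfrac{1-z}{2\df}$; since $\sym(\tfrac{2-z-1/z}{4\df^2})=1$ and $\sym\tfrac{1-z}{2\df}=-z$, this square root already satisfies $\sym\pa_{0;\df}=-z$ and hence needs no split, giving $L=\df$. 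I expect the one genuinely delicate point to be the symmetry bookkeeping of the middle stage: verifying that the shifts $\kappa_\gamma$ in \eqref{eq:symmetrization} can be chosen to keep $\pU_0$ paraunitary while pairing the subsymbols, and that the residual $\wh{b_0}$-entry can always be rendered symmetric without destroying either $\pp_{\pa_0}\pp_{\pa_0}^*=1$ or paraunitarity; the remaining polyphase and factorization computations are routine.
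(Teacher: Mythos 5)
Your proposal is correct and follows essentially the same route as the paper's own construction (the discussion preceding the theorem): append the polyphase remainder supplied by Corollary~\ref{cor:maskRemainder}, symmetrize the first $\df$ subsymbols via \eqref{eq:symmetrization}, split the real-coefficient residual entry into its symmetric and antisymmetric parts when it lacks symmetry (the paper realizes this same split through a $\tfrac{1}{\sqrt{2}}$-duplication followed by a Hadamard-type block in $\pU$, as in Example~3), and take the explicit square root of $|\wh{b_0}|^2$ when $m=2n$. Your normalization $\tfrac{1-z}{2\df}$ in the last factor is the correct one --- it matches the paper's pre-theorem text, and the $\tfrac{1-z}{2}$ appearing in the theorem's display is evidently a typo.
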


Now,  applying Theorem \ref{thm:main:2}, we have the following
algorithm to construct high-pass filters $\pa_1,\ldots,\pa_{L}$ from
a low-pass filter $a_0$ for a complex pseudo spline of type I with
order $(m,n)$.

\begin{algorithm}
\label{alg:2} Input low-pass  filter $\pa_0$ for a complex pseudo
spline of type I with order $(m,n)$, $1\le 2n-1\le m$. $\pa_0$
satisfies \eqref{mask:sym}.
\begin{itemize}
\item[{\rm(1)}] Construct  $\pp_{\pa_0}(z)$ and $\pU$ as in Theorem \ref{thm:symPa} such
that $\pp:=\pp_{\pa_0}\pU$ is a $1\times (L+1)$ row vector of
Laurent polynomials with symmetry
 ($L=\df$ when $m=2n$ while
$L=\df+1$ when $m\neq 2n$.

\item[{\rm(2)}] Derive $\pP_e$ with all the properties as in Theorem~\ref{thm:main:2} for the case $r=1$ from $\pp$ by
Algorithm~\ref{alg:1}.

\item[{\rm(3)}] Let $\PP:=\pP_e\pU^*=:(\pa_{m;\gamma})_{0\le m\le L,0\le\gamma\le \df-1}$ as  in \eqref{polyphase}. Define high-pass filters
\begin{equation}
\label{highpass:def} \pa_m(z):=\frac{1}{\sqrt{\df}}
\sum_{\gamma=0}^{\df-1}\pa_{m;\gamma}(z^\df)z^\gamma, \qquad m=1,
\ldots, L.
\end{equation}
Note that we only need the first $\df$ columns of $\pP_e\pU^*$.
\end{itemize}
Output a symmetric filter bank $\{ \pa_0, \pa_1, \ldots, \pa_{L}\}$
with the perfect reconstruction property, i.e.
$\PP^*(z)\PP(z)=I_\df$. All filters $\pa_m$, $m=1, \ldots, \df-1$,
have symmetry:
\begin{equation}\label{highpass:sym}
\pa_{m}(z)=\gep_m z^{\df c_m-c_0}\pa_m(1/z),
\end{equation}
where $c^m:=k_m+c_0\in\R$ and all $\varepsilon_m\in\{-1,1\}$,
$k_m\in\Z$ for  $m=1,\ldots,L$ are determined by the symmetry
pattern of $\pP_e$ as follows:
\begin{equation}
\label{sym:Pe} [1,\gep_1z^{k_1},\ldots,\gep_{L}
z^{k_L}]^T\sym\pp:=\sym\pP_e.
\end{equation}

\end{algorithm}

\begin{proof} Rewrite $\pP_e=(\pb_{m;\gamma})_{0\le m,\gamma \le L}$. Since $\pP_e$ has
compatible symmetry as in \eqref{sym:Pe}, we have $\sym
\pb_{m;\gamma}=\varepsilon_m z^k_{m}\sym \pb_{0;\gamma}$ for
$m=1,\ldots,\df-1$. By \eqref{eq:symmetrization}, we have $\sym
\pb_{0;\gamma} = \sgn(Q_\gamma-\gamma)z^{R_\gamma+k_\gamma}$,
$\gamma = 0,\ldots,\df-1$, where $\sgn(x)=1$ for $x\ge0$ and
$\sgn(x)=-1$ for $x<0$. Then, we have
\begin{equation}
\label{eq:symCompatibleB}
\sym\pb_{m;\gamma}=\sgn(Q_\gamma-\gamma)\varepsilon_m
z^{R_\gamma+\kappa_{\gamma}+k_m}, \
\end{equation}
By \eqref{eq:symCompatibleB} and the transformation matrix $\pU^*$
with respect to \eqref{eq:symmetrization}, we deduce that
\begin{equation}\label{eq:compatibleCondition}
\pa_{m;\gamma} =
\varepsilon_mz^{R_\gamma+k_m}\pa_{m;{Q_\gamma}}(z^{-1}).
\end{equation}
This implies that $\sym\pa_m=\varepsilon_m z^{\df(k_m+c_0)-c_0}$,
which is equivalent to \eqref{highpass:sym} with $c_m:=k_m+c_0$ for
$m=1,\ldots,L$.
\end{proof}

Since the high-pass filters $\pa_1,\ldots,\pa_L$ satisfy
\eqref{highpass:sym}, it is easy to verify that
$\psi^1,\ldots,\psi^L$ defined in \eqref{waveletGenerators} also has
the following symmetry:
\begin{equation}\label{sym:psi}
\psi^1(c_1-\cdot)=\varepsilon_1\psi^{1},\quad\psi^2(c_2-\cdot)=\varepsilon_2\psi^{2},\quad
\ldots, \quad\psi^L(c_L-\cdot)=\varepsilon_L\psi^{1},.
\end{equation}
In fact, by \eqref{highpass:sym}, \eqref{waveletGenerators} and the
symmetry of $\phi$, we have
\[
\begin{aligned}
\wh{\psi^m}(-\df \xi) &= \pa_m(e^{\iu\xi})\wh{\phi}(-\xi)=
\gep_m e^{-\iu(\df c_m-c_0)}\pa_m(e^{-\iu\xi})e^{-\iu c_0\xi}\wh{\phi}(\xi)\\
&=\gep_m e^{-\iu (\df
c_m)\xi}\pa_m(e^{-\iu\xi})\wh{\phi}(\xi)=\gep_m e^{-\iu (\df
c_m)\xi}\wh{\psi^m}(\df \xi).
\end{aligned}
\]
This implies $\wh{\psi^m}(-\xi) =\gep_m e^{-\iu
c_m\xi}\wh{\psi^m}(\xi)$, which is equivalent to \eqref{sym:psi}.

In the following, let us present some examples to demonstrate our
results and illustrate our algorithms.
\begin{example}{\rm
\label{ex:1} Consider dilation factor $\df=2$. Let  $m=4, n=2$. Then
$P_{4,3}(y) = 1+4y+4y^2$. The low-pass filter $a_0^{4,2}$  with its
symbol $\pa_0$  for the complex pseudo spline of order $(4,2)$ is
given by
\[
\pa_0(z)
=\left(\frac{1+z}{2}\right)^4\left[\left(-\frac12-\frac{\sqrt{6}}{4}\iu\right)\frac{1}{z}+\left(2+\frac{\sqrt{6}}{2}\iu\right)\frac{1}{z^2}+\left(-\frac12-\frac{\sqrt{6}}{4}\iu\right)\frac{1}{z^3}\right].
\]
Note that $\cs(\pa_0)=[-3,3]$ and $\pa(z)=\pa(z^{-1})$. In this
case, $m=2n$, by Theorem~\ref{thm:symPa}, we can obtain
$\pp_{\pa_0}=[\pa_{0;0}(z),\pa_{0;1}(z),\pa_{0;2}(z)]$ as follows:
\[
\begin{aligned}
\pa_{0;0}(z) &= {-\frac{\sqrt {3}\iu}{48} \left( 3{z}+8\sqrt {6}\iu
-6 +3
z^{-1}\right)};\\
\pa_{0;1}(z) &= -\frac{(\sqrt{2}+\sqrt{3}\iu)}{160}(5z+8\sqrt{6}\iu-26+5z^{-1})(1+z^{-1});\\
\pa_{0;2}(z) &= -\frac{\sqrt{5}}{32}(z-2+z^{-1})(1-z^{-1}).
\end{aligned}
\]
$\pp_{\pa_0}$ is already a $1\times 3$ vector of Laurent polynomials
with symmetry, i.e., $\sym\pp_{\pa_0}=[1,z^{-1},-z^{-1}]$. Applying
Algorithm~\ref{alg:2}, we can obtain its $3\times 3$ extension
matrix $\pP_e^*=[\pp_{\pa_0}^*,\pp_{\pa_1}^*,\pp_{\pa_2}^*]$ with
$\pp_{\pa_1}:=[\pa_{1;0},\pa_{1;1},\pa_{1;2}]$ and
$\pp_{\pa_2}:=[\pa_{2;0},\pa_{2;1},\pa_{2;2}]$ as follows:
\[
\begin{aligned}
\pa_{1;0}(z) &= {\frac{-\sqrt {35}(\sqrt{6}+3\iu)}{1680} \left(
3{z}-8\sqrt {6}\iu +42 +3
z^{-1}\right)};\\
\pa_{1;1}(z) &= {\frac{-\sqrt {35}(2\sqrt{6}+\iu)}{5600} \left(
5{z}-8\sqrt {6}\iu -74 +5
z^{-1}\right)(1+z^{-1})};\\
\pa_{1;2}(z) &= {\frac{-\sqrt {7}(\sqrt{3}-\sqrt{2}\iu)}{1120}
\left( 5{z}-16\sqrt {6}\iu -58 +5 z^{-1}\right)(1-z^{-1})};\\
\pa_{2;0}(z) &= \frac{\sqrt{42}\iu}{56}(z-z^{-1});\\
\pa_{2;1}(z) &= {\frac{\sqrt {7}(2+\sqrt{6}\iu)}{560} \left(
5{z}-4\sqrt {6}\iu -22 +5
z^{-1}\right)(1-z^{-1})};\\
\pa_{2;2}(z) &={\frac{\sqrt {70}(2\sqrt{6}+\iu)}{560} \left(
5{z}-12\sqrt {6}\iu -26 +5 z^{-1}\right)(1+z^{-1})}.
\end{aligned}
\]
Note that the symmetry of $\pP_e$ satisfies
$\sym\pP_e=[1,1,-1]^T\sym\pp_{\pa_0}$ and the coefficient support of
$\pP_e$ satisfies $\cs([\pP_e]_{:,j})\subseteq\cs([\pp_{\pa_0}]_j)$
for $j=1,2,3$.
 The high-pass filters $\pa_1,\pa_2$ constructed from $\pp_{\pa_1}$
and $\pp_{\pa_2}$ via \eqref{highpass:def} are then given by:
\[
\begin{aligned}
 \pa_1(z)
 &=-\frac{\sqrt{35}(4\sqrt{3}+\sqrt{2}\iu)}{11200}(5z+26+2\sqrt{6}\iu+5z^{-1})(z-2+z^{-1})^2;\\
 \pa_2(z)&
 =\frac{\sqrt{7}(\sqrt{2}+\sqrt{3}\iu)}{560}(5z+16+2\sqrt{6}\iu+5z^{-1})(z-2+z^{-1})(z-z^{-1}).
\end{aligned}
\]
We have $\pa_1(z)=\pa_1(z^{-1})$ and $\pa_2(z)=-\pa_2(z^{-1})$. Let
$\phi$ be the $2$-refinable function associated with the low-pass
filter $\pa_0$. Let $\psi^1,\psi^2$ be the wavelet functions
associated with the high-pass filters $\pa_1,\pa_2$ respectively by
\eqref{waveletGenerators}. Then $\phi(-\cdot)=\phi$,
$\psi^1(-\cdot)=\psi^1$ and $\psi^2(-\cdot)=-\psi^2$. The graphs of
$\phi,\psi^1,\psi^2$ are as follows:
\begin{figure}[th]
\centerline{\scalebox{1.0}{
\hbox{\epsfig{file=./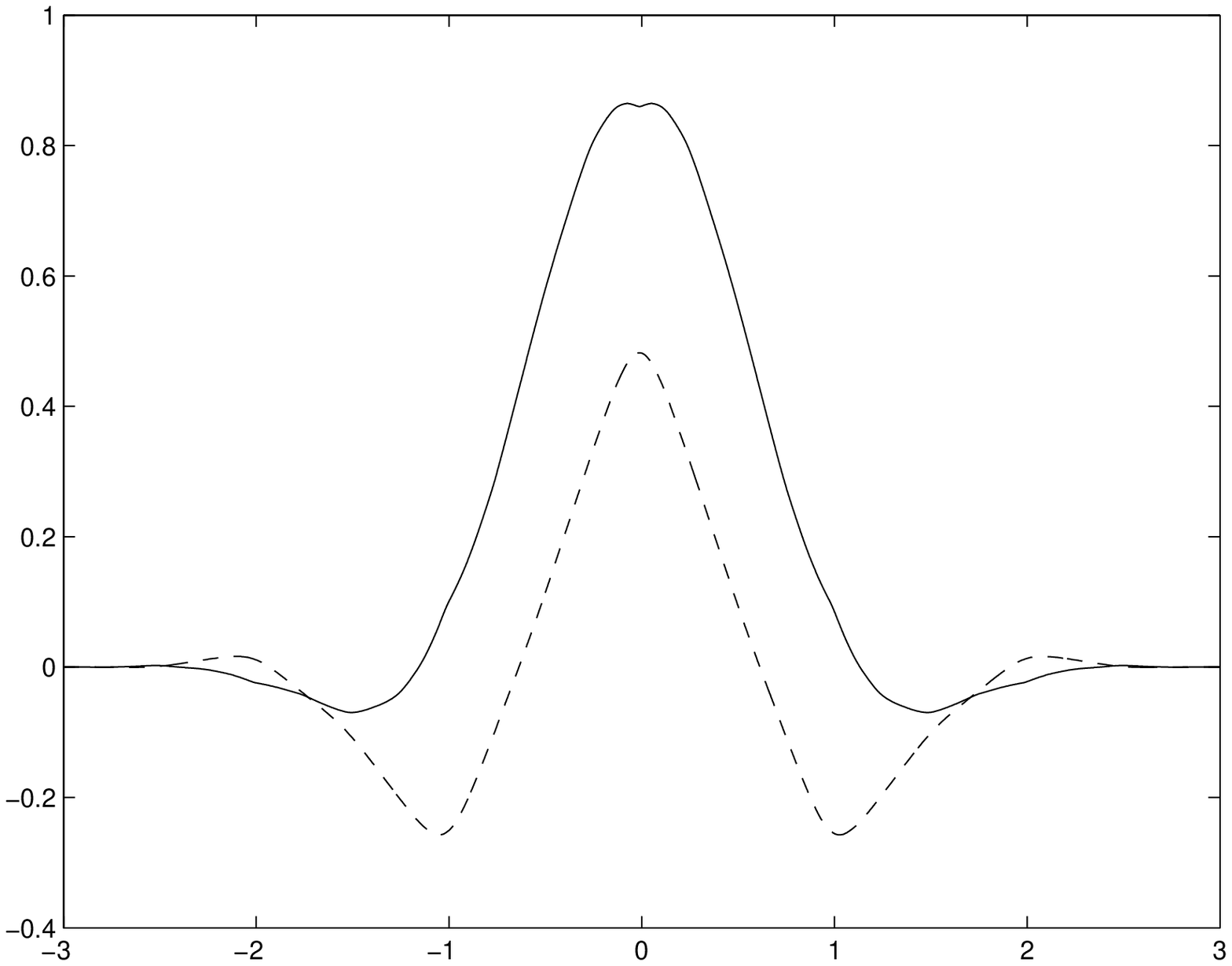,width=1.2in,height=1.0in}
\epsfig{file=./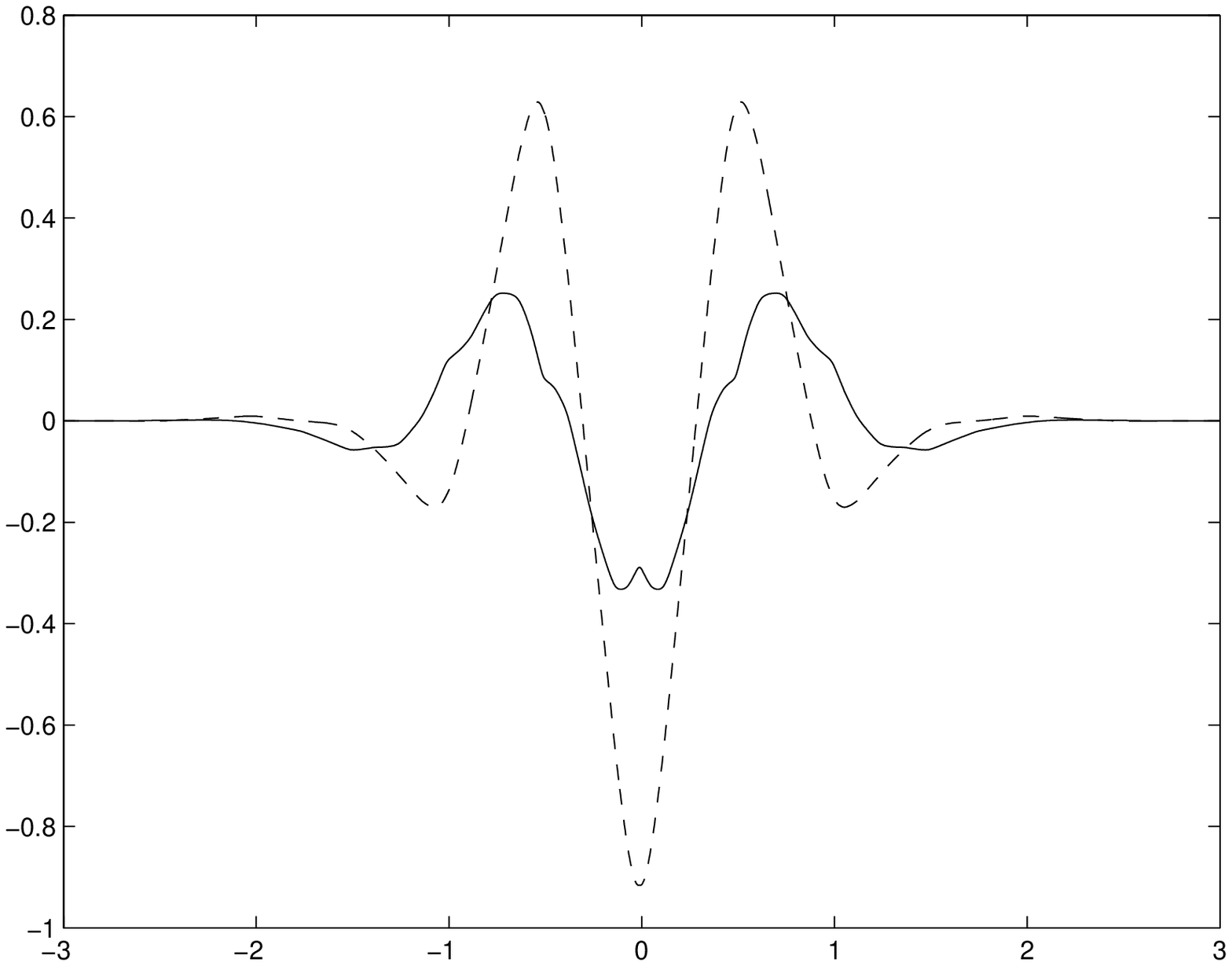,width=1.2in,height=1.0in}
\epsfig{file=./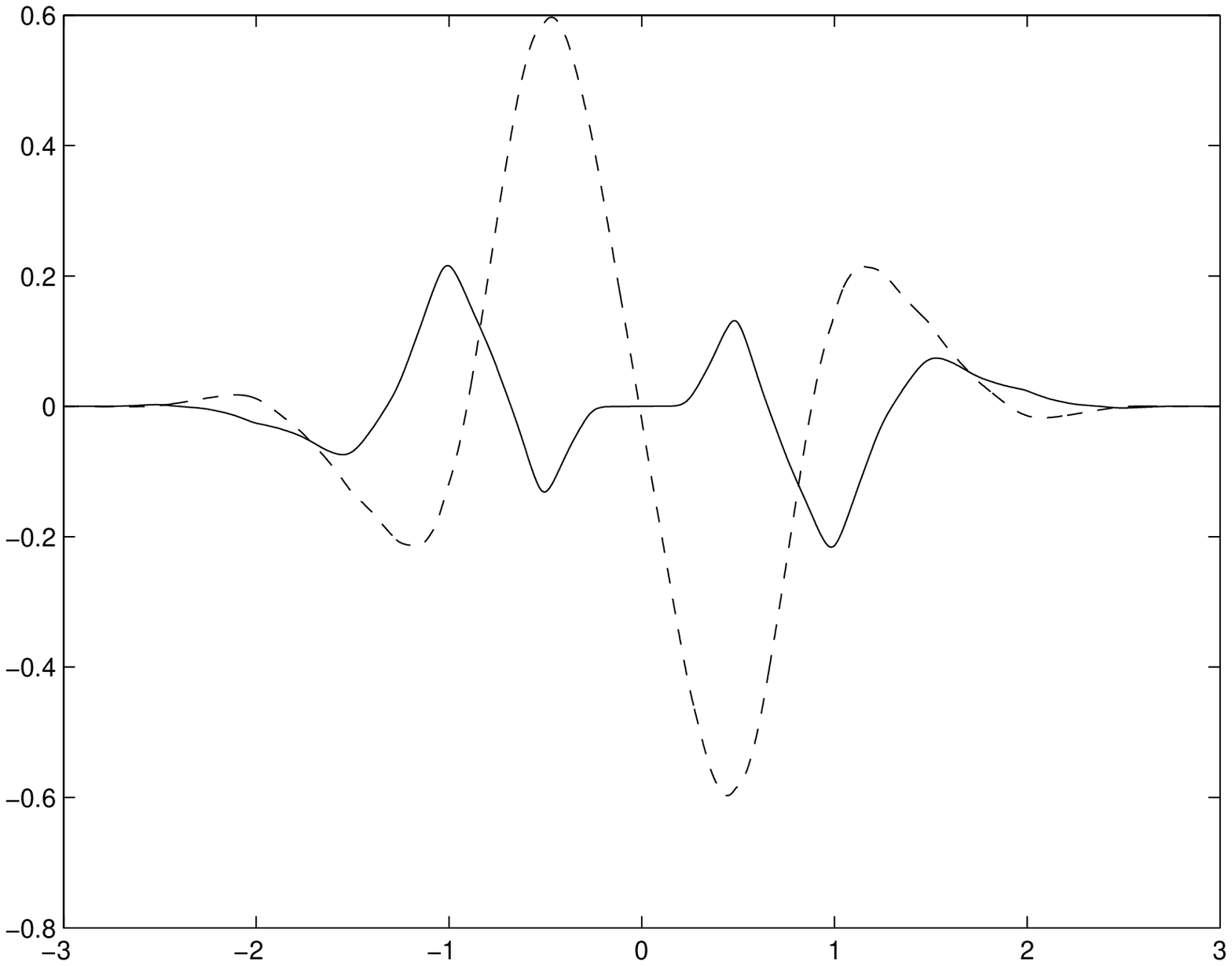,width=1.2in,height=1.0in} }}}
\begin{caption}
{ The graphs of $\phi,\psi^1,\psi^2$ (left to right). Real
part:solid line. Imaginary part:dashed line.}
\end{caption}
\end{figure}
}
\end{example}

\begin{example}{\rm
\label{ex:2} Consider dilation factor $\df=3$. Let  $m=4, n=2$. Then
$P_{4,3}(y) =1+\frac{32}{3}y+64y^2)$. The low-pass filter
$a_0^{4,2}$ with its symbol $\pa_0$  for the complex pseudo spline
of order $(4,2)$ is given by
\[
\pa_0(z)
=\left(\frac{\frac1z+1+z}{3}\right)^4\left[\left(-\frac43-\frac{2\sqrt{5}}{3}\iu\right)\frac{1}{z}+\left(\frac{11}{3}+\frac{4\sqrt{5}}{3}\iu\right)+\left(-\frac43-\frac{2\sqrt{5}}{3}\iu\right)z\right].
\]
Note that $\cs(\pa_0)=[-5,5]$ and $\pa(z)=\pa(z^{-1})$. In this
case, $m=2n$, by Theorem~\ref{thm:symPa}, we can obtain
$\pp_{\pa_0}=[\pa_{0;0}(z),\pa_{0;1}(z),\pa_{0;2}(z),\pa_{0;3}(z)]$
as follows:
\[
\begin{aligned}
\pa_{0;0}(z) &= {-\frac{\sqrt {15}\iu}{405} \left( 10{z}+27\sqrt
{5}\iu -20 +10
z^{-1}\right)};\\
\pa_{0;1}(z) &= \frac{\sqrt{3}}{243}(-(4+2\sqrt{5}\iu)  z^{-2}+30z^{-1}+60+6\sqrt{5}\iu-(5+4\sqrt{5}\iu)z);\\
\pa_{0;2}(z) &=
\frac{\sqrt{3}}{243}(-(5+4\sqrt{5}\iu)z^{-2}+(60+6\sqrt{5}\iu)z^{-1}+30-(4+2\sqrt{5}\iu)
z)\\
\pa_{0;3}(z) & = -\frac{2\sqrt{10}}{81}(z-2+z^{-1})(1-z).
\end{aligned}
\]
Note that $\pa_{0;1}(z) = z^{-1}\pa_{0;2}(z^{-1})$. Let $\pp =
\pp_{\pa_0} \pU$ with $\pU$ being the paraunitary matrix given by
\[
\pU:=\left[
       \begin{array}{cccc}
         1 & 0 & 0 & 0 \\
         0 & \frac{1}{\sqrt{2}} & \frac{1}{\sqrt{2}} & 0 \\
         0 & \frac{1}{\sqrt{2}} & -\frac{1}{\sqrt{2}} & 0 \\
         0 & 0 & 0 & z^{-1} \\
       \end{array}
     \right].
\]
Then $\pp$ is  a $1\times 4$ vector of Laurent polynomials with
symmetry pattern satisfying $\sym\pp=[1,z^{-1},-z^{-1},-z^{-1}]$.
Applying Algorithm~\ref{alg:2}, we can obtain a $4\times 4$
extension matrix
$\pP_e^*=[\pp_{\pa_0}^*,\pp_{\pa_1}^*,\pp_{\pa_2}^*,\pp_{\pa_3}]$
with $\pp_{\pa_1}:=[\pa_{1;0},\pa_{1;1},\pa_{1;2},\pa_{1;3}]$,
$\pp_{\pa_2}:=[\pa_{2;0},\pa_{2;1},\pa_{2;2},\pa_{2;3}]$, and
$\pp_{\pa_3}:=[\pa_{3;0},\pa_{3;1},\pa_{3;2},\pa_{3;3}]$.
The coefficient support of $\pP_e$ satisfies
$\cs([\pP_e]_{:,j})\subseteq\cs([\pp_{\pa_0}]_j)$ for $j=1,2,3,4$.
 The high-pass filters $\pa_1,\pa_2,\pa_3$ constructed from $\pp_{\pa_1}$
, $\pp_{\pa_2}$, and $\pp_{\pa_3}$ via \eqref{highpass:def} are then
given by:
\[
\begin{aligned}
 \pa_1(z)
 &=\frac{\sqrt{19178}}{4660254}(b_1(z)+b_1(z^{-1}));
\\
 \pa_2(z)& =\frac{\sqrt{218094}}{17665614}(b_2(z)-b_2(z^{-1}));
\\
 \pa_3(z)& =\frac{2\sqrt{1338}}{54189}(b_3(z)-z^3b_3(z^{-1})).
\end{aligned}
\]
where
\[
\begin{aligned}
b_1(z)&=\left( -172-86\,i\sqrt {5} \right) {z}^{5}+ \left(
-215-172\,i\sqrt {5} \right) {z}^{4}-258\,i\sqrt {5}{z}^{3}
\\& +\left( 1470+1224\,i\sqrt {5 } \right) {z}^{2}+ \left(
1860+2328\,i\sqrt {5} \right) z-3036\,i \sqrt {5}-2943\\
b_2(z)& = \left( -652-326\,i\sqrt {5} \right) {z}^{5}+ \left(
-815-652\,i\sqrt {5} \right) {z}^{4}-978\,i\sqrt {5}{z}^{3}
\\&+
\left( 1832\,i\sqrt {5}+ 1750 \right) {z}^{2}+ \left( 3508\,i\sqrt
{5}+3020 \right) z\\
b_3(z)& = \left( 4\,\sqrt {5}+10\,i \right) {z}^{5}+ \left( 5\,\sqrt
{5}+20\,i \right) {z}^{4}+30\,i{z}^{3}+ \left( -53\,\sqrt {5}-260\,i
\right) {z }^{2}.
\end{aligned}
\]
We have $\pa_1(z)=\pa_1(z^{-1})$, $\pa_2(z)=-\pa_2(z^{-1})$, and
$\pa_3(z)=-z^3\pa_3(z^{-1})$. Let $\phi$ be the $3$-refinable
function associated with the low-pass filter $\pa_0$. Let
$\psi^1,\psi^2,\psi^3$ be the wavelet functions associated with the
high-pass filters $\pa_1,\pa_2,\pa_3$ respectively by
\eqref{waveletGenerators}. Then $\phi(-\cdot)=\phi$,
$\psi^1(-\cdot)=\psi^1$, $\psi^2(-\cdot)=-\psi^2$, and
$\psi^3(1-\cdot)=-\psi^3$ . See Figure~2 for the graphs of
$\phi,\psi^1,\psi^2$, and $\psi^3$.
\begin{figure}[th]
\centerline{\scalebox{1.0}{ \hbox{
\epsfig{file=./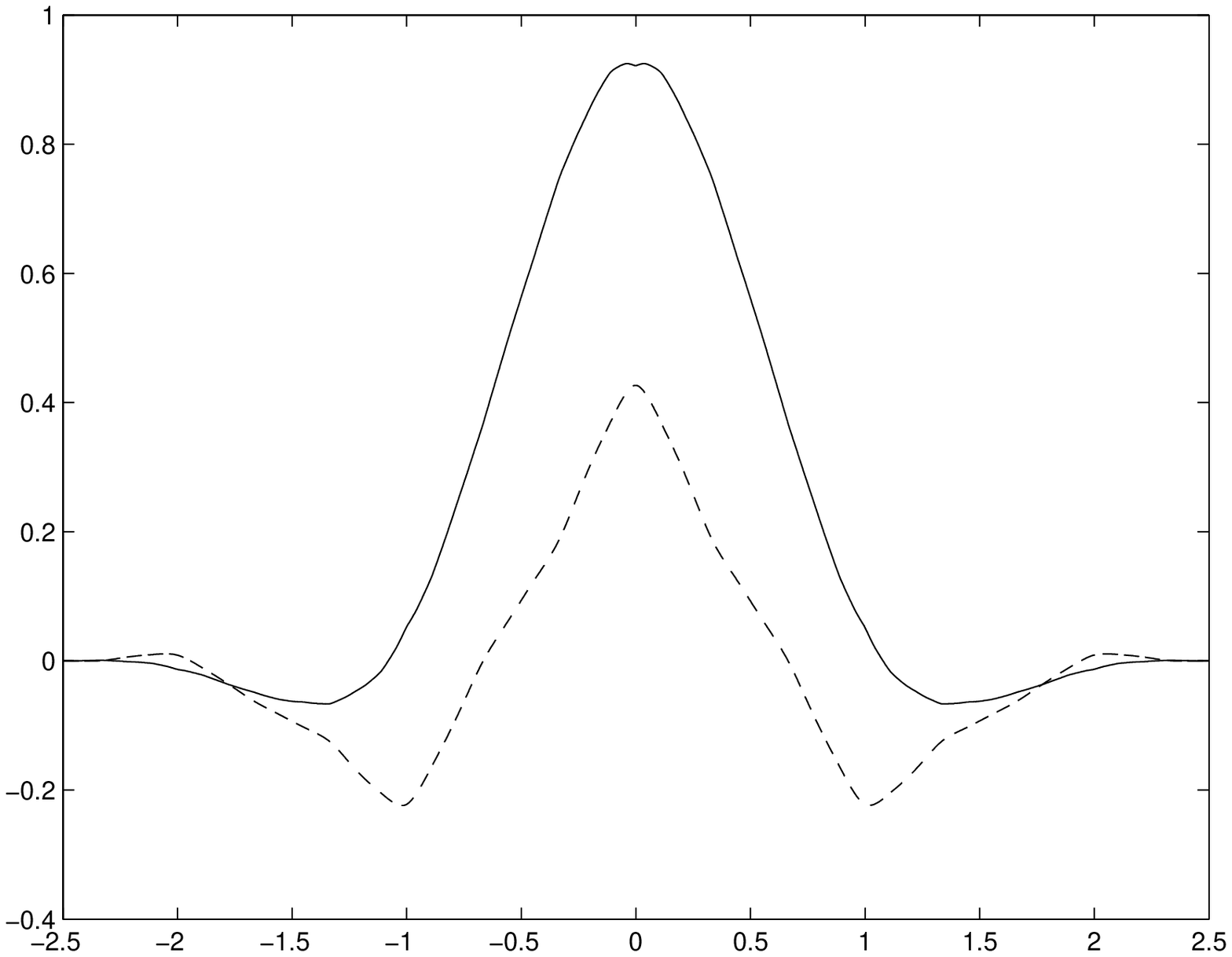,width=1.1in,height=1.0in}
\epsfig{file=./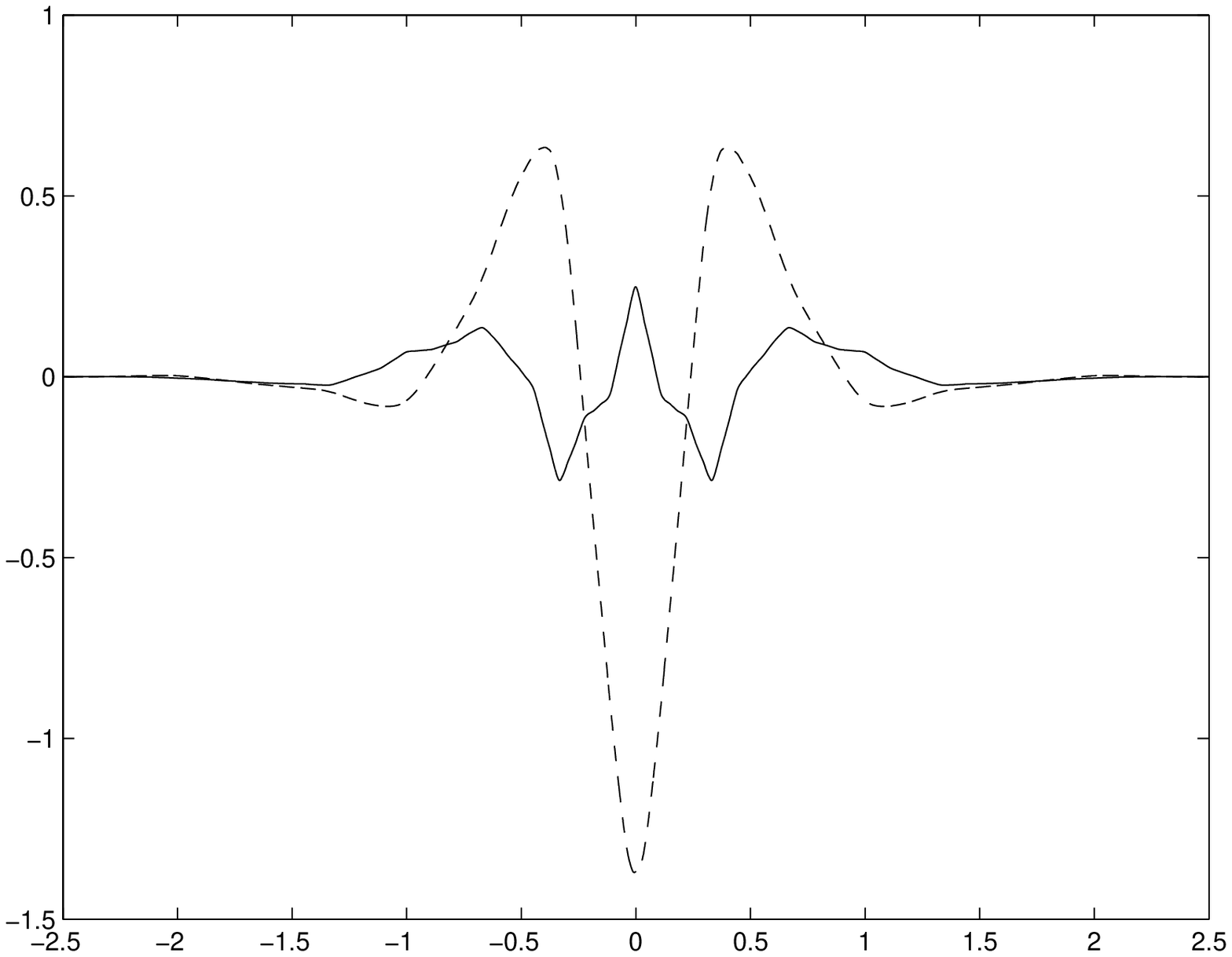,width=1.1in,height=1.0in}
\epsfig{file=./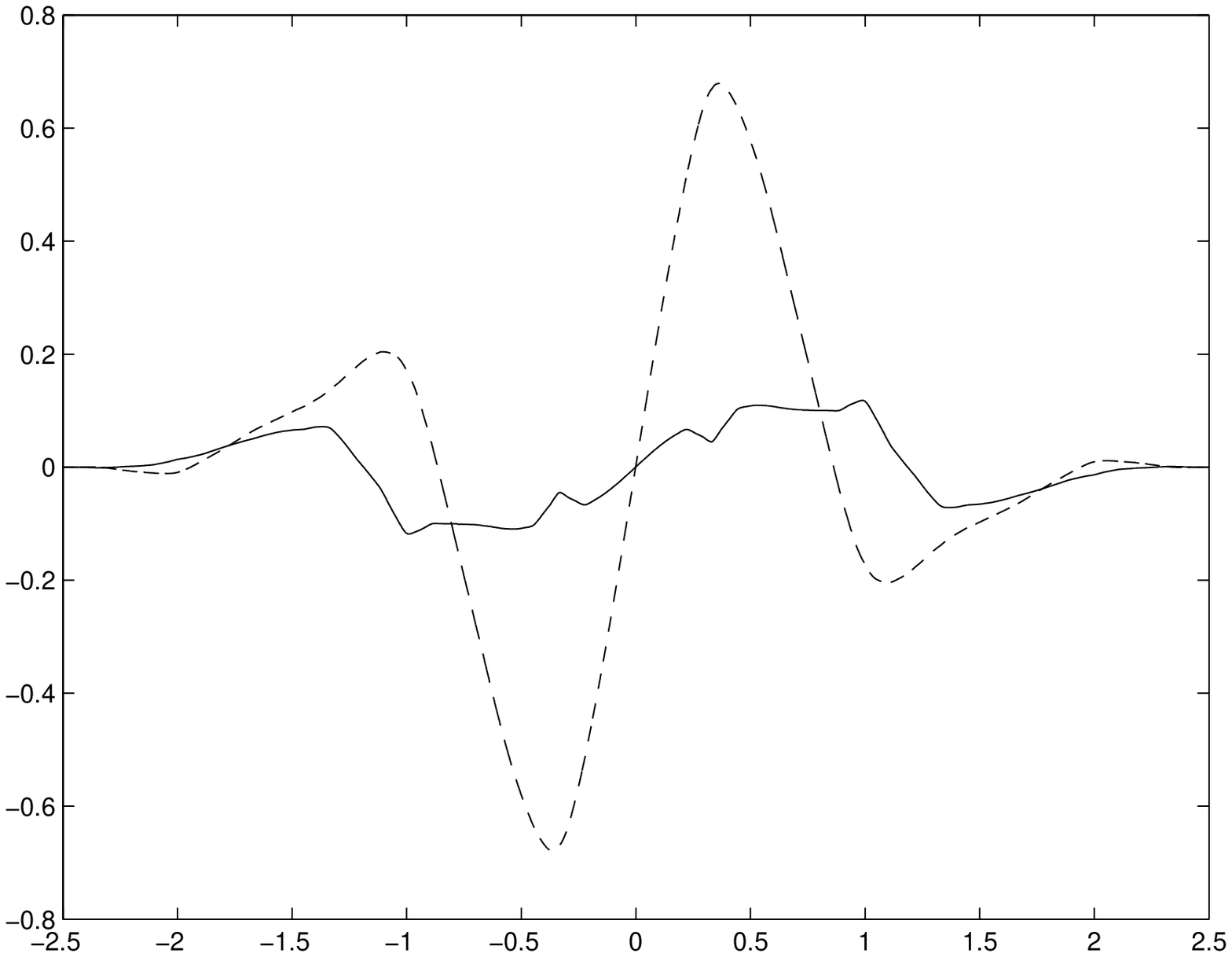,width=1.1in,height=1.0in}
\epsfig{file=./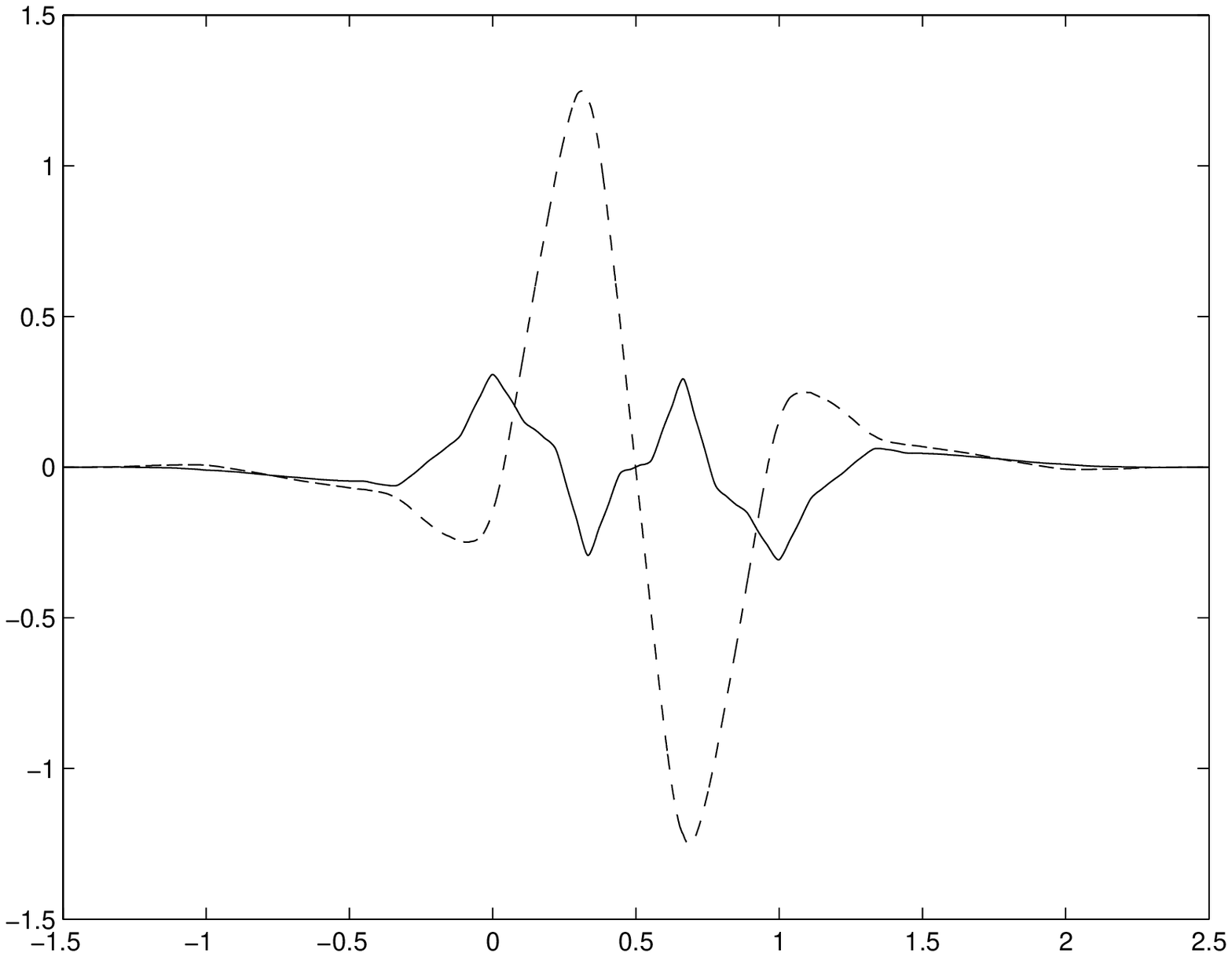,width=1.1in,height=1.0in} }}}
\begin{caption}
{ The graphs of $\phi,\psi^1,\psi^2,\psi^3$ (left to right). Real
part:solid line. Imaginary part:dashed line.}
\end{caption}
\end{figure}
}
\end{example}

\begin{example}{\rm
\label{ex:3} Consider dilation factor $\df=3$. Let  $m=5, n=2$. Then
$P_{5,3}(y) = 1+\frac{40}{3}y+\frac{880}{9}y^2)$. The low-pass
filter $a_0^{5,2}$ with its symbol $\pa_0^{5,2}$  for the complex
pseudo spline of order $(5,2)$ is given by
\[
\pa_0(z)
=\left(\frac{\frac1z+1+z}{3}\right)^5\left[\left(-\frac53-\frac{\sqrt{30}}{3}\iu\right)\frac{1}{z}+\left(\frac{13}{3}+\frac{2\sqrt{30}}{3}\iu\right)+\left(-\frac53-\frac{\sqrt{30}}{3}\iu\right)z\right].
\]
In this case, $m=2n-1$, we have
\[
\begin{aligned}
1-\sum_{\gamma=0}^2\pa_{0;\gamma}(z)^*\pa_{0;\gamma}(z)
&=\frac{1}{19683}(1-z)(1-z^{-1})(4z^3+106z^2-541z
\\&\quad+2320-541z^{-1}+106z^{-2}+4z^{-3}), \quad z=e^{-\iu\xi}.
\end{aligned}
\]
By Riesz lemma, we can factorize the above positive trigonometric
function and obtain $\pa_{0;\df}(z)=c_0(z-r_1)(z^2-r_2 z+r_3)(1-z)$
with $r_1=c_1+\sqrt{c_1^2-1}$, $r_2=c_2+\overline{c_2}$ and
$r_3=|c_2|^2$, where $c_1\in\R, c_2,\overline{c_2}\in\C$ are the
roots for the polynomial $16y^3+212y^2-553y+1054$ and
$c_0=\frac{2\sqrt{3}}{243\sqrt{-r_1r_3}}\in\R$.

Let
$\pp_{\pa_0}=[\pa_{0;0}(z),\pa_{0;1}(z),\pa_{0;2}(z),z^{-2}\frac{1}{\sqrt{2}}\pa_{0;3}(z),z^{-2}\frac{1}{\sqrt{2}}\pa_{0;3}(z^{-1})]$.
Then $\pp_{\pa_0}\pp_{\pa_0}^*=1$. Note that $\pa_{0;1}(z) =z^{-1}
\pa_{0;2}(z^{-1})$. Let $\pp = \pp_{\pa_0} \pU$ with $\pU$ being the
paraunitary matrix given by
\[
\pU:=\left[
       \begin{array}{ccccc}
         1 & 0 & 0 & 0 &0 \\
         0 & \frac{1}{\sqrt{2}} & \frac{1}{\sqrt{2}} & 0& 0 \\
         0 & \frac{1}{\sqrt{2}} & -\frac{1}{\sqrt{2}} & 0 & 0\\
         0&   0&   0 & \frac{1}{\sqrt{2}} & \frac{1}{\sqrt{2}} \\
          0&   0& 0 & \frac{1}{\sqrt{2}} & -\frac{1}{\sqrt{2}}\\
       \end{array}
     \right].
\]
Then $\pp$ is  a $1\times 5$ vector of Laurent polynomials with
symmetry pattern satisfying $\sym\pp=[1,z^{-1},-z^{-1},1,-1]$.
Applying Algorithm~\ref{alg:2}, we can obtain a $5\times 5$
extension matrix $\pP_e$. The coefficient support of $\pP_e$
satisfies $\cs([\pP_e]_{:,j})\subseteq\cs([\pp_{\pa_0}]_j)$ for
$j=1,\ldots,5$. From $\pP_e$, we can derive high-pass filters
$\pa_1,\ldots,\pa_4$  via \eqref{highpass:def} as follows:
\[
\begin{aligned}
 \pa_1(z) &=b_1(z)+b_1(z^{-1}); & \pa_2(z)& =b_2(z)-b_2(z^{-1});
\\
\pa_3(z)& =b_3(z)+z^3b_3(z^{-1});& \pa_4(z)& =b_4(z)-z^3b_4(z^{-1});
\end{aligned}
\]
where
\[
\begin{aligned}
b_1(z)&\approx- 0.38736+ 0.26298\iu- \left(  0.00307+ 0.00204\iu
\right) {z}^{6} - \left(  0.00744+ 0.00660\iu \right) {z}^{5} \\&-
\left(
 0.00951+ 0.01459\iu \right) {z}^{4}+ \left(  0.01372-
 0.02716\iu \right) {z}^{3}
 \\&+ \left(  0.06342- 0.03967\iu
 \right) {z}^{2}+ \left(  0.13657- 0.04144\iu \right) z;
 \\
b_2(z)& \approx\left(  0.00601+ 0.00672\iu \right) {z}^{6}+ \left(
0.01353+ 0.02017\iu \right) {z}^{5} + \left(
 0.01353+ 0.04034\iu \right) {z}^{4}\\&- \left(  0.01370-
 0.03024\iu \right) {z}^{3}
 \\&- \left(  0.07035+ 0.00011\iu
 \right) {z}^{2}- \left(  0.14727+ 0.03064\iu \right) z;
\\
b_3(z)& \approx- \left(  0.01063+ 0.01189\iu \right) {z}^{6}- \left(
0.02392+ 0.03567\iu \right) {z}^{5}- \left(
 0.02392+ 0.07133\iu \right) {z}^{4}
 \\&+ \left(  0.05891-
 0.08433\iu \right) {z}^{3}- \left(  0.00043- 0.20320\iu
 \right) {z}^{2};
\\
b_4(z)&\approx\left(  0.00417+ 0.00467\iu \right) {z}^{6}+ \left(
0.00939+ 0.01400\iu \right) {z}^{5}+ \left(
 0.00939+ 0.02800\iu \right) {z}^{4}
 \\&- \left(  0.02126-
 0.02655\iu \right) {z}^{3}- \left(  0.23403+ 0.31140\iu
 \right) {z}^{2}.
\end{aligned}
\]
The coefficients of $b_1(z),\ldots,b_4(z)$ are rounded from exact
explicit solutions (too long to be presented here). We have
$\pa_1(z)=\pa_1(z^{-1})$, $\pa_2(z)=-\pa_2(1/z)$,
$\pa_3(z)=\pa_3(1/z)$, and $\pa_4(z)=-z^3\pa_4(z^{-1})$. Let $\phi$
be the $3$-refinable function associated with the low-pass filter
$\pa_0$. Let $\psi^1,\ldots,\psi^4$ be the wavelet functions
associated with the high-pass filters $\pa_1,\ldots,\pa_4$
respectively by \eqref{waveletGenerators}. Then $\phi(-\cdot)=\phi$,
$\psi^1(-\cdot)=\psi^1$, $\psi^2(-\cdot)=-\psi^2$,
$\psi^3(1-\cdot)=\psi^3$, and $\psi^4(1-\cdot)=-\psi^4$ . See
Figure~3 for the graphs of $\phi,\psi^1,\psi^2,\psi^3$ and $\psi^4$.
\begin{figure}[th]
\centerline{\scalebox{1.0}{ \hbox{
\epsfig{file=./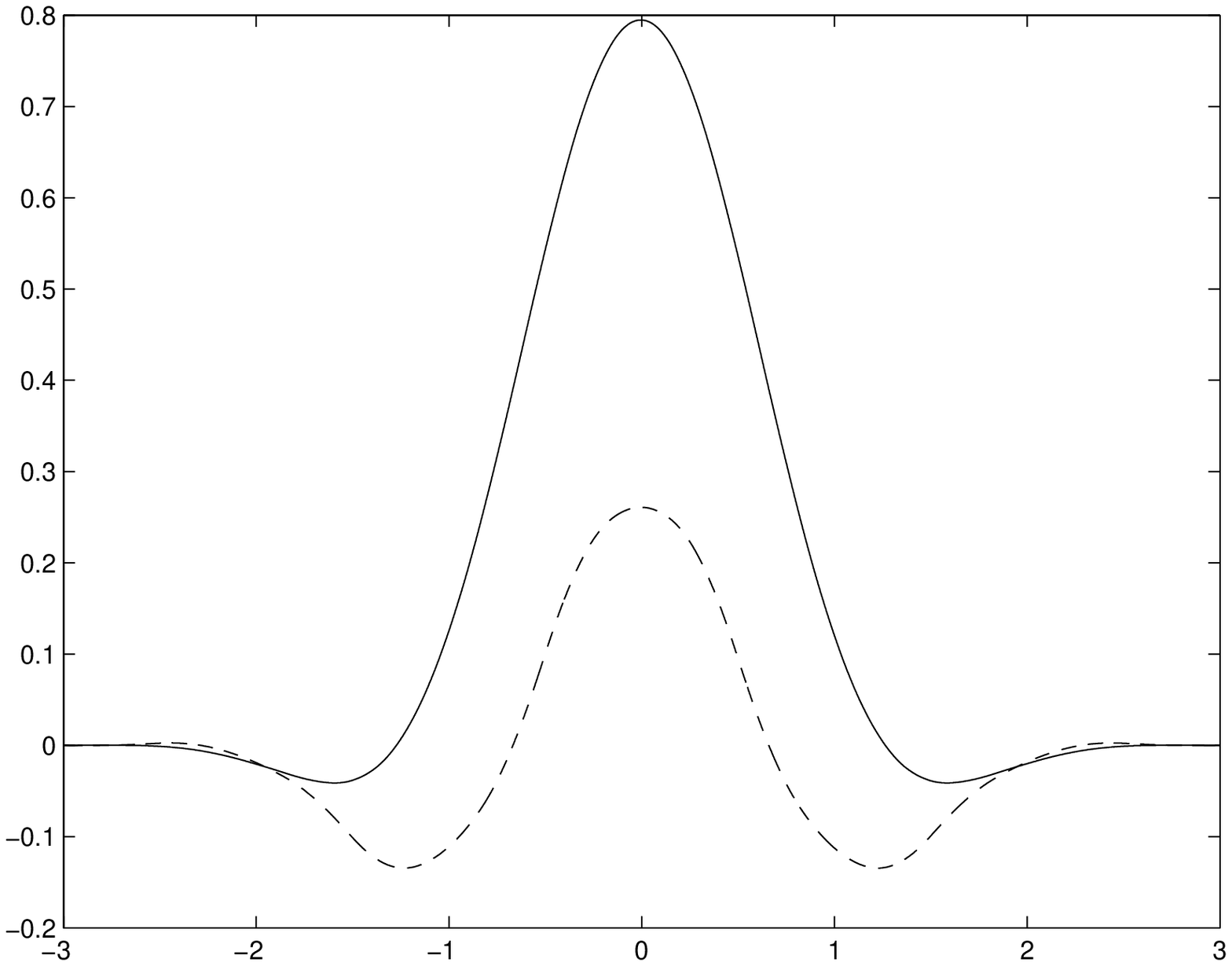,width=1.1in,height=1.0in}
\epsfig{file=./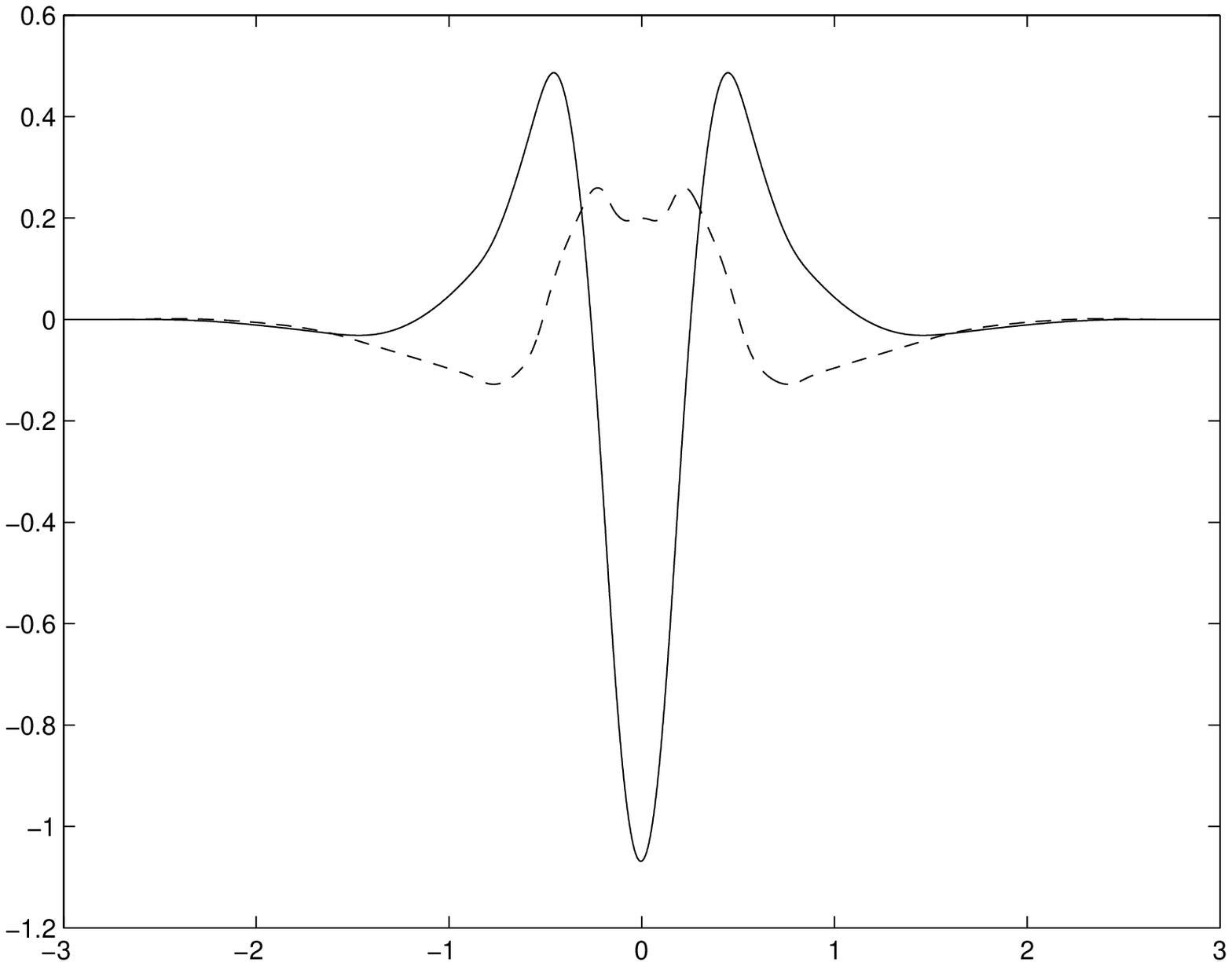,width=1.1in,height=1.0in}
\epsfig{file=./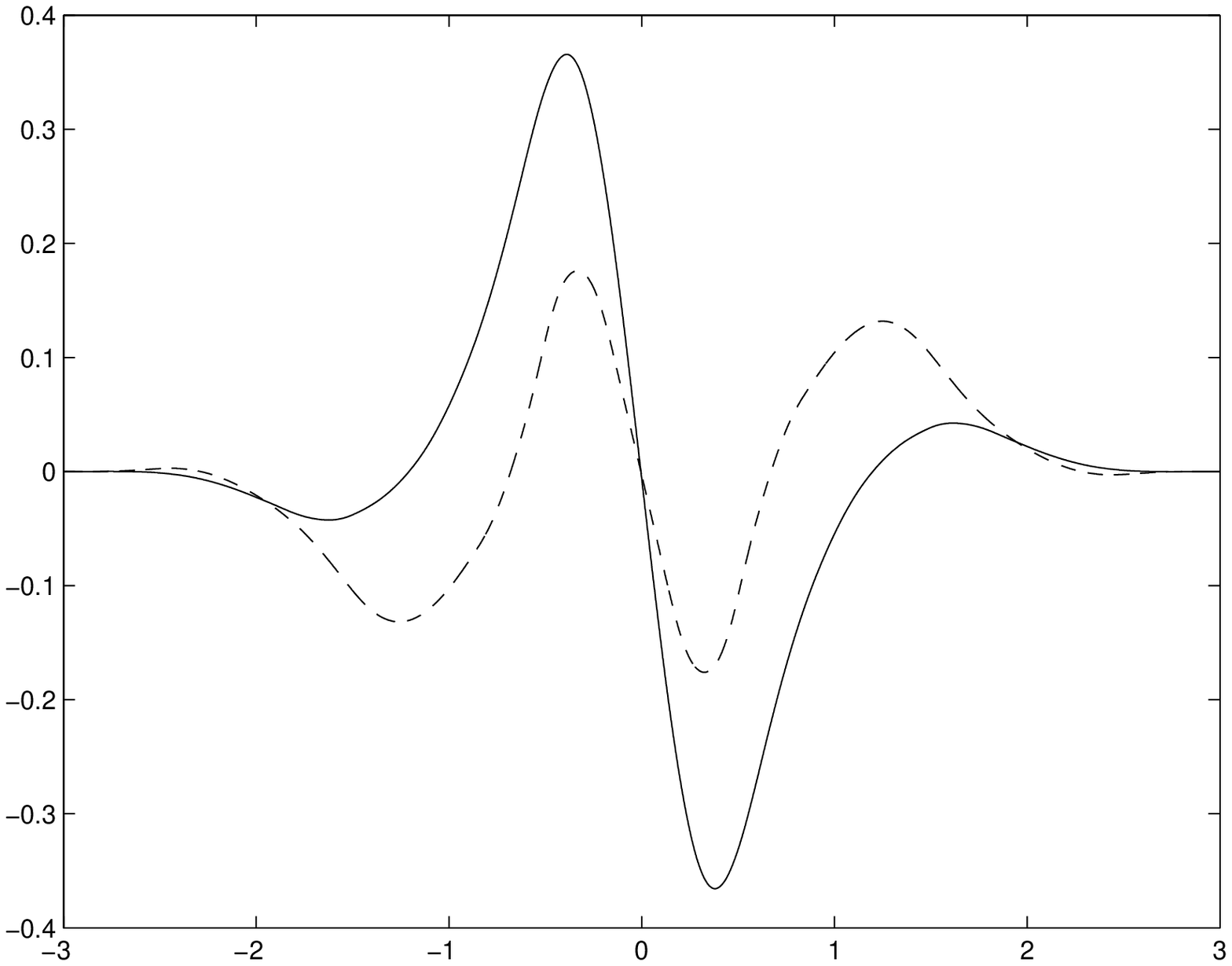,width=1.1in,height=1.0in} } }}
\centerline{\scalebox{1.0}{ \hbox{
\epsfig{file=./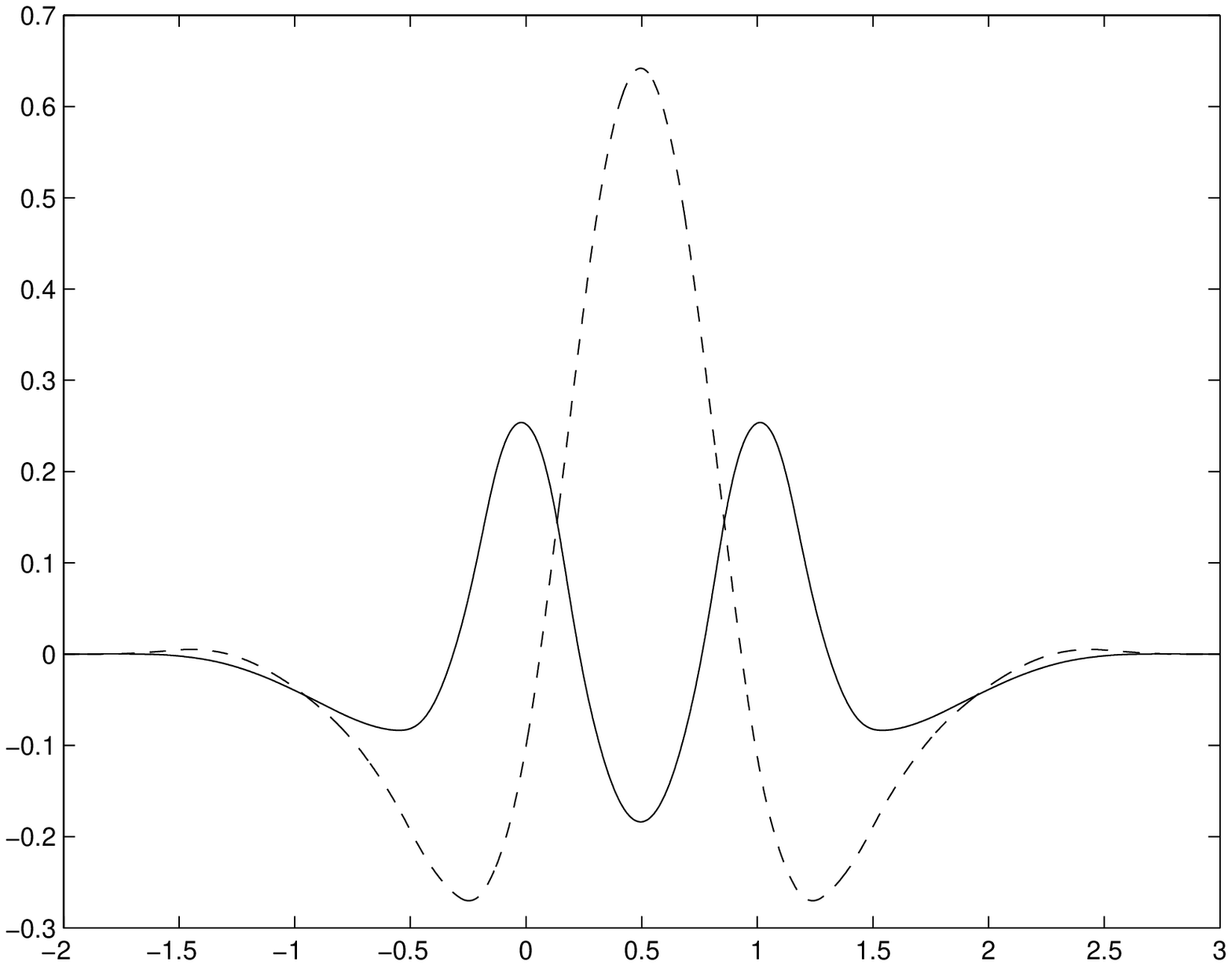,width=1.1in,height=1.0in}
\epsfig{file=./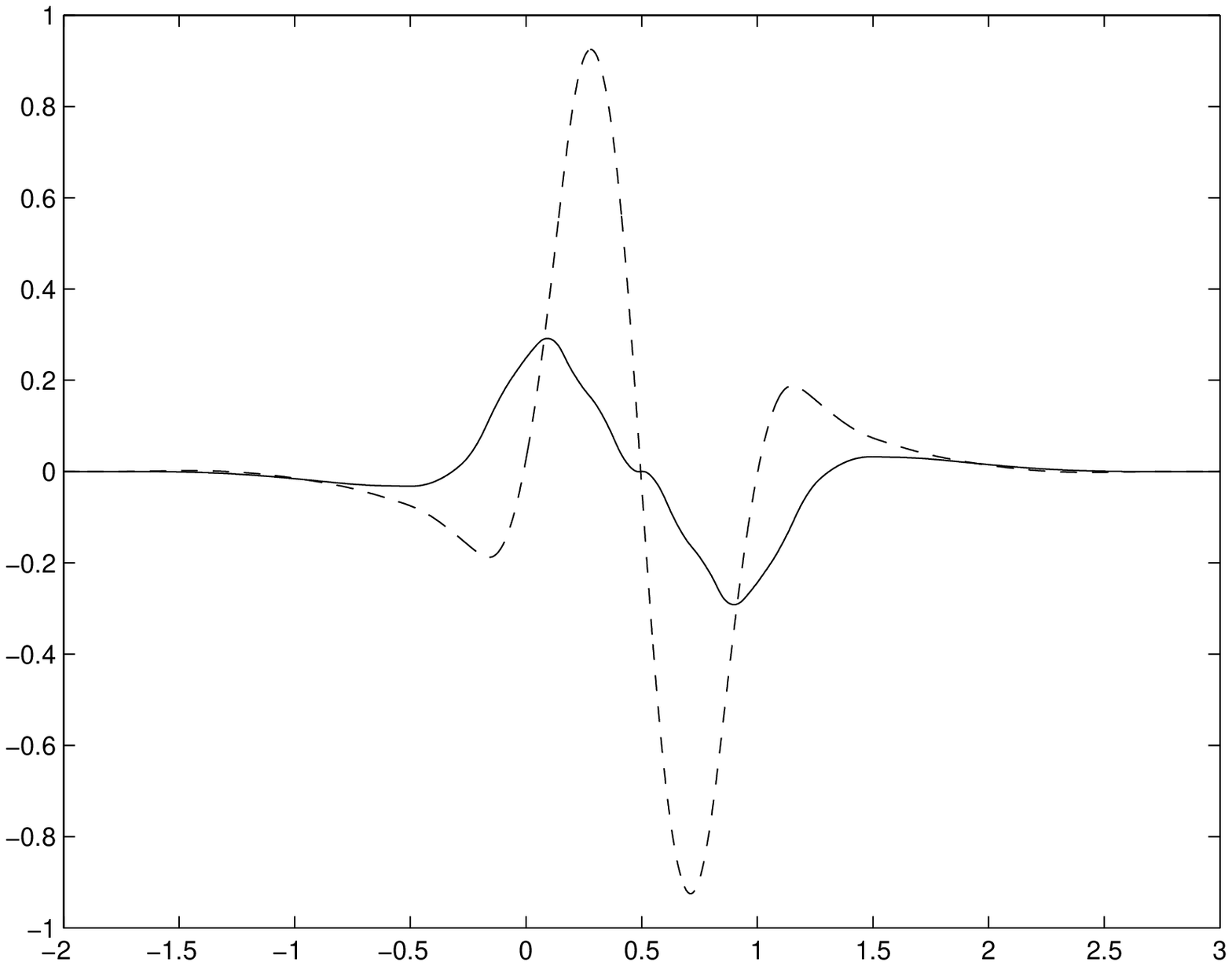,width=1.1in,height=1.0in} } }}
\begin{caption}
{ The graphs of $\phi,\psi^1,\psi^2,\psi^3,\psi^4$ (left to right,
top to bottom). Real part:solid line. Imaginary part:dashed line.}
\end{caption}
\end{figure}

}
\end{example}

\begin{example}{\rm
\label{ex:4} Consider dilation factor $\df=3$. Let  $m=4, n=2$, and
the low-pass filter $a_0^{4,2}$ as in Example~2 with its symbol
$\pa_0$ for the complex pseudo spline $\phi$ of order $(4,2)$ is
given by
\[
\pa_0(z)
=\left(\frac{\frac1z+1+z}{3}\right)^4\left[\left(-\frac43-\frac{2\sqrt{5}}{3}\iu\right)\frac{1}{z}+\left(\frac{11}{3}+\frac{4\sqrt{5}}{3}\iu\right)+\left(-\frac43-\frac{2\sqrt{5}}{3}\iu\right)z\right].
\]
By Theorem~\ref{thm:lin.ind.pseduo.spline}, the shifts of $\phi$ are
linearly independent. Hence, there exist compactly supported  dual
refinable functions in $\wt\phi\in L_2(\R)$  for $\phi$, i.e.,
$\wh{\wt\phi}(\df\xi)=\wt a_0(\xi)\wh{\wt\phi}(\xi)$   for some
low-pass filter $\wt a_0$ and $\langle \phi,\wt\phi(\cdot-k)\rangle
= \delta_k$, $k\in\Z$. Here, we provide a low-pass filter
$\wt\pa_0(z)$ for $\wt\phi$ as follows:
\[
\wt\pa_0(z)=\left(\frac{\frac1z+1+z}{3}\right)^8\left(b(z)+b(\frac1z)\right),
\]
where
\[
\begin{aligned}
b(z)&={\frac {329387}{2754}}+{ \frac {209689}{1377}}\iu\sqrt {5}-
\left({\frac {102661}{816}}+{\frac {5464379}{22032}}\iu\sqrt {5}
\right) z
\\&- \left({\frac {177727}{2754}}-{\frac
{551620}{4131}}\iu \sqrt {5} \right) {z}^{2} + \left( {\frac
{2967467}{22032}}-{\frac { 1034833}{22032}}\iu\sqrt {5} \right)
{z}^{3}
\\&
+ \left( -{\frac {375253}{ 4131}}+{\frac {158555}{15147}}\iu\sqrt
{5} \right) {z}^{4} + \left( { \frac {24620753}{727056}}-{\frac
{29059}{22032}}\iu\sqrt {5} \right) { z}^{5} \\&-{\frac
{24103}{3366}}{z}^{6}+ \left( {\frac {11}{16}}+{\frac
{21391}{727056}}\iu\sqrt {5} \right) {z}^{7}.
\end{aligned}
\]
See Figure~4 for the graph of the $3$-refinable function $\wt\phi$
associated with the low-pass filter $\wt \pa_0(z)$. Note that
$\wt\phi=\wt\phi(-\cdot)$.
\begin{figure}[th]
\centerline{\scalebox{1.0}{ \hbox{
\epsfig{file=./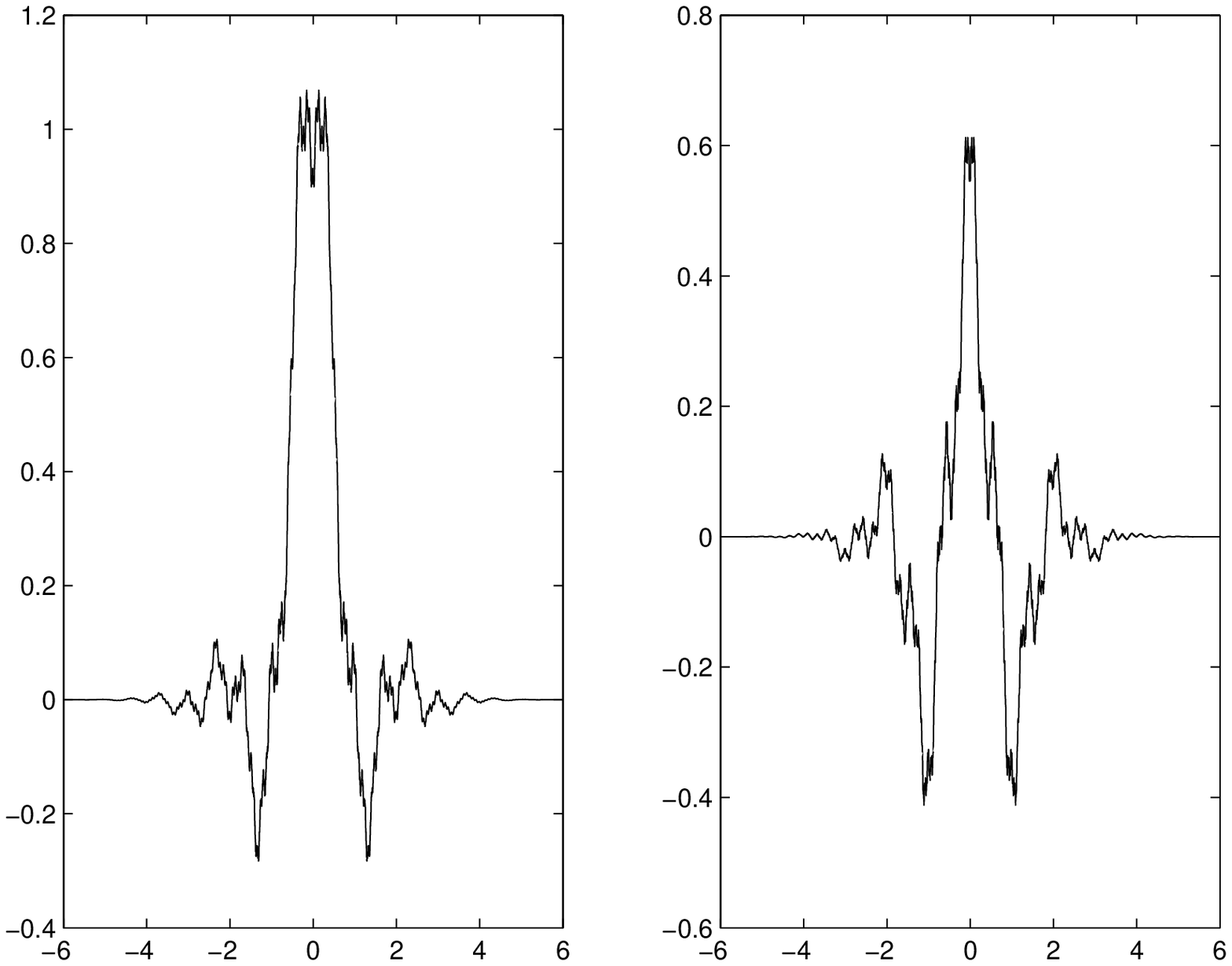,width=3.5in,height=1.3in} }}}
\begin{caption}
{ The graph of $\wt\phi$. Real part:left. Imaginary part:right.}
\end{caption}
\end{figure}
}
\end{example}

\section{Conclusions}

\end{document}